\numberwithin{equation}{section}
\newtheorem{theorem}{Theorem}[section]
\newtheorem{lemma}[theorem]{Lemma}
\newtheorem{corollary}[theorem]{Corollary}
\theoremstyle{definition}
\newtheorem{definition}[theorem]{Definition} 
\newtheorem{remark}[theorem]{Remark}
\newtheorem{example}[theorem]{Example}
\definecolor{rossred}{rgb}{1.0,0.25,0.66}
\def\NZQ{\mathbb}               
\def\ZZ{{\NZQ Z}}
\def\opn#1#2{\def#1{\operatorname{#2}}} 
	\opn\chara{char} \opn\length{\ell} \opn\pd{pd} \opn\rk{rk}
	\opn\projdim{proj\,dim} \opn\injdim{inj\,dim} \opn\rank{rank}
	\opn\depth{depth} \opn\grade{grade} \opn\height{height}
	\opn\embdim{emb\,dim} \opn\codim{codim}
	\opn\Cl{Cl}
	\opn\Tr{Tr} \opn\bigrank{big\,rank}
	\opn\superheight{superheight}\opn\lcm{lcm}
	\opn\trdeg{tr\,deg}
	\opn\rdeg{rdeg}
	\opn\reg{reg} \opn\lreg{lreg} \opn\ini{in} \opn\lpd{lpd}
	\opn\size{size} \opn\sdepth{sdepth}
	\opn\link{link}\opn\fdepth{fdepth}\opn\lex{lex}
	\opn\tr{tr}
	\opn\type{type}
	\opn\gap{gap}
	\opn\arithdeg{arith-deg}
	\opn\revlex{revlex}
	\opn\div{div} \opn\Div{Div} \opn\cl{cl} \opn\Cl{Cl}
	\opn\Spec{Spec} \opn\Supp{Supp} \opn\supp{supp} \opn\Sing{Sing}
	\opn\Ass{Ass} \opn\Min{Min}\opn\Mon{Mon}
	\opn\Ann{Ann} \opn\Rad{Rad} \opn\Soc{Soc}
	\opn\Im{Im} \opn\Ker{Ker} \opn\Coker{Coker} \opn\Am{Am}
	\opn\Hom{Hom} \opn\Tor{Tor} \opn\Ext{Ext} \opn\End{End}
	\opn\Aut{Aut} \opn\id{id}
	\opn\nat{nat}
	\opn\pff{pf}
	\opn\Pf{Pf} \opn\GL{GL} \opn\SL{SL} \opn\mod{mod} \opn\ord{ord}
	\opn\Gin{Gin} \opn\Hilb{Hilb}\opn\sort{sort}
	\opn\PF{PF}\opn\Ap{Ap}
	\opn\mult{mult}
	\opn\bight{bight}
	\opn\div{div}
	\opn\Div{Div}
    \opn\Ind{Ind}
	\opn\aff{aff}
	\opn\relint{relint} \opn\st{st}
	\opn\lk{lk} \opn\cn{cn} \opn\core{core} \opn\vol{vol}  \opn\inp{inp} 
	\opn\nilpot{nilpot}
	\opn\link{link} \opn\star{star}\opn\lex{lex}\opn\set{set}
	\opn\width{wd}
	\opn\Fr{F}
	\opn\QF{QF}
	\opn\G{G}
	\opn\type{type}\opn\res{res}
	\opn\conv{conv}
	\opn\Int{Int}
	\opn\Deg{Deg}
	\opn\Sym{Sym}
	\opn\Con{Con}
	\opn\gr{gr}
	\def\pot#1#2{#1[\kern-0.28ex[#2]\kern-0.28ex]}
	\opn\dirlim{\underrightarrow{\lim}}
	\opn\inivlim{\underleftarrow{\lim}}
	\def\Implies{\ifmmode\Longrightarrow \else
		\unskip${}\Longrightarrow{}$\ignorespaces\fi}
	\def\implies{\ifmmode\Rightarrow \else
		\unskip${}\Rightarrow{}$\ignorespaces\fi}
	\def\iff{\ifmmode\Longleftrightarrow \else
		\unskip${}\Longleftrightarrow{}$\ignorespaces\fi}
\newcommand{\PIVAL}{3.14159265358979323846264338} 
\newcounter{i} 
\newcommand{\Circulant}[2] { 
	\begin{tikzpicture}
	\setcounter{i}{0}
	\whiledo{\value{i}<#1}{ 
		\FPmul\tempA{2}{\thei} 
		\FPdiv\tempB{\PIVAL}{#1} 
		\FPmul\tempC{\tempA}{\tempB} 
		\FPcos\varX{\tempC} 
		\FPsin\varY{\tempC} 
		\stepcounter{i} 
		\FPround\varX{\varX}{3}
		\FPround\varY{\varY}{3}
		\node (\thei) at (\varX,\varY)[place]{ }; 
		\foreach \x in {#2} { 
			\pgfmathparse{mod(\x+\thei,#1)} 
			\let\tempB\pgfmathresult
			\pgfmathparse{mod(\thei-\x,#1)} 
			\let\tempA\pgfmathresult
			\ifthenelse{\lengthtest{\tempA pt < 1 pt}}{\FPadd\tempA{\tempA}{#1}}{}
			\ifthenelse{\lengthtest{\tempB pt < 1 pt}}{\FPadd\tempB{\tempB}{#1}}{}
			\ifthenelse{\lengthtest{\tempA pt > \thei pt}}{}{\ifthenelse{\thei = \tempA}{}{\draw [] (\thei) to (\tempA)}};
			\ifthenelse{\lengthtest{\tempB pt > \thei pt}}{}{\ifthenelse{\thei = \tempB}{}{\draw [] (\thei) to (\tempB)}};
		}
	}
	\end{tikzpicture}
}
\begin{document}

\tikzstyle{place}=[draw,circle,minimum size=0.5mm,inner sep=1pt,outer sep=-1.1pt,fill=black]

 
\title{Levelable graphs}
\thanks{Version: \today}

\author[K. Bhaskara]{Kieran Bhaskara}
\address[K. Bhaskara]{Department of Mathematics and Statistics\\
McMaster University, Hamilton, ON, L8S 4L8, Canada}
\email{kieran.bhaskara@mcmaster.ca}

\author[M. Y. C. Chong]{Michael Y. C. Chong}
\address[M. Y. C. Chong]{Department of Sociology,
University of Oxford,
42-43 Park End Street,
Oxford,
OX1 1JD,
United Kingdom}
\address{Nuffield College,
University of Oxford,
New Road, Oxford,
OX1 1NF,
United Kingdom}
\email{michael.chong@sociology.ox.ac.uk}

\author[T. Hibi]{Takayuki Hibi}
\address[T. Hibi]{Department of Pure and Applied Mathematics, Graduate School
of Information Science and Technology, Osaka University, Suita, Osaka
565-0871, Japan}
\email{hibi@math.sci.osaka-u.ac.jp}

\author[N. Ragunathan]{Naveena Ragunathan}
\address[N. Ragunathan]{Department of Mathematics and Statistics\\
McMaster University, Hamilton, ON, L8S 4L8, Canada}
\email{ragunatn@mcmaster.ca}
 
\author[A. Van Tuyl]{Adam Van Tuyl}
\address[A. Van Tuyl]{Department of Mathematics and Statistics\\
McMaster University, Hamilton, ON, L8S 4L8, Canada}
\email{vantuyla@mcmaster.ca}

\keywords{well-covered graphs, level algebra, independence complex}
\subjclass[2010]{05C69, 05E40, 13E10}

\begin{abstract}
We study a family of positive weighted well-covered
graphs, which we call levelable graphs, that are 
related to a construction
of level artinian rings in commutative algebra.  
A graph $G$ is levelable if there exists a weight function
with positive integer values 
on the vertices of $G$ such that $G$ is well-covered with
respect to this weight function.
That is, the sum of the weights in
any maximal independent set of vertices of $G$ is the same.  
We describe some of the basic properties of levelable 
graphs and classify the levelable
graphs for some families of graphs, e.g., trees,
cubic circulants, Cameron--Walker graphs. 
We also explain the connection between levelable graphs
and a class of level artinian rings. Applying a result
of Brown and Nowakowski about weighted well-covered graphs, we 
show that for most graphs, their edge ideals are not Cohen--Macaulay.
\end{abstract}
 
\maketitle


\section{Introduction}

In this paper we introduce levelable graphs, a family of 
weighted well-covered graphs
that can be used to be construct  level artinian rings, objects 
of great interest in commutative algebra.   
Throughout this paper $G = (V,E)$ denotes a finite simple graph on the 
vertex set $V = V(G) = \{x_1,\ldots,x_n\}$ and with edge set $E=E(G)$. 
A subset $W$ of $V$ is an {\it independent} set if 
$e \not\subseteq W$ for all edges $e \in E$. We say $W$ is a {\it maximal 
independent} set if no independent set of $G$ strictly includes $W$.  Let ${\rm MaxInd}(G)$ denote
the set of maximal independent sets of $G$.
In 1970 Plummer \cite{P1970} called a
graph {\it well-covered}
if every element of ${\rm MaxInd}(G)$ has the same cardinality. 

Weighted well-covered graphs, a generalization
of well-covered graphs, were introduced in 1998 by 
Caro, Ellingham, and Ramey \cite{CER1998}.  For a fixed
field $F$, a {\it weight function} on $G$ is a
function $w:V(G) \rightarrow F$ that assigns to vertex
$x_i \in V(G)$ a weight $c_i$.  Note that we can
view any $(c_1,\ldots,c_n) \in F^{|V|}$ as a weight
function on $G$.  A graph $G$ with a weight function
$(c_1,\ldots,c_n)$ is a {\it weighted well-covered graph} if
there is a $c\in F$ such that 
\[
\sum_{x_i \in  W} c_i = c 
~\mbox{for all $W \in {\rm MaxInd}(G)$}.
\]
In this case, $c$ is called the {\it independence weight}
of $G$.
In this language, a graph is well-covered if $G$ 
is a weighted well-covered graph with respect to the weight
function $(1,\ldots,1)$, and the independence
weight is the common cardinality of the 
maximum independent sets. Associated to
any graph $G$ is the vector space ${\rm WCW}(G)$ which consists
of all weight functions $(c_1,\ldots,c_n)$ that make
$G$ a weighted well-covered graph.  Understanding the 
structure of ${\rm WCW}(G)$ has
been the focus of one branch of research surrounding weighted
well-covered graphs; see, for example, \cite{BKMUV2014,BN2005,BN2007,CER1998,LT2011,LT2015,LT2015a,T2022}.

When the field $F$ is $\mathbb{R}$ or $\mathbb{Q}$,
Caro, {\it et al.} \cite{CER1998} defined a {\it positive well-covered weighting} to be a weight function $(c_1,\ldots,c_n)$ 
that not only makes $G$ weighted well-covered, but $c_i > 0$
for all $i$.  As they remark in \cite[p. 653]{CER1998}:
\begin{quote}
We would guess that determining whether a positive
well-covered weighting exists is, in general, a difficult problem.
\end{quote}
We are interested in a variation of this 
``difficult problem'' by also requiring the $c_i$'s in the
weight function to
be positive integers. To this end, we make the following
definition:

\begin{definition}\label{defn.levelable}
A graph  $G$ is a {\it levelable graph} if there exists a 
weight function \[(c_1,\ldots,c_n) \in \mathbb{N}^n_{>0}\]
that makes $G$ a weighted well-covered graph.  In this
case, we say $G$ is a levelable graph with respect
to the weight function $(c_1,\ldots,c_n)$.  
\end{definition}

Our choice of name is inspired by the fact that a levelable
graph allows one to construct a level graded artinian ring.
Level rings were introduced by Stanley \cite{Stan77}
almost 50 years ago as a class of rings that sits 
``between'' Gorenstein rings and Cohen--Macaulay
rings. Understanding the properties of level rings has inspired
a significant amount of research, including 
\cite{GHMS2007,HSZ, HLO, HH06, H88, H89, H92, MNZ},
in commutative algebra.  
By applying a result of Van Tuyl and Zanello \cite{VTZ2010}, 
we have the  following link between graph theory 
and commutative algebra:

\begin{theorem}\label{maintheorem}
    Let $G = (V,E)$ be a graph,  and let $I(G)$ be the edge ideal of
    $G$ in the ring $R = \mathbb{K}[x_1,\ldots,x_n]$.  Then
    $G$ is a levelable graph with respect to the
    weight function $(c_1,\ldots,c_n)$ if and only if
    \[R/(I(G)+\langle x_1^{c_1+1},\ldots,x_n^{c_n+1}\rangle)\]
    is a level graded artinian ring.
\end{theorem}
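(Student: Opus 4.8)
The plan is to make the commutative-algebra side completely explicit through a socle computation, and then to invoke the characterization of level artinian algebras used by Van Tuyl and Zanello \cite{VTZ2010}. The link between the two sides is that $I(G)$ is precisely the Stanley--Reisner ideal of the independence complex $\Ind(G)$, whose facets are exactly the elements of ${\rm MaxInd}(G)$; thus the theorem will be an instance of their result applied to $\Delta = \Ind(G)$ together with the exponent vector $(c_1+1,\dots,c_n+1)$.

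First I would record the easy structural facts about $A := R/(I(G)+\langle x_1^{c_1+1},\dots,x_n^{c_n+1}\rangle)$. Its defining ideal is homogeneous and monomial and contains a pure power of every variable, so $A$ is a graded artinian algebra regardless of the $c_i$. Next I would describe its standard monomials: writing $x^{\ab}=x_1^{a_1}\cdots x_n^{a_n}$, such a monomial is nonzero in $A$ if and only if $a_i\le c_i$ for all $i$ and $\supp(x^{\ab})$ is an independent set of $G$. Since $A$ is a monomial quotient, $\Soc(A)=(0:_A\mm)$ is spanned by standard monomials, so it suffices to decide which standard monomials are annihilated by every variable.

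The main computation is the following. Fix a standard monomial $x^{\ab}$ with support $W$. For $i\in W$, the monomial $x_i x^{\ab}$ is again standard unless $a_i=c_i$; for $i\notin W$, the monomial $x_i x^{\ab}$ is again standard unless $W\cup\{i\}$ fails to be independent --- and here the hypothesis $c_i>0$ is exactly what rules out the competing possibility $a_i=c_i$. Hence $x^{\ab}\in\Soc(A)$ if and only if $x^{\ab}=\prod_{i\in W}x_i^{c_i}$ and every vertex outside $W$ is adjacent to some vertex of $W$; since $W$ is independent, the latter says precisely that $W\in{\rm MaxInd}(G)$. Therefore
\[
  \Soc(A)=\operatorname{span}_{\mathbb{K}}\Bigl\{\,\textstyle\prod_{i\in W}x_i^{c_i}\ :\ W\in{\rm MaxInd}(G)\,\Bigr\},
\]
and the basis vector indexed by $W$ lies in degree $\sum_{i\in W}c_i$.

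To finish, I would use that a graded artinian algebra is level if and only if its socle is concentrated in a single degree (this is where \cite{VTZ2010}, or simply the definition of level, enters). Combining this with the previous step, $A$ is level if and only if $\sum_{i\in W}c_i$ is the same for every $W\in{\rm MaxInd}(G)$, which is by definition the statement that $G$ is a levelable graph with respect to $(c_1,\dots,c_n)$. I do not expect a genuine obstacle here; the only points needing a little care are that the socle of a monomial quotient is itself spanned by monomials, the identification of the facets of $\Ind(G)$ with the maximal independent sets of $G$, and the observation (made above) that positivity of the $c_i$ is what makes the socle monomials correspond bijectively to maximal independent sets --- were some $c_i$ equal to $0$, the variable $x_i$ would be killed outright and this correspondence would break down.
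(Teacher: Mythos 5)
Your proof is correct, but it follows a genuinely different route from the paper's. The paper obtains Theorem \ref{maintheorem} by passing through the notion of a levelable simplicial complex: it identifies $I(G)$ with the Stanley--Reisner ideal of ${\rm Ind}(G)$, whose facets are the elements of ${\rm MaxInd}(G)$, and then invokes the Van Tuyl--Zanello criterion (Theorem \ref{levelableclassification}), which characterizes levelness of $A(\Delta,(a_1,\ldots,a_n))$ by a linear system attached to the facets; the only computation left is the bookkeeping shift between $(c_1,\ldots,c_n)$ and $(c_1+1,\ldots,c_n+1)$. You instead prove the statement from scratch by computing the socle of $A=R/(I(G)+\langle x_1^{c_1+1},\ldots,x_n^{c_n+1}\rangle)$: the standard monomials are those $x^{\ab}$ with $a_i\le c_i$ and independent support, the socle of a monomial quotient is spanned by monomials, and your case analysis correctly identifies the socle basis as $\bigl\{\prod_{i\in W}x_i^{c_i} : W\in{\rm MaxInd}(G)\bigr\}$, sitting in degrees $\sum_{i\in W}c_i$; levelness is then exactly the constancy of these sums, and your use of $c_i>0$ for vertices outside $W$ is precisely where positivity is needed. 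Two small points to make explicit: the definition of level requires the socle to be concentrated in the top degree $e$ of $A$, so you should add the standard remark that $A_e\subseteq\Soc(A)$ for any graded artinian algebra, whence ``socle in a single degree'' forces that degree to be $e$; and, despite your framing, you do not really invoke \cite[Theorem 6]{VTZ2010} --- you reprove the special case you need. What your approach buys is self-containedness (no inverse-systems machinery, no detour through levelable simplicial complexes); what the paper's approach buys is brevity and the explicit dictionary between weight functions and exponent vectors that it reuses throughout its final section.
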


\noindent
This connection with combinatorial commutative algebra,
as well as the ``difficult problem'' mentioned above, provides
motivation to classify or find families of levelable graphs.

It is immediate that levelable graphs exist -- 
all well-covered graphs are levelable with weight function $(1,\ldots,1).$  The goal of this paper is to identify 
other families of levelable graphs.  
Accordingly, we 
classify all the levelable graphs among 
the following families:
\begin{enumerate}
    \item[$\bullet$] Complete multipartite graphs (see Corollary \ref{cor:multipartite}),
    \item[$\bullet$] Cameron--Walker graphs (see Theorem \ref{theorem:CW}),
    \item[$\bullet$] Co-chordal graphs (see Theorem \ref{thm.cochordal}),
    \item[$\bullet$] Trees (see Theorem \ref{thm.trees}), and
    \item[$\bullet$] Cubic circulant graphs (see Theorems \ref{thm.circulant1} and \ref{thm.circulant2}).
\end{enumerate}
We also provide constructions of levelable graphs,
and obstructions that prevent $G$ from being levelable.

As an interesting by-product of
this new connection between weighted well-covered graphs
and graded artinian level rings, we can use results
about the vector space ${\rm WCW}(G)$ to deduce some
results in combinatorial commutative algebra.  
In particular, Brown and Nowakowski \cite{BN2007}
showed that for a random graph (suitably defined),
the expected dimension of ${\rm WCW}(G)$ is zero.  
This implies that for a random graph $G$,
there is no choice of positive integers $(a_1,\ldots,a_n)$
that makes  \[R/(I(G)+\langle x_1^{a_1},\ldots,x_n^{a_n}\rangle)\]
a level graded artinian
ring (see Corollary \ref{cor.neverlevel}).  Moreover, 
Brown and Nowakowski's result allows us to show
that for a  random graph $G$, $R/I(G)$ is not
Cohen--Macaulay (see Corollary \ref{cor.nevercm}),
complementing results of 
Docthermann--Newman \cite[Corollary 1.4]{DN2023} and
Erman-Yang \cite[Corollary 7.1]{EY2018}.  

We have structured this paper so that the
first part focuses on the properties of levelable graphs.  
Only in the last section do we introduce the relevant 
commutative algebra in order to keep the
background on algebra to a minimum.
In Section 2 we include some basic
properties of levelable graphs and provide 
a useful lemma that
describes an obstruction to a graph being levelable.  Section 3
includes some constructions of levelable graphs
from a given levelable graph.  
The sections that follow
classify the levelable graphs among some families of graphs.
In the final section, we explain the connection to 
commutative algebra, which motivated our 
original interest in these
graphs, and explain how a result
of Brown and Nowakowski has consequences for combinatorial 
commutative algebra.


\section{Basic properties of levelable graphs}

We begin by exhibiting some of the basic properties and families
of levelable graphs.  We first note that we can
restrict to {\it connected} graphs, that is, for any two vertices
of $G$, there is a path in $G$ that connects the two vertices.

\begin{lemma}\label{lemma.connected}
Let $G$ and $H$ be disjoint graphs.  Then $G \cup H$ is levelable if
and only if $G$ and $H$ are levelable.
\end{lemma}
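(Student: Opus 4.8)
The plan is to prove the two directions separately, using the fact that the maximal independent sets of a disjoint union decompose as disjoint unions of maximal independent sets of the components. Concretely, if $G$ and $H$ have no vertices in common, then $W \in \mathrm{MaxInd}(G \cup H)$ if and only if $W = W_G \cup W_H$ with $W_G \in \mathrm{MaxInd}(G)$ and $W_H \in \mathrm{MaxInd}(H)$; this is immediate because there are no edges between $V(G)$ and $V(H)$, so independence and maximality are checked componentwise. I would state this correspondence as the first step, since both directions of the lemma rest on it.

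For the forward direction, suppose $G \cup H$ is levelable with respect to a weight function $\wb$ taking positive integer values on $V(G) \cup V(H)$, with independence weight $c$. Let $\wb_G$ and $\wb_H$ be the restrictions of $\wb$ to $V(G)$ and $V(H)$ respectively; these are still positive-integer-valued. Fix any $W_H^0 \in \mathrm{MaxInd}(H)$ and let $c_H^0 = \sum_{x_i \in W_H^0} (\wb_H)_i$. Then for an arbitrary $W_G \in \mathrm{MaxInd}(G)$, the set $W_G \cup W_H^0$ is a maximal independent set of $G \cup H$, so $\sum_{x_i \in W_G} (\wb_G)_i + c_H^0 = c$, giving $\sum_{x_i \in W_G}(\wb_G)_i = c - c_H^0$ independently of $W_G$. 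Hence $G$ is levelable with respect to $\wb_G$, and symmetrically $H$ is levelable with respect to $\wb_H$.

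For the converse, suppose $G$ is levelable with respect to a positive-integer weight function $\wb_G$ with independence weight $c_G$, and $H$ is levelable with respect to $\wb_H$ with independence weight $c_H$. Define a weight function $\wb$ on $V(G) \cup V(H)$ by $\wb_i = (\wb_G)_i$ for $x_i \in V(G)$ and $\wb_i = (\wb_H)_i$ for $x_i \in V(H)$; this takes positive integer values. By the correspondence, every $W \in \mathrm{MaxInd}(G \cup H)$ has the form $W_G \cup W_H$ with $W_G \in \mathrm{MaxInd}(G)$, $W_H \in \mathrm{MaxInd}(H)$, and then $\sum_{x_i \in W} \wb_i = \sum_{x_i \in W_G}(\wb_G)_i + \sum_{x_i \in W_H}(\wb_H)_i = c_G + c_H$, a constant. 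So $G \cup H$ is levelable.

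There is no real obstacle here; the only point requiring a moment's care is verifying the componentwise description of $\mathrm{MaxInd}(G \cup H)$ — in particular that \emph{maximality} passes to each component, which uses that a larger independent set in one component would enlarge the union without affecting the other. Everything else is bookkeeping. (One could also shorten the argument by invoking the analogous statement for unweighted well-covered graphs, but since the weighted version is just as easy I would give it directly.)
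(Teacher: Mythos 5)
Your proposal is correct and follows essentially the same route as the paper's proof: the componentwise description of $\mathrm{MaxInd}(G\cup H)$, concatenating the two weight functions for the ``if'' direction, and fixing one maximal independent set of $H$ to show the restricted weights level $G$ for the ``only if'' direction. No issues.
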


\begin{proof}
Let $G$ be a graph 
on $V(G) = \{x_1,\ldots,x_n\}$ and
$H$ a graph on $V(H) = \{y_1,\ldots,y_m\}$.  
Because the graphs $G$ and $H$ are 
disjoint, we have
\[{\rm MaxInd}(G \cup H) = \{W \cup Y ~|~ W \in {\rm MaxInd}(G),~ 
Y \in {\rm MaxInd}(H)\}.\]

{\bf (``If'')} 
Suppose that $G$ and $H$ are levelable with respect to 
the weight functions $(c_1,\ldots,c_n)$ and 
$(d_1,\ldots,d_m)$, respectively.
Let $c$ be the independence weight of $G$ and $d$ the 
independence weight of $H$.
We claim that $G \cup H$ is a levelable graph
with respect to the weight function
$(c_1,\ldots,c_n,d_1,\ldots,d_m)$.
Indeed, for any $Z \in {\rm MaxInd}(G \cup H)$,
we have $Z = W \cup Y$ with $W \cap Y = \emptyset$ and
$W \in {\rm MaxInd}(G)$ and $Y \in {\rm MaxInd}(H)$. Thus
\[\sum_{z_i \in Z} e_i = \sum_{x_i \in W} c_i + \sum_{y_j \in Y} d_j =
c+d.\]

{\bf (``Only If'')} 
Suppose that 
$G \cup H$ is levelable with respect to
the weight function \[(e_1,\ldots,e_n,e_{n+1},\ldots,e_{n+m})\]
with independence weight $e$.
We claim $G$ and $H$ are levelable with respect
to the weight functions
$c = (e_1,\ldots,e_n)$ and 
$d = (e_{n+1},\ldots,e_{n+m})$, respectively.  Fix a
maximal independent set $Y$ of $H$ and consider any
$W_i \in {\rm MaxInd}(G)$.  
Then $W_i \cup Y \in {\rm MaxInd}(G \cup H)$ and so
\[\sum_{z_j \in W_i \cup Y} e_j = \sum_{z_j \in W_i} e_j + 
\sum_{z_j \in Y} e_j = e.\]
Rearranging this equation, we have 
\begin{equation}\label{sum}
\sum_{z_j \in W_i}e_j = e - \sum_{z_j \in Y} e_j.
\end{equation}
Note that since $W_i$ is a subset of $\{x_1,\ldots,x_n\}$, the 
$e_j$'s that appear in $\sum_{z_j \in W_i}e_i$ only appear among
$\{e_1,\ldots,e_n\}$.  Because \eqref{sum} is true for 
all maximal independent sets of $G$, $(e_1,\ldots,e_n)$ is a
weight function that makes $G$ a levelable  
graph with independence weight $e - \sum_{z_j \in Y} e_j$.  

By swapping the roles of $G$ and $H$, the same proof also shows
that $H$ is levelable.
\end{proof}

As noted in the introduction, all well-covered graphs
are levelable.  The following lemma allows us to deduce that
all complete multipartite graphs, which are not necessarily well-covered,
are levelable. Consequently, the set of levelable graphs
is strictly larger than the class of well-covered graphs.

\begin{lemma}\label{lem.pairwise} Suppose $G$ is a graph such
that the elements of  ${\rm MaxInd}(G)$ are pairwise 
disjoint.  Then $G$ is a levelable graph.
\end{lemma}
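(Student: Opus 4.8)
The plan is to first observe that the hypothesis forces ${\rm MaxInd}(G)$ to be a \emph{partition} of $V(G)$, and then to exploit this rigidity to write down an explicit weighting. For the partition claim: every vertex $x \in V(G)$ lies in at least one maximal independent set, since the singleton $\{x\}$ is independent and hence extends to some $W \in {\rm MaxInd}(G)$; and $x$ lies in at most one such set because the elements of ${\rm MaxInd}(G)$ are pairwise disjoint by hypothesis. Thus the maximal independent sets of $G$ are nonempty, mutually disjoint, and cover $V(G)$.

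Next, I would write ${\rm MaxInd}(G) = \{W_1,\ldots,W_k\}$, set $\ell = \lcm(|W_1|,\ldots,|W_k|)$, and define the weight function $(c_1,\ldots,c_n) \in \mathbb{N}^n_{>0}$ by declaring $c_j = \ell/|W_i|$ whenever $x_j \in W_i$. Because the sets $W_1,\ldots,W_k$ partition $V(G)$, each vertex lies in exactly one block, so this assignment is well-defined; and each $c_j$ is a positive integer since $|W_i|$ divides $\ell$. Finally one checks directly that for every $i$,
\[
\sum_{x_j \in W_i} c_j = |W_i| \cdot \frac{\ell}{|W_i|} = \ell,
\]
so $G$ is a weighted well-covered graph with respect to $(c_1,\ldots,c_n)$ with independence weight $\ell$, and hence $G$ is levelable.

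There is really no serious obstacle here: the only step with any content is the partition observation, and once that is in place the construction is forced and the verification is a one-line computation. In effect the lemma just records that a partition of $V(G)$ into independent sets can always be "balanced" by scaling each block by the reciprocal of its size (cleared to integers via an lcm). I would then immediately note the corollary that complete multipartite graphs are levelable, since the maximal independent sets of a complete multipartite graph are exactly its parts, which are pairwise disjoint.
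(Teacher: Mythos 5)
Your proof is correct and follows essentially the same route as the paper: observe that the pairwise-disjoint maximal independent sets partition $V(G)$, then weight each vertex of a block $W_i$ by $\lcm(|W_1|,\ldots,|W_k|)/|W_i|$ so every block sums to the same value. The only difference is that you spell out the partition observation slightly more explicitly; nothing is missing.
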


\begin{proof}
    Suppose ${\rm MaxInd}(G) = \{W_1,\ldots,W_t\}$
    with $d_i = |W_i|$ for $i =1,\ldots,t$.
    Since every vertex $x_i$ appears in some $W_i$
    and because the $W_i$'s are pairwise disjoint, we have that
    $W_1 \cup \cdots \cup W_t$ is a partition of $V(G)$.
    Thus, after relabeling, we can
    assume that 
    \[W_i = \{x_{d_1+d_2+ \cdots + d_{i-1}+1},\ldots,
    x_{d_1+d_2+\cdots +d_i}\} ~~\mbox{for $i=1,\ldots,t$},\] where $d_{0}=0.$
    
    Set $d = {\rm lcm}(d_1,\ldots,d_t)$.  A straightforward
    calculation will now show $G$ is a levelable
    graph with respect to the weight function
    \[(\underbrace{d/d_1,\ldots,d/d_1}_{d_1},\underbrace{d/d_2,\ldots,d/d_2}_{d_2},\ldots, 
    \underbrace{d/d_t,\ldots,d/d_t}_{d_t}).\]
\end{proof}

For integers $d, a_1,\ldots, a_d \geq 1$,
the {\it complete $d$-partite graph} $K_{a_1,\ldots,a_d}$
is the graph with vertices 
$V = \{x_{i,j} ~|~ 1 \leq i \leq d, ~1 \leq j \leq a_i\}$ and edges of the form
$\{x_{i,j},x_{k,l}\}$ for all $1 \leq i < k \leq d$.  
Note that with this notation, the well-known
complete graph $K_d$ is the graph $K_{1,\ldots,1}$.\footnote[1]{This is an abuse 
of notation, since our notation for a complete multipartite graph would imply that $K_d$ should
be the  $1$-partite graph consisting
of $d$ isolated vertices.  Moving forward,
we will use the standard notation of $K_d$ for 
the complete graph.} Since
the maximal independent sets of $K_{a_1,\ldots,a_d}$ 
are the pairwise disjoint sets
$W_i = \{x_{i,1},\ldots,x_{i,a_i}\}$ for $i=1,\ldots,d$, we have
the following corollary.
\begin{corollary}\label{cor:multipartite}
    For all $d, a_1,\ldots,a_d \geq 1$, the
    complete $d$-partite graph $K_{a_1,\ldots,a_d}$ 
    is levelable.
\end{corollary}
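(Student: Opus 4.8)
The plan is to verify that Corollary~\ref{cor:multipartite} follows immediately from Lemma~\ref{lem.pairwise} once we confirm the stated description of the maximal independent sets of $K_{a_1,\ldots,a_d}$. First I would recall that a subset $W \subseteq V(K_{a_1,\ldots,a_d})$ is independent precisely when it contains no edge; since every edge joins a vertex in part $i$ to a vertex in part $k$ with $i \neq k$, an independent set cannot contain vertices from two different parts. Hence every independent set is contained in a single part $W_i = \{x_{i,1},\ldots,x_{i,a_i}\}$, and conversely each $W_i$ is independent because no edge lies within a part. It follows that $W_i$ is a maximal independent set for each $i$, and that these are the only maximal independent sets, so ${\rm MaxInd}(K_{a_1,\ldots,a_d}) = \{W_1,\ldots,W_d\}$.

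The key observation is then that the sets $W_1,\ldots,W_d$ are pairwise disjoint, as the parts of a multipartite graph partition the vertex set by construction. Therefore $K_{a_1,\ldots,a_d}$ satisfies the hypothesis of Lemma~\ref{lem.pairwise}, and we conclude that it is levelable. If one wants to be explicit, the weight function produced by the proof of Lemma~\ref{lem.pairwise} assigns weight $d'/a_i$ to every vertex in part $i$, where $d' = {\rm lcm}(a_1,\ldots,a_d)$, giving common independence weight $d'$.

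I do not anticipate any real obstacle here: the entire content is the elementary structural fact about independent sets in complete multipartite graphs, which is already sketched in the sentence preceding the corollary statement. The only point requiring a word of care is the edge case where some $a_i = 1$ or $d = 1$ (so that the graph is a single clique or a set of isolated vertices, respectively); in either situation the description of ${\rm MaxInd}$ still holds and the parts still partition $V$, so Lemma~\ref{lem.pairwise} applies without modification. Thus the proof is a one-line deduction, and I would present it as such.
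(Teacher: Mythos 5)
Your proposal is correct and is exactly the paper's argument: the paper deduces the corollary by observing that the maximal independent sets of $K_{a_1,\ldots,a_d}$ are precisely the pairwise disjoint parts $W_i = \{x_{i,1},\ldots,x_{i,a_i}\}$ and then invoking Lemma \ref{lem.pairwise}. Your extra remarks (the explicit weight function $\operatorname{lcm}(a_1,\ldots,a_d)/a_i$ and the edge cases $d=1$ or $a_i=1$) are consistent with the proof of that lemma and require no changes.
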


The {\it independence number} of $G$, denoted $\alpha(G)$, is
the size of the largest maximal independent set of $G$.   If
$\alpha(G)$ is small, then $G$ must also be levelable.

\begin{theorem}\label{thm.smallalapha}
    If $G$ is a graph on $\{x_1,\dots,x_n\}$ with $\alpha(G) \leq 2$, then 
    $G$ is levelable.
\end{theorem}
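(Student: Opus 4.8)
The plan is to write down an explicit positive-integer weighting and check it directly. Because $\alpha(G)\le 2$, every element of ${\rm MaxInd}(G)$ has cardinality $1$ or $2$, so only two types of maximal independent sets must be controlled, and I would first describe each type. A singleton $\{x_i\}$ belongs to ${\rm MaxInd}(G)$ exactly when it cannot be enlarged, i.e.\ when $x_i$ is adjacent to every other vertex of $G$; call such an $x_i$ a \emph{dominating vertex}. On the other hand, if $\{x_i,x_j\}\in{\rm MaxInd}(G)$, then $x_i$ and $x_j$ are non-adjacent, so in particular neither of them is a dominating vertex.

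Given this dichotomy, I would define $(c_1,\ldots,c_n)\in\mathbb{N}^n_{>0}$ by setting $c_i=2$ when $x_i$ is a dominating vertex and $c_i=1$ otherwise, and then verify that $\sum_{x_k\in W}c_k=2$ for every $W\in{\rm MaxInd}(G)$. If $W=\{x_i\}$, then $x_i$ is dominating by the first observation, so the sum is $c_i=2$; if $W=\{x_i,x_j\}$, then by the second observation neither vertex is dominating, so the sum is $1+1=2$. Since $\alpha(G)\le 2$ guarantees these are the only possibilities, $G$ is a weighted well-covered graph with independence weight $2$ with respect to this weighting, hence levelable.

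The remaining bookkeeping is to confirm the boundary cases are harmless: when $n=1$ the lone vertex is vacuously dominating and its singleton is the only maximal independent set, of weight $2$; and when $G$ has no dominating vertex the weighting is simply $(1,\ldots,1)$, which is consistent with the fact that in that case every maximal independent set already has size $2$. There is no real obstacle here — the one point to get right is the structural characterization of which singletons and which pairs of vertices are maximal independent sets, together with the remark that a vertex lying in a size-two maximal independent set cannot be dominating, which is exactly what makes the two weight values $1$ and $2$ mutually compatible.
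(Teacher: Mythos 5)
Your proof is correct and is essentially the paper's own argument: the same weight function (weight $2$ on the vertices whose singletons are maximal independent sets, weight $1$ on all others) with the same key observation that such a vertex cannot lie in a two-element maximal independent set. Your phrasing via dominating vertices, and your uniform treatment of the $\alpha(G)=1$ case, are only cosmetic differences from the paper's case split.
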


\begin{proof}
    If $\alpha(G) =1$, then $G = K_a$ is the 
    complete $a$-partite graph (or simply, complete
    graph) and is levelable by Theorem \ref{cor:multipartite}.

    Suppose that $\alpha(G) =2$.  If all $W \in {\rm MaxInd}(G)$
    have $|W|=2$, then $G$ is well-covered, and thus levelable.
    So, suppose that $|W| =1$ for at least
    one $W \in {\rm MaxInd}(G)$.  Let
    ${\rm MaxInd}(G) = \{W_1,\ldots,W_t\}$. After relabeling we can assume
    that $|W_1| = \cdots = |W_s| =1$ and $|W_{s+1}|=\cdots=
    |W_t|=2$.  Note that if $|W_j|=1$, then $W_j \cap W_k 
    = \emptyset$ for all $j \neq k$.  So, we can relabel
    again so that $W_i = \{x_i\}$ for $i=1,\ldots,s$,
    and $W_{s+1} \cup \cdots \cup W_t = \{x_{s+1},\ldots,x_n\}$.
    Then $G$ is levelable with respect to the weight
    function
    \[c = (\underbrace{2,\ldots,2}_s,\underbrace{1,\ldots,1}_{n-s})\]
    since $\sum_{x_i \in W_i} c_i = 2$ if $|W_i| =1$ and
    $\sum_{x_i \in W_i} c_i = 2$ if $|W_i|=2$. 
\end{proof}

We let $C_n$ denote the {\it $n$-cycle} on $n$-vertices with
$n \geq 3$. For any
graph $G$, we write $G^c$ for the {\it graph complement} of $G$,
that is, the graph with the same vertex set as $G$, but
edge set $\{\{x_i,x_j\} : \{x_i,x_j\} \not\in E(G)\}$.
When $n =3 $, $C_n^c$ is three disjoint vertices,
and thus clearly levelable. On the other hand,
$\alpha(C_n^c) = 2$ if $n \geq 4$.
Consequently, we have the following corollary:

\begin{corollary}\label{cor.complementcycle}
For any $n \geq 3$, $C_n^c$ is levelable.
\end{corollary}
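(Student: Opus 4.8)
The statement to prove is Corollary \ref{cor.complementcycle}: for all $n \geq 3$, the complement $C_n^c$ of the $n$-cycle is levelable. The plan is to split into the two cases already flagged in the surrounding text and invoke the appropriate earlier result in each. First, when $n = 3$, the cycle $C_3 = K_3$ is complete, so its complement $C_3^c$ is the graph on three vertices with no edges; every single vertex is a maximal independent set (indeed the only maximal independent set is all of $\{x_1,x_2,x_3\}$, since with no edges the whole vertex set is independent), so $C_3^c$ is well-covered, hence levelable with weight function $(1,1,1)$ by the remark in the introduction that all well-covered graphs are levelable.

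For the main case $n \geq 4$, I would argue that $\alpha(C_n^c) = 2$ and then quote Theorem \ref{thm.smallalapha}. To see $\alpha(C_n^c) = 2$: a set $W \subseteq V(C_n^c)$ is independent in $C_n^c$ exactly when $W$ induces a complete subgraph (a clique) in $C_n$, because edges of $C_n^c$ are precisely non-edges of $C_n$. A clique in $C_n$ with $n \geq 4$ can have size at most $2$, since $C_n$ is triangle-free for $n \geq 4$ (any three vertices of $C_n$ fail to be pairwise adjacent). On the other hand, any single edge $\{x_i, x_{i+1}\}$ of $C_n$ is a clique of size $2$, giving an independent set of size $2$ in $C_n^c$, so $\alpha(C_n^c) = 2$. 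Theorem \ref{thm.smallalapha} then immediately yields that $C_n^c$ is levelable.

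Combining the two cases completes the proof. I do not anticipate any real obstacle here: the only point requiring a moment's care is the translation "independent in $G^c$ $\iff$ clique in $G$," together with the elementary fact that cycles of length at least $4$ are triangle-free; both are standard. Everything else is a direct appeal to Theorem \ref{thm.smallalapha} and to the fact that well-covered graphs are levelable.
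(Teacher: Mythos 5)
Your proposal is correct and follows essentially the same route as the paper: handle $n=3$ as the edgeless graph on three vertices, and for $n \geq 4$ observe that $\alpha(C_n^c) = 2$ and invoke Theorem \ref{thm.smallalapha}. The only difference is cosmetic — you spell out the clique/independent-set translation justifying $\alpha(C_n^c)=2$, which the paper simply asserts.
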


The next example exhibits a 
non-levelable graph
with $\alpha(G) =3$. In fact, this
example illustrates that levelable
graphs are not preserved by taking induced subgraphs.

\begin{example}\label{ex.P5}
Let $P_n$ denote the {\it path graph} 
of length $n-1$ on $n \geq 2$ vertices,
that is, the graph with vertex set $\{x_1,\ldots,x_n\}$
and edge set $\{\{x_i,x_{i+1}\} ~:~ 1 \leq i \leq n-1\}$.
We show that $P_5$ (see Figure \ref{fig:P5}) is not levelable.  
We see that ${\rm MaxInd}(P_5) = 
\{\{x_1,x_3,x_5\}, \{x_1,x_4\},\{x_2,x_4\},\{x_2,x_5\}\}$, and thus $\alpha(G) =3$.
\begin{figure}[h]
\centering
\begin{tikzpicture}[scale=1.5]
\coordinate (x1) at (0,0){};
\coordinate (x2) at (1,0){};
\coordinate (x3) at (2,0){};
\coordinate (x4) at (3,0){};
\coordinate (x5) at (4,0){};
\fill(x1)circle(0.7mm);
\fill(x2)circle(0.7mm);
\fill(x3)circle(0.7mm);
\fill(x4)circle(0.7mm);
\fill(x5)circle(0.7mm);
\draw(x1)--(x2)--(x3)--(x4)--(x5);
\draw(x1)node[below=1mm]{{$x_1$}};
\draw(x2)node[below=1mm]{{$x_2$}};
\draw(x3)node[below=1mm]{{$x_3$}};
\draw(x4)node[below=1mm]{{$x_4$}};
\draw(x5)node[below=1mm]{{$x_5$}};
\end{tikzpicture}
\caption{The path $P_5$ which is non-levelable} \label{fig:P5}
\end{figure}
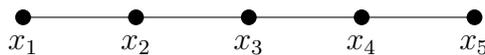 
Suppose that a weight function $(c_1,\ldots,c_5)$ makes $P_5$ levelable.  Then 
\[(c_1+c_3+c_5)+(c_2+c_4) = (c_1+c_4)+(c_2+c_5).\]
Consequently $c_3 =0$, contradicting the fact that
the entries of $(c_1,\ldots,c_5)$ are all positive.  

The graph given
in Figure \ref{fig:P5subgraph} 
is a well-covered graph that has $P_5$
as an induced subgraph.  
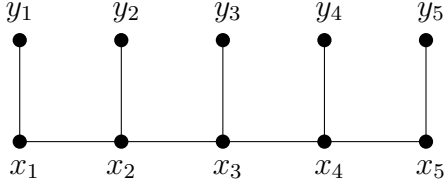
\begin{figure}[ht]
\centering
\begin{tikzpicture}[scale=1.35]
\coordinate (x1) at (0,0){};
\coordinate (x2) at (1,0){};
\coordinate (x3) at (2,0){};
\coordinate (x4) at (3,0){};
\coordinate (x5) at (4,0){};
\coordinate (y1) at (0,1){};
\coordinate (y2) at (1,1){};
\coordinate (y3) at (2,1){};
\coordinate (y4) at (3,1){};
\coordinate (y5) at (4,1){};
\fill(x1)circle(0.7mm);
\fill(x2)circle(0.7mm);
\fill(x3)circle(0.7mm);
\fill(x4)circle(0.7mm);
\fill(x5)circle(0.7mm);
\fill(y1)circle(0.7mm);
\fill(y2)circle(0.7mm);
\fill(y3)circle(0.7mm);
\fill(y4)circle(0.7mm);
\fill(y5)circle(0.7mm);
\draw(x1)--(x2)--(x3)--(x4)--(x5);
\draw(x1)--(y1);
\draw(x2)--(y2);
\draw(x3)--(y3);
\draw(x4)--(y4);
\draw(x5)--(y5);
\draw(x1)node[below=1mm]{{ $x_1$}};
\draw(x2)node[below=1mm]{{$x_2$}};
\draw(x3)node[below=1mm]{{ $x_3$}};
\draw(x4)node[below=1mm]{{ $x_4$}};
\draw(x5)node[below=1mm]{{ $x_5$}};
\draw(y1)node[above=1mm]{{$y_1$}};
\draw(y2)node[above=1mm]{{ $y_2$}};
\draw(y3)node[above=1mm]{{ $y_3$}};
\draw(y4)node[above=1mm]{{ $y_4$}};
\draw(y5)node[above=1mm]{{ $y_5$}};
\end{tikzpicture}
\caption{
A levelable graph with $P_5$ as an induced
subgraph} \label{fig:P5subgraph}
\end{figure} 
Consequently, the levelable property
is not preserved by taking induced
subgraphs.
\end{example}

Embedded in Example \ref{ex.P5} is a condition on
the elements of ${\rm MaxInd}(G)$
that prevents $G$ from being levelable.  We make this condition
explicit in the next lemma.

\begin{lemma} \label{lem.noncondition}
    Suppose there exist $F_1, F_2, F_3, F_4 \in {\rm MaxInd}(G)$ such that $F_3 \cup F_4 \subsetneq F_1 \cup F_2$ and $F_3 \cap F_4 = \emptyset$. Then G is not levelable.
\end{lemma}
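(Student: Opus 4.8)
The plan is to argue by contradiction, imitating the computation carried out for $P_5$ in Example \ref{ex.P5}. Suppose $G$ were levelable with respect to a weight function $(c_1,\dots,c_n) \in \mathbb{N}_{>0}^n$, and let $c$ denote the associated independence weight, so that $\sum_{x_i \in W} c_i = c$ for every $W \in {\rm MaxInd}(G)$. Since $F_1,F_2,F_3,F_4 \in {\rm MaxInd}(G)$, each of these sets has total weight exactly $c$.

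First I would compute the total weight of the vertices in $F_3 \cup F_4$. Because $F_3 \cap F_4 = \emptyset$, additivity gives
\[\sum_{x_i \in F_3 \cup F_4} c_i = \sum_{x_i \in F_3} c_i + \sum_{x_i \in F_4} c_i = c + c = 2c.\]
Next I would bound the total weight of the vertices in $F_1 \cup F_2$ from above: by inclusion--exclusion,
\[\sum_{x_i \in F_1 \cup F_2} c_i = \sum_{x_i \in F_1} c_i + \sum_{x_i \in F_2} c_i - \sum_{x_i \in F_1 \cap F_2} c_i = 2c - \sum_{x_i \in F_1 \cap F_2} c_i \le 2c,\]
where the inequality uses that each $c_i \ge 1 > 0$.

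Finally, the hypothesis $F_3 \cup F_4 \subsetneq F_1 \cup F_2$ provides a vertex $x_j \in (F_1 \cup F_2) \setminus (F_3 \cup F_4)$; since $c_j > 0$, adding its weight gives the strict inequality $\sum_{x_i \in F_3 \cup F_4} c_i < \sum_{x_i \in F_1 \cup F_2} c_i$. Combining the three displays yields $2c < \sum_{x_i \in F_1 \cup F_2} c_i \le 2c$, a contradiction. Hence no such weight function exists, and $G$ is not levelable.

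I do not anticipate any real obstacle here: the argument is a short counting computation. The only point that needs care is the very last step, namely that the \emph{strict} set containment genuinely forces a \emph{strict} inequality of total weights; this is precisely where the positivity of the entries of the weight function is used, and it is the same phenomenon that forces $c_3 = 0$ in the $P_5$ example.
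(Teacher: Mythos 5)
Your proof is correct and follows essentially the same argument as the paper: disjointness gives the $F_3\cup F_4$ weight exactly $2c$, strict containment plus positivity gives a strict inequality against the $F_1\cup F_2$ weight, which is at most $2c$, yielding the contradiction. The only cosmetic difference is that you phrase the upper bound via inclusion--exclusion, whereas the paper simply bounds the weight of the union by the sum of the two weights.
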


\begin{proof}
    Suppose towards a contradiction that ($c_1,\dots, c_n$) is a weight function for $G$ with independence weight $c$. Then $\sum_{x_j \in F_1} c_j + \sum_{x_j \in F_2} c_j  = \sum_{x_j \in F_3} c_j + \sum_{x_j \in F_4} c_j =2c$. Since $c_j >0$ for all $j$, and $F_3 \cup F_4 \subsetneq F_1 \cup F_2$, we must have 
    \[
    \sum_{x_{j} \in F_3 \cup F_4} c_j < \sum_{x_{j} \in F_1 \cup F_2} c_j \leq \sum_{x_{j} \in F_1} c_j + \sum_{x_{j} \in F_2} c_j = 2c.
    \]
    But then since $F_3 \cap F_4 = \emptyset$, we have 
    \[ 2c = \sum_{x_{j} \in F_3} c_j + \sum_{x_{j} \in F_4} c_j = \sum_{x_{j} \in F_3 \cup F_4} c_j < 2c,
    \]
    which is impossible.
\end{proof}

This next result, which
can also be deduced from
Theorem \ref{thm.trees} given later in the paper, demonstrates how to apply Lemma \ref{lem.noncondition}.

\begin{corollary}\label{cor.paths}
    The path graph $P_n$ is levelable
    if and only if $n\in\{2,3,4\}$.
\end{corollary}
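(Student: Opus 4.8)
The plan is to prove the corollary by handling the two directions separately, using the tools already developed. For the ``if'' direction, I first observe that $P_2 = K_2$ and $P_3 = K_{1,2}$ are complete multipartite graphs, hence levelable by Corollary \ref{cor:multipartite}; alternatively, $P_2$ and $P_3$ are well-covered. For $P_4$, one checks directly that ${\rm MaxInd}(P_4) = \{\{x_1,x_3\},\{x_1,x_4\},\{x_2,x_4\}\}$, and then exhibits an explicit positive integer weight function: the weights $(c_1,c_2,c_3,c_4) = (1,2,2,1)$ give independence weight $c = 3$ on every maximal independent set, so $P_4$ is levelable. (One could also note $\alpha(P_4) = 2$ and invoke Theorem \ref{thm.smallalapha}, which is cleaner.)

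For the ``only if'' direction, I would show that $P_n$ fails to be levelable for every $n \geq 5$ by producing, for each such $n$, four maximal independent sets $F_1, F_2, F_3, F_4$ satisfying the hypotheses of Lemma \ref{lem.noncondition}: namely $F_3 \cap F_4 = \emptyset$ and $F_3 \cup F_4 \subsetneq F_1 \cup F_2$. The case $n = 5$ is essentially Example \ref{ex.P5}: take $F_1 = \{x_1,x_3,x_5\}$, $F_2 = \{x_2,x_4\}$ (so $F_1 \cup F_2$ is all of $V(P_5)$), and $F_3 = \{x_1,x_4\}$, $F_4 = \{x_2,x_5\}$, which are disjoint and whose union $\{x_1,x_2,x_4,x_5\}$ is a proper subset of $V(P_5)$. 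For general $n \geq 5$, the natural move is to use the same obstruction ``localized'' near the first five vertices: I expect one can take $F_3$ and $F_4$ to be maximal independent sets of $P_n$ that agree with $\{x_1,x_4\}$ and $\{x_2,x_5\}$ on $\{x_1,\dots,x_5\}$ and are chosen identically on $\{x_6,\dots,x_n\}$, while $F_1, F_2$ extend $\{x_1,x_3,x_5\}$ and $\{x_2,x_4\}$ to maximal independent sets using that same common choice on $\{x_6,\dots,x_n\}$; then $x_3$ lies in $F_1 \cup F_2$ but not in $F_3 \cup F_4$, giving the strict containment. Making ``the same choice on the tail'' precise is the one place requiring care.

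The main obstacle is therefore the bookkeeping in the general case $n \geq 5$: one must verify that the four sets constructed are genuinely maximal independent in $P_n$ (not just independent), and that the containment $F_3 \cup F_4 \subsetneq F_1 \cup F_2$ is strict. A clean way to sidestep most of this is to pick a single maximal independent set $S$ of the path $P_{n-5}$ on vertices $\{x_6,\dots,x_n\}$ subject only to $x_6 \notin S$ (which is possible since $n - 5 \geq 0$; when $n = 5$ take $S = \emptyset$, and when $n > 5$ one can greedily build such an $S$ starting from $x_7$), and then set $F_i = G_i \cup S$ where $G_1 = \{x_1,x_3,x_5\}$, $G_2 = \{x_2,x_4\}$, $G_3 = \{x_1,x_4\}$, $G_4 = \{x_2,x_5\}$ are the four sets from the $P_5$ analysis. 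Since $x_5 \notin G_3 \cup G_4$ but $x_5 \in G_1$, and $x_6 \notin S$, each $F_i$ is independent; maximality follows because $G_i$ is maximal in the induced path on $\{x_1,\dots,x_5\}$ with $x_5$'s status compatible with $x_6 \notin S$, and $S$ is maximal in the path on $\{x_6,\dots,x_n\}$ — more carefully, $F_i$ dominates every vertex because $G_i$ dominates $\{x_1,\dots,x_5\}$, $S$ dominates $\{x_7,\dots,x_n\}$, and $x_6$ is dominated by $x_5$ precisely in $G_1$ or $G_2$ and by $x_7$'s neighbor otherwise; this last point is exactly the delicate case split. Granting this, $F_3 \cup F_4 = \{x_1,x_2,x_4,x_5\} \cup S$ while $F_1 \cup F_2 = \{x_1,x_2,x_3,x_4,x_5\} \cup S$, so the containment is strict via $x_3$, and $F_3 \cap F_4 = \emptyset$ since $G_3 \cap G_4 = \emptyset$ and both avoid $S$ by construction; Lemma \ref{lem.noncondition} then gives that $P_n$ is not levelable, completing the proof.
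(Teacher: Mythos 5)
Your ``if'' direction is fine in substance ($P_2=K_2$ and $P_3=K_{1,2}$ via Corollary \ref{cor:multipartite}, and $P_4$ via well-coveredness or Theorem \ref{thm.smallalapha}), but the explicit weights you give for $P_4$ are wrong: with $(c_1,c_2,c_3,c_4)=(1,2,2,1)$ the maximal independent set $\{x_1,x_4\}$ has weight $2$, not $3$; since $P_4$ is well-covered, $(1,1,1,1)$ works. The genuine gaps are in the ``only if'' direction. First, with a common tail $S$ the sets $F_3=G_3\cup S$ and $F_4=G_4\cup S$ both \emph{contain} $S$, so $F_3\cap F_4=S\neq\emptyset$ for every $n\geq 7$; your assertion that ``both avoid $S$'' is false, and the hypothesis $F_3\cap F_4=\emptyset$ of Lemma \ref{lem.noncondition} fails, so the lemma does not apply as you invoke it. The idea is salvageable, but only by either (a) proving a strengthened lemma in which disjointness is replaced by $F_3\cap F_4\subseteq F_1\cap F_2$ (here both intersections equal $S$, and the same one-line inclusion--exclusion argument gives the contradiction), or (b) arguing directly as in Example \ref{ex.P5}: the contribution $\sum_{x_j\in S}c_j$ occurs once on each side of $\sum_{F_1}c_j+\sum_{F_2}c_j=\sum_{F_3}c_j+\sum_{F_4}c_j$, so it cancels and forces $c_3=0$. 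Second, the case $n=6$ falls through entirely: the path on $\{x_6\}$ has no maximal independent set avoiding $x_6$, and no common tail can exist, since independence of $F_1=\{x_1,x_3,x_5\}\cup S$ forces $x_6\notin S$ while maximality of $F_2=\{x_2,x_4\}\cup S$ forces $x_6\in S$. This case needs separate sets, e.g.\ $F_1=\{x_1,x_3,x_5\}$, $F_2=\{x_2,x_4,x_6\}$, $F_3=\{x_1,x_4,x_6\}$, $F_4=\{x_2,x_5\}$. (Also, $x_5$ lies in $G_1$ and $G_4$, not in ``$G_1$ or $G_2$''; this slip is harmless for $n\geq 7$ since $x_7\in S$ dominates $x_6$, but it matters exactly at $n=6$.)

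For comparison, the paper sidesteps both problems by exhibiting, for each parity of $n$, four explicit maximal independent sets on the whole vertex set: $F_1,F_2$ are the two alternating sets, and $F_3,F_4$ are a genuinely disjoint pair of shifted alternating sets whose union misses only $x_3$, so Lemma \ref{lem.noncondition} applies verbatim and $n=6$ is covered by the even case. Your ``localize the $P_5$ obstruction and append a common tail'' strategy is a legitimately different and arguably more conceptual route, but as written it rests on a false disjointness claim and omits $n=6$, so it does not yet constitute a proof.
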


\begin{proof}
    When $n=2$ or $4$, the graph $P_n$ is well-covered, and thus
    levelable.  When $n=3$, the weight function $(1,2,1)$
    makes $P_3$ levelable.

    Now suppose that $n \geq 5$.  Suppose that $n = 2m+1$
    is odd.  Then 
    \begin{eqnarray*}
    F_1 &= &\{x_1,x_3,\ldots,x_{2m-1},x_{2m+1}\}, ~~F_2 = \{x_2,x_4,\ldots,x_{2m}\},\\
    F_3 & =& \{x_1,x_4,x_6,\ldots,x_{2m}\} 
    ~~\mbox{and}~~
    F_4  =  \{x_2,x_5,x_7,\ldots,x_{2m+1}\}
    \end{eqnarray*}
    are all elements of ${\rm MaxInd}(G)$.  The result now
    follows from Lemma \ref{lem.noncondition}.

    Similarly, if $n =2m$ is even, we have $m \geq 3$, and
    we consider the sets:
    \begin{eqnarray*}
    F_1 &= &\{x_1,x_3,\ldots,x_{2m-1}\}, 
    F_2 = \{x_2,x_4,\ldots,x_{2m}\}, \\
    F_3 & =& \{x_1,x_4,x_6,\ldots,x_{2m}\} 
    ~~\mbox{and}~~
    F_4  =  \{x_2,x_5,x_7,\ldots,x_{2m-1}\}
    \end{eqnarray*}
    and again use Lemma \ref{lem.noncondition}.
\end{proof}

\begin{remark}
While our paper has focused on independent sets, 
we also could have of used the notion of vertex covers 
to define levelable graphs.
A subset $W \subseteq V(G)$ is called a {\it vertex cover} if $e \cap W 
\neq \emptyset$ for all $e \in E(G)$.     A vertex cover is the complement
of an independent set, that is, $W$ is a vertex
cover of $G$ if and only
if $V\setminus W$ is an independent set.  Maximal independent sets correspond
to minimal vertex covers.  

If we let ${\rm MinVC}(G)$ be the collection of minimal vertex covers, then
the following definition for levelable graphs is equivalent to Definition \ref{defn.levelable}: a graph $G$ is levelable if there exists
a weight function $(d_1,\ldots,d_n) \in \mathbb{N}^n_{> 0}$ and some constant $d$ such that
\[\sum_{x_i \in W} d_i = d ~~\mbox{for all $W \in {\rm MinVC}(G)$}.\]
To see that these definitions are equivalent, set $D = d_1+\cdots+d_n$. Then
\[ \sum_{x_i \in V\setminus W} d_i = D - \sum_{x_i \in W} d_i = D-d.\]
But as $W$ runs through all elements of ${\rm MinVC}(G)$, 
$V\setminus W$ runs through all elements of ${\rm MaxInd}(G)$.  So
$G$ is levelable with respect to Definition \ref{defn.levelable}. By
simply reversing this argument, we can show Definition \ref{defn.levelable}
is equivalent to this reformulation.
\end{remark}


\section{Constructions} 
In this section, we introduce basic techniques to construct a new levelable graph from a levelable graph.

\begin{definition}
Let $G$ be a finite graph and fix a vertex $x$ of $G$.  

\begin{itemize}
    \item 
The graph $G^x$, called the {\it duplication} of $G$ at $x$, is the finite graph obtained by adding a new vertex $y$ to $G$ with
\[
E(G^x) = E(G) \cup \{\{y, b\} : \{x, b\} \in E(G)\}.
\]
\item
The graph $G^{[x]}$, called the {\it expansion} of $G$ at $x$, is the finite graph obtained by adding a new vertex $y$ to $G$ with 
\[
E(G^{[x]}) = E(G^x) \cup \{\{x,y\}\}.
\]  
\end{itemize}
\end{definition}

\begin{theorem}
Suppose that $G$ is a levelable graph and fix a vertex $x$ of $G$.  Then both $G^x$ and $G^{[x]}$ are levelable.  
\end{theorem}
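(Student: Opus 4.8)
The plan is to understand how the maximal independent sets of $G^x$ and $G^{[x]}$ are related to those of $G$, and then transport a levelable weight function across. Fix a vertex $x$ of $G$, let $y$ be the new vertex, and observe first that $x$ and $y$ have the same neighborhood in $G^x$ (namely $N_G(x)$), so in $G^x$ the two vertices $x,y$ are non-adjacent ``twins.'' I claim that $W \mapsto W$ gives a bijection between those maximal independent sets of $G$ not containing $x$ and those maximal independent sets of $G^x$ containing neither $x$ nor $y$, while every maximal independent set of $G$ containing $x$ gives rise to the maximal independent set $(W \setminus \{x\}) \cup \{x,y\}$ of $G^x$ (since $x,y$ are twins, adding $y$ costs nothing and both must be included for maximality). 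The point is that in $G^x$, a maximal independent set either avoids both $x$ and $y$ or contains both. Consequently, if $(c_1,\dots,c_n)$ makes $G$ levelable with independence weight $c$ and $c_x$ is the weight at $x$, then assigning $y$ the weight $c_x$ as well makes $G^x$ levelable: a maximal independent set avoiding $\{x,y\}$ still sums to $c$, and one containing $\{x,y\}$ has the form $(W\setminus\{x\})\cup\{x,y\}$ with $W\in{\rm MaxInd}(G)$ containing $x$, which sums to $(c - c_x) + c_x + c_x = c + c_x$. That is off by $c_x$, so this naive weighting is \emph{not} quite right — this is the main obstacle, and it is handled by first scaling.

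The fix: replace the weight function on $G$ by $(2c_1,\dots,2c_n)$, which still makes $G$ levelable (now with independence weight $2c$). Now give $y$ the weight $c_x$ — wait, that breaks positivity/integrality only if $c_x$ is odd, so instead scale so that $x$ receives an even weight; concretely, use the weight function obtained from $(c_1,\dots,c_n)$ by doubling every entry, so $x$ has weight $2c_x$, and then I want $y$ to contribute the ``deficit'' that makes the two types of maximal independent set agree. A cleaner route: on $G^x$, use weights $c_i$ on the vertices of $G$ for $i \neq x$, weight $a$ on $x$, and weight $b$ on $y$, subject to: maximal independent sets avoiding $\{x,y\}$ sum to $c$ (unchanged, fine since those are exactly the $W \in {\rm MaxInd}(G)$ with $x \notin W$, and they sum to $c - c_x + $ nothing $= c - c_x$... ) — here one must be careful, so the right bookkeeping is: set the new weight of $x$ to be $a$ and of $y$ to be $b$ with $a + b$ chosen so that maximal independent sets through $\{x,y\}$ balance against those avoiding it. Since a maximal independent set avoiding $\{x,y\}$ in $G^x$ corresponds to $W \in {\rm MaxInd}(G)$ with $x \notin W$ (sum $c$ using the original weights, with $x$'s weight irrelevant) and one through $\{x,y\}$ corresponds to $W \in {\rm MaxInd}(G)$ with $x \in W$ (original sum $c$, contributing $c - c_x$ from $W \setminus \{x\}$, plus $a + b$), we need $c - c_x + a + b = c$, i.e. $a + b = c_x$. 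Taking $a = b$ fails integrality when $c_x$ is odd, so first double all original weights: work with $(2c_1,\dots,2c_n)$ on $G$, independence weight $2c$, the weight at $x$ being $2c_x$; then set $a = b = c_x$ on $x$ and $y$ in $G^x$, which are positive integers, and $2c_x \neq$ … no: we need $a+b = 2c_x$ with the doubled weights, so $a = b = c_x$ works, $a$ replaces the old $2c_x$ at $x$. All values are positive integers, and every maximal independent set of $G^x$ now sums to $2c$, so $G^x$ is levelable.

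For $G^{[x]}$: the only new edge is $\{x,y\}$, so relative to $G^x$ we have forbidden exactly the independent sets containing both $x$ and $y$. Hence ${\rm MaxInd}(G^{[x]})$ consists of (i) the maximal independent sets of $G$ not containing $x$ (these remain maximal, as $y$ is adjacent to $N_G(x)$ and now also to $x$; since $W$ already meets $N_G(x)$ or could be extended — one checks $W \cup \{y\}$ may be forced, so actually the maximal independent sets avoiding $x$ become $W \cup \{y\}$), together with (ii) the maximal independent sets $W$ of $G$ containing $x$ (these stay maximal in $G^{[x]}$ since $y$ is now adjacent to $x \in W$). So in $G^{[x]}$ a maximal independent set contains exactly one of $x,y$: type (ii) contains $x$ and has a corresponding $W \in {\rm MaxInd}(G)$ with $x \in W$; type (i) contains $y$ (in place of $x$) and corresponds to $W \in {\rm MaxInd}(G)$ with $x \notin W$. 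Using the doubled weights $(2c_1,\dots,2c_n)$ on $G$ and giving $y$ the weight $2c_x$, a type (ii) set sums to $2c$ and a type (i) set sums to $(2c - 0) + 2c_x - $ … concretely $W \not\ni x$ contributes $2c$ via original weights with nothing at $x$, then we add $2c_x$ at $y$, giving $2c + 2c_x$; mismatch again by $2c_x$, so instead give $y$ the weight equal to the common value minus the sum over a type-(i) $W$, i.e. set $y$'s weight to $0$ — not allowed. The correct bookkeeping: a type-(i) maximal independent set of $G^{[x]}$ is $W$ with $x \notin W$ (its original weighted sum is $2c$ only if $x$ never contributed, which is true) \emph{plus} $y$; for this to equal $2c$ we'd need $y$ to have weight $0$. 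So rather, we recognize type (i) corresponds to a \emph{different} invariant: $\sum_{z \in W} 2c_z$ over $W \in {\rm MaxInd}(G)$ with $x \notin W$ equals $2c$ (the full independence weight), because $x$ contributing nothing is already accounted for. Thus give $y$ weight $2c_x$ and instead change the weight of $x$: no — the clean solution is that $G$ levelable already forces $\sum_{z\in W} 2c_z = 2c$ for \emph{every} maximal independent set, whether or not it contains $x$; in type (i) we have exactly such a $W$ (with $x\notin W$) and we append $y$, so to keep the sum at $2c$ set $y$'s weight to $0$ — impossible, confirming we must instead pick $y$'s weight $w_y$ and accept the new independence weight $2c + w_y$, while type (ii) sets (containing $x$, not $y$) sum to $2c$; these disagree unless every type-(ii) set is also forced to gain $w_y$, which it isn't. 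The resolution is to observe type (ii) maximal independent sets of $G^{[x]}$, when we \emph{also} regard them inside $G^x$, differ from the type-$\{x,y\}$ sets of $G^x$ by deleting $y$; so run the $G^x$ construction but assign $x$ weight $a$, $y$ weight $b$ with $b$ large: in $G^{[x]}$ we then need (type i) $2c_{(W,x\notin W)} + b = $ (type ii) $(2c - 2c_x) + a$; since the left original part is $2c$ (as $x\notin W$ contributes $0$, consistent with levelability) we get $2c + b = 2c - 2c_x + a$, i.e. $a = b + 2c_x$. Choose $b = 2c_x$, $a = 4c_x$: positive integers, common sum $4c_x + 2c - 2c_x = 2c + 2c_x$. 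Hence $G^{[x]}$ is levelable. I expect the combinatorial identification of ${\rm MaxInd}(G^x)$ and ${\rm MaxInd}(G^{[x]})$ in terms of ${\rm MaxInd}(G)$ — in particular verifying maximality in each case — to be the only genuinely delicate point; the weight adjustments are then forced linear algebra, with the doubling trick ensuring integrality and positivity.
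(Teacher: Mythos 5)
Your treatment of the duplication $G^x$ is correct and is essentially the paper's argument: you identify that $x$ and $y$ are non-adjacent twins, so a maximal independent set of $G^x$ contains both of $x,y$ or neither, and your final weighting (double every original weight except give $x$ and $y$ each the original weight $c_x$) is exactly the weight function $(2c_1,\ldots,2c_{n-1},c_n,c_n)$ used in the paper, with independence weight $2c$. The exposition meanders, but the conclusion for $G^x$ is sound.

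The expansion $G^{[x]}$, however, contains a genuine error: your classification of ${\rm MaxInd}(G^{[x]})$ is wrong. You claim that a maximal independent set of $G$ avoiding $x$ ``becomes $W\cup\{y\}$'' and that every maximal independent set of $G^{[x]}$ contains exactly one of $x,y$. Neither holds. If $W\in{\rm MaxInd}(G)$ with $x\notin W$, then maximality in $G$ forces $W$ to contain a neighbor of $x$, and that vertex is also a neighbor of $y$ in $G^{[x]}$; hence $W\cup\{y\}$ is not even independent, and $W$ itself is already maximal in $G^{[x]}$, containing neither $x$ nor $y$. The correct description is: ${\rm MaxInd}(G^{[x]})$ consists of all of ${\rm MaxInd}(G)$ (both the sets containing $x$ and those avoiding it) together with the sets $(W\setminus\{x\})\cup\{y\}$ for $W\in{\rm MaxInd}(G)$ with $x\in W$. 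Because $W$ (with $x\in W$) and $(W\setminus\{x\})\cup\{y\}$ are both maximal, any levelable weighting of $G^{[x]}$ must give $x$ and $y$ equal weight, whereas your forced relation $a=b+2c_x$ makes them unequal; concretely, with your weights ($2c_i$ off $\{x,y\}$, $a=4c_x$ on $x$, $b=2c_x$ on $y$) the three types of maximal independent sets sum to $2c$, $2c+2c_x$, and $2c$ respectively, so the weighting fails (already for $G=P_3$ with $x$ the middle vertex). The repair is the paper's: keep the original weights $(c_1,\ldots,c_n)$ on $V(G)$ and give $y$ the weight $c_x$; then sets of all three types sum to $c$, and no doubling is needed for the expansion.
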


\begin{proof}
Let $G$ be a levelable graph on $V = \{x_1, \ldots, x_n\}$ with respect to a weight function $(c_1, \ldots, c_n)$.  Set $x=x_n$ and $y=x_{n+1}$.

Let $W$ be a maximal independent set  of $G^x$.  If $x \not\in W$, then $W$ is a maximal independent set of $G$.  If $x \in W$, then $y \in W$ and $W \setminus \{y\}$ is a maximal independent set of $G$.  Hence $G^x$ is levelable with respect to the weight function
\[
(2c_1,\cdots, 2c_{n-1}, c_n, c_n).
\]

A maximal independent set $W$ of $G^{[x]}$ is either a maximal independent set of $G$ or $(W \setminus \{x\}) \cup \{y\}$, where $W$ is a maximal independent set of $G$ with $x \in W$.
Hence $G^{[x]}$ is levelable with respect to the weight function
\[
(c_1,\cdots, c_{n-1}, c_n, c_n).
\]
\end{proof}

\begin{example}
The pentagon is levelable since it is a well-covered graph.  Then repeated applications of the duplication technique shows that the graph of Figure \ref{fig:Pentagon} is levelable.
\end{example}
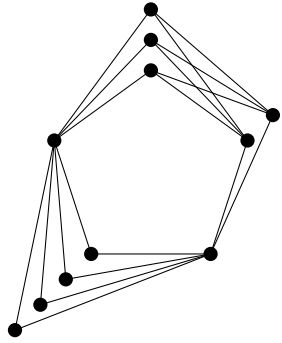
\begin{figure}[h]
\centering
\begin{tikzpicture}[scale=1.35]
\coordinate (A) at (0,1);
\coordinate (B) at ({cos(9*pi/10 r)},{sin(9*pi/10 r)});
\coordinate (C) at ({cos(13*pi/10 r)},{sin(13*pi/10 r)});
\coordinate (D) at ({cos(17*pi/10 r)},{sin(17*pi/10 r)});
\coordinate (E) at ({cos(21*pi/10 r)},{sin(21*pi/10 r)});
\coordinate (F) at ($(A)+(0,0.3)$);
\coordinate (G) at ($(F)+(0,0.3)$);
\coordinate (H) at ($(E)+(0.25,0.25)$);
\coordinate (I) at ($(C)-(0.25,0.25)$);
\coordinate (J) at ($(I)-(0.25,0.25)$);
\coordinate (K) at ($(J)-(0.25,0.25)$);
\fill(A)circle(0.7mm);
\fill(B)circle(0.7mm);
\fill(C)circle(0.7mm);
\fill(D)circle(0.7mm);
\fill(E)circle(0.7mm);
\fill(F)circle(0.7mm);
\fill(G)circle(0.7mm);
\fill(H)circle(0.7mm);
\fill(I)circle(0.7mm);
\fill(J)circle(0.7mm);
\fill(K)circle(0.7mm);
\draw (A)--(B)--(C)--(D)--(E)--cycle;
\draw (B)--(G)--(H)--(D)--(K)--cycle;
\draw (B)--(F)--(H);
\draw (G)--(E);
\draw (A)--(H);
\draw (F)--(E);
\draw (B)--(I)--(D);
\draw (B)--(J)--(D);
\end{tikzpicture}
\caption{A pentagon with duplications} \label{fig:Pentagon}
\end{figure} 

\begin{corollary}
    Let $G$ be a levelable graph on $\{x_1, \ldots, x_n\}$ with respect to a weight function $(c_1, c_2, \ldots, c_n)$.  Then for any positive
    integers $r_1,\ldots,r_n$, there is a graph $G'$ which is levelable with respect to the weight function 
    \begin{equation}
    \label{WWWWW}
    (\underbrace{c_1, c_1, \ldots, c_1}_{r_1}, 
    \underbrace{c_2, c_2, \ldots, c_2}_{r_2}, 
    \ldots, 
    \underbrace{c_n, c_n, \ldots, c_n}_{r_n}). 
        \end{equation}
\end{corollary}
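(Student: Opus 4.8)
The plan is to construct $G'$ by repeatedly applying the expansion operation $G \mapsto G^{[x]}$ from the preceding theorem, \emph{not} the duplication operation. The crucial feature, visible in the proof of that theorem, is that if $G$ is levelable with respect to $(c_1,\dots,c_n)$ and $x=x_n$, then $G^{[x]}$ is levelable with respect to $(c_1,\dots,c_{n-1},c_n,c_n)$: every old weight is preserved, and the unique new vertex inherits the weight $c_n$ of $x$. This is exactly what is needed to reach the weight function in \eqref{WWWWW} on the nose; the duplication operation, which rescales all the remaining weights, would not. Although the theorem is phrased for the last vertex $x_n$, after a harmless relabeling it applies to expansion at any vertex of any levelable graph, and levelability of a graph equipped with a weight vector is plainly invariant under simultaneously permuting the vertices and the entries of the vector.

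With this in hand I would argue inductively. Put $G_0 = G$, and for $i=1,\dots,n$ let $G_i$ be obtained from $G_{i-1}$ by performing the expansion at the vertex $x_i$ exactly $r_i-1$ times (doing nothing if $r_i=1$). This makes sense because the expansion operation only adds vertices, so $x_i$ survives in $G_{i-1}$ and in each intermediate graph, and because the theorem's output is again a levelable graph, so the theorem may be reused at every step. Tracking the weight functions: in round $i$, the $r_i-1$ new vertices each receive weight $c_i$ and all previously assigned weights are untouched; together with the original $x_i$ this produces exactly $r_i$ vertices of weight $c_i$, and later rounds leave these weights unchanged.

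Therefore $G' := G_n$ is levelable, and after reordering its vertices so that the $r_i$ vertices of weight $c_i$ are listed consecutively for each $i$, its weight function is precisely the vector in \eqref{WWWWW}. (Explicitly, $G'$ is $G$ with each vertex $x_i$ blown up into a clique on $r_i$ vertices, two such cliques joined completely whenever the corresponding vertices of $G$ are adjacent, though we will not need this description.) The argument is mostly bookkeeping; the one genuine decision is to use expansions rather than duplications, precisely so that no weight is ever rescaled, and the only point to check is that the preceding theorem can legitimately be iterated, which it can since it takes levelable graphs to levelable graphs.
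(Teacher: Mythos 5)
Your proposal is correct and follows exactly the paper's own argument: the paper's proof likewise performs $r_i-1$ expansions at each vertex $x_i$, using that the expansion operation $G^{[x]}$ preserves all existing weights and assigns the new vertex the weight of $x$. Your additional remarks (why duplication would not work, the relabeling, and the clique-blow-up description of $G'$) are just the bookkeeping the paper leaves implicit.
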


\begin{proof}
Let $r_i$ be the number of $c_i$ in (\ref{WWWWW}).  Then doing
$r_i -1$ expansions of each vertex $x_i$ 
yields the desired graph $G'$.   
\end{proof}

\begin{remark}
Let $G$ be a graph consisting of $n$ isolated vertices.  Since $\{x_1, \ldots, x_n\}$ is the unique maximal independent set of $G$, it follows that $G$ is levelable with respect to any weight function $(c_1, \ldots, c_n)$.  In particular, given any sequence $(c_1, \ldots, c_n)$ of positive integers, there is a levelable graph on $\{x_1, \ldots, x_n\}$ with respect to $(c_1, \ldots, c_n)$.  Let $n=3$.  Then the connected graphs are the triangle and the path of length $2$.  Hence given a sequence $(c_1, c_2, c_3)$ of positive integers with $c_1 \leq c_2 \leq c_3$, there is a connected levelable graph on $\{x_1, x_2, x_3\}$ with respect to $(c_1, c_2, c_3)$ if and only if either $c_1=c_2=c_3$ or $c_3 = c_1 + c_2$.  It would, of course, be of interest to classify all possible weight functions for connected levelable graphs. Note that if $G$ is a levelable graph  on $\{x_1, \ldots, x_n\}$ with respect to a weight function $(c_1, \ldots, c_n)$, then $G$ is also levelable with respect to the weight function $(mc_1, \ldots, mc_n)$, where $m$ is a positive integer.
\end{remark}

\begin{definition}
\label{defGH}
Let $G$ be a finite connected graph on $V(G) = \{x_1, \ldots, x_n\}$.  Given finite graphs $H_1,\ldots, H_n$ with each $V(H_i) = \{z^{(i)}_1, \ldots, z^{(i)}_{r_i} \}$, where $V(G), V(H_1), \ldots, V(H_n)$ are pairwise disjoint, we introduce the finite graph $G(H_1, \ldots, H_n)$ on $V(G) \cup V(H_1) \cup \cdots \cup V(H_n)$ whose set of edges is
\[
E(G) \cup E(H_1) \cup \cdots \cup E(H_n) \cup \left(\,\bigcup_{\substack{1 \leq i \leq n \\ 1 \leq j \leq r_i}}\{\{x_i, z^{(i)}_j \}\}\right).
\] 
\end{definition}

\begin{theorem}
\label{Sapporo}
With the notation as in Definition \ref{defGH}, the graph $G(H_1, \ldots, H_n)$ is levelable if and only if each $H_i$ is levelable. 

In particular, if each $H_i$ is a complete graph $K_{r_i}$, then $G(H_1, \ldots, H_n)$ is levelable.  
\end{theorem}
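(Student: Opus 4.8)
The plan is to analyze the structure of maximal independent sets of $G(H_1,\ldots,H_n)$ in terms of those of the $H_i$ together with independent sets of $G$. The key combinatorial observation is this: pick any maximal independent set $W$ of $G(H_1,\ldots,H_n)$. Let $W_G = W \cap V(G)$. For each $i$ with $x_i \in W_G$, no vertex $z^{(i)}_j$ can lie in $W$ (since $\{x_i, z^{(i)}_j\}$ is an edge), so $W \cap V(H_i) = \emptyset$. For each $i$ with $x_i \notin W_G$, the vertex $x_i$ has a neighbour in $W$; since $W$ is maximal, $W \cap V(H_i)$ must be a maximal independent set of $H_i$ (it is independent in $H_i$, and if some $z^{(i)}_j$ could be added, we could add it to $W$, because $z^{(i)}_j$'s only neighbours outside $H_i$ is $x_i \notin W$). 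Conversely, $W_G$ must itself be a maximal independent set of $G$: it is independent, and it cannot be enlarged by an $x_i$ because then $x_i$ would have had a neighbour in $W$, but all of $x_i$'s neighbours are either in $V(G)$ (not in $W_G$ by construction, contradiction with maximality in this direction needs care) — more precisely, one shows $W_G \in \mathrm{MaxInd}(G)$ by noting that the only vertices of $G(H_1,\ldots,H_n)$ adjacent to $V(G)$-vertices not already blocking are handled by the $H_i$ components. So
\[
\mathrm{MaxInd}(G(H_1,\ldots,H_n)) = \left\{ S \cup \bigcup_{x_i \notin S} Y_i ~\Big|~ S \in \mathrm{MaxInd}(G),~ Y_i \in \mathrm{MaxInd}(H_i) \text{ for } x_i \notin S \right\}.
\]

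For the \textbf{(``if'')} direction, suppose each $H_i$ is levelable with weight function $(d^{(i)}_1,\ldots,d^{(i)}_{r_i})$ and independence weight $e_i$. I would assign weight $\lambda_i$ to $x_i$ and weight $\mu_i d^{(i)}_j$ to $z^{(i)}_j$, where $\lambda_i$ and $\mu_i$ are positive integers to be chosen. Given the description above, the total weight of a maximal independent set corresponding to $S \in \mathrm{MaxInd}(G)$ is $\sum_{x_i \in S}\lambda_i + \sum_{x_i \notin S}\mu_i e_i$. For this to be independent of $S$, it suffices that $\lambda_i = \mu_i e_i$ for every $i$: then the sum collapses to $\sum_{i=1}^n \mu_i e_i$ regardless of $S$. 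So take $\mu_i = 1$ and $\lambda_i = e_i$ for all $i$ — all positive integers — and $G(H_1,\ldots,H_n)$ is levelable with weight function obtained by putting $e_i$ on each $x_i$ and $d^{(i)}_j$ on each $z^{(i)}_j$. The ``in particular'' claim follows since $K_{r_i}$ is well-covered, hence levelable (with weight function $(1,\ldots,1)$ and independence weight $1$), so one can simply put weight $1$ on every vertex of every $K_{r_i}$ and weight $1$ on every $x_i$ — i.e. $G(K_{r_1},\ldots,K_{r_n})$ is in fact well-covered.

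For the \textbf{(``only if'')} direction, suppose $G(H_1,\ldots,H_n)$ is levelable with weight function assigning $\lambda_i$ to $x_i$ and $\nu^{(i)}_j$ to $z^{(i)}_j$, with independence weight $c$. Fix an index $k$. Since $G$ is connected (hence $x_k$ has at least one neighbour $x_\ell$ in $G$, assuming $n \geq 2$; the case $n=1$ is trivial since then $G(H_1) = H_1$ with an extra isolated-from-$H_1$ apex... actually one checks $n=1$ directly), choose a maximal independent set $S$ of $G$ with $x_k \notin S$. Then for any $Y_k \in \mathrm{MaxInd}(H_k)$, the set $S \cup \bigcup_{x_i \notin S} Y_i$ (with $Y_k$ in the role of the $k$-th piece) is a maximal independent set of $G(H_1,\ldots,H_n)$; varying only $Y_k$ while freezing $S$ and all other $Y_i$, the constancy of total weight forces $\sum_{z^{(k)}_j \in Y_k}\nu^{(k)}_j$ to be constant over $Y_k \in \mathrm{MaxInd}(H_k)$. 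Hence $(\nu^{(k)}_1,\ldots,\nu^{(k)}_{r_k})$ — all positive integers — is a levelling weight function for $H_k$.

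The main obstacle, and the step requiring the most care, is verifying the structural description of $\mathrm{MaxInd}(G(H_1,\ldots,H_n))$ cleanly — in particular the claim that $W_G \in \mathrm{MaxInd}(G)$ and that one can always find, for any prescribed $x_k$, a maximal independent set $S$ of $G$ omitting $x_k$ (true because $x_k$ has a neighbour in the connected graph $G$, so $x_k$ alone is never a maximal independent set of $G$ unless $n = 1$; one handles $n=1$ as a trivial separate case since then the apex $x_1$ is adjacent to every vertex of $H_1$ and $\mathrm{MaxInd}(G(H_1)) = \{\{x_1\}\} \cup \mathrm{MaxInd}(H_1)$, and one checks levelability directly or notes $G(H_1) = H_1^{[\,\cdot\,]}$-type construction). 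One also needs to double-check the maximality argument for $W \cap V(H_i)$ when $x_i \notin W_G$: a vertex $z^{(i)}_j \notin W$ is blocked from being added either by a neighbour inside $H_i$ already in $W$ or by $x_i \in W$; since $x_i \notin W$, it must be the former, so $W \cap V(H_i)$ is indeed maximal in $H_i$. Once this dictionary is in hand, both directions are the short computations sketched above.
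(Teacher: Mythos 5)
Your proof hinges on the structural claim
\[
{\rm MaxInd}(G(H_1,\ldots,H_n)) = \Bigl\{\, S \cup \textstyle\bigcup_{x_i \notin S} Y_i \;:\; S \in {\rm MaxInd}(G),\ Y_i \in {\rm MaxInd}(H_i) \,\Bigr\},
\]
and in particular on the assertion that $W_G = W \cap V(G)$ is a \emph{maximal} independent set of $G$ for every $W \in {\rm MaxInd}(G(H_1,\ldots,H_n))$. That assertion is false, and you half-noticed it yourself (``contradiction with maximality in this direction needs care''): if $x_i \notin W_G$, the neighbour of $x_i$ in $W$ forced by maximality of $W$ may lie in $V(H_i)$ rather than in $W_G$, so $W_G \cup \{x_i\}$ can still be independent in $G$. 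Concretely, take $G = P_2$ on $\{x_1,x_2\}$ and $H_1 = H_2 = K_1$; then $G(H_1,H_2) \cong P_4$ and $W = \{z^{(1)}_1, z^{(2)}_1\}$ is a maximal independent set whose trace on $G$ is $\emptyset \notin {\rm MaxInd}(G)$. The correct dictionary — the one the paper's proof states — is that a maximal independent set of $G(H_1,\ldots,H_n)$ is an \emph{arbitrary} independent set $W_0$ of $G$ together with a maximal independent set of $H_i$ for each $i$ with $x_i \notin W_0$. Because your ``if'' direction verifies constancy of the weight only over the subfamily indexed by $S \in {\rm MaxInd}(G)$, it does not, as written, cover all maximal independent sets (e.g.\ the one above), so that direction is incomplete.

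The damage is repairable and localized: your weight function (weight $e_i$ on $x_i$, weights $d^{(i)}_j$ on $V(H_i)$) is exactly the paper's choice, and the computation $\sum_{x_i \in S} e_i + \sum_{x_i \notin S} e_i = \sum_{i=1}^n e_i$ is indifferent to whether $S$ is maximal, so replacing ``$S \in {\rm MaxInd}(G)$'' by ``$S$ an independent set of $G$'' throughout fixes the ``if'' direction and the ``in particular'' claim (your observation that $G(K_{r_1},\ldots,K_{r_n})$ is well-covered is correct). Your ``only if'' direction survives intact, since it only uses the true inclusion that sets of the form $S \cup \bigcup_{x_i\notin S} Y_i$ are maximal independent, together with the existence of such a set omitting a given $x_k$; note, though, that by using the correct dictionary you could take $S = \emptyset$ (as the paper effectively does) and dispense with the connectivity/$n\geq 2$ discussion and the separate $n=1$ case.
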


\begin{proof}
Let $\Gamma = G(H_1, \ldots, H_n)$.  A maximal independent set of $\Gamma$ consists of an independent set $W_0$ of $G$ together with maximal independent sets $W_i$ of $H_i$ for each $i$ such that $x_i \not\in W_0$.

{\bf (``If'')}  
Suppose that each $H_i$ is levelable with respect to a weight function \[(c(z^{(i)}_1), \ldots, c(z^{(i)}_{r_i})).\]  Let $d_i = \sum_{z^{(i)}_j \in W} c(z^{(i)}_j)$ be its independence weight, where $W$ is a maximal independent set of $H_i$.  Let $c(x_i) = d_i$ for $1 \leq i \leq n$. It can be checked that $\Gamma$ is levelable with respect to the weight function 
\begin{eqnarray}
    \label{ccccc}
(c(x_1), \ldots, c(x_n), c(z_1^{(1)}), \ldots, c(z_{r_n}^{(n)})).
\end{eqnarray}

{\bf (``Only If'')}
Suppose that $\Gamma$ is levelable with respect to a weight function of the form (\ref{ccccc}). Let $W$ be a maximal independent set of $\Gamma$ with $x_i \not\in W$ and $W_i$ a maximal independent set of $H_i$ with $W_i \subseteq W$.  Let $W'_i$ be a maximal independent set of $H_i$ with $W_i \neq W_i'$.  Since $(W \setminus W_i) \cup W'_i$ is a maximal independent set of $\Gamma$, it follows that $H_i$ is levelable with respect to the weight function $(c(z^{(i)}_1), \ldots, c(z^{(i)}_{r_i}))$.
\end{proof}

\begin{corollary}
\label{cor:AAA}
Given positive integers $c_1, \ldots, c_n$ and
positive integers $r_1,\ldots,r_n$ with $r_i \geq 2$ for all $i$,
there is a connected levelable graph on 
$r_1+\cdots +r_n$ vertices
with respect to the weight function 
\[
(\underbrace{c_1, \ldots,c_1}_{r_1}, \underbrace{c_2, 
\ldots, c_2}_{r_2}, \ldots, \underbrace{c_n, \ldots, c_n}_{r_n}).
\]
\end{corollary}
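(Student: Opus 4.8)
The plan is to realize the desired graph as an instance of the $G(H_1,\dots,H_n)$ construction from Definition~\ref{defGH}, applied to a single starting graph $G$ of an appropriate size, with each $H_i$ chosen to be a complete graph. Concretely, I would take $G$ to be the complete graph $K_n$ on vertices $\{x_1,\dots,x_n\}$ (any connected graph on $n$ vertices would do, but $K_n$ is cleanest), and for each $i$ set $H_i = K_{r_i}$, which is legitimate since we are given $r_i \geq 2$ (so each $H_i$ is a genuine nonempty graph — in fact even $r_i = 1$ would be fine, but the hypothesis is there presumably to force each "bunch" to be attached to a vertex that can be excluded from a maximal independent set without collapsing). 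By Theorem~\ref{Sapporo}, since each $H_i = K_{r_i}$ is levelable (it is even well-covered, being a complete graph), the graph $\Gamma = G(H_1,\dots,H_n)$ is levelable.

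The second step is to identify the weight function that Theorem~\ref{Sapporo} produces and check it has the required shape. Each $H_i = K_{r_i}$ is levelable with respect to the all-ones weight function $(1,\dots,1)$, and its independence weight is $d_i = 1$ (every maximal independent set of $K_{r_i}$ is a single vertex). So the weight function \eqref{ccccc} from the proof of Theorem~\ref{Sapporo} assigns weight $c(x_i) = d_i = 1$ to each $x_i$ and weight $1$ to each $z^{(i)}_j$; that is, $\Gamma$ is levelable with the all-ones function on $n + (r_1+\cdots+r_n)$ vertices. This is \emph{not} quite what we want: we need the weights on the $j$-th block to all equal $c_i$, and we also want to end up on exactly $r_1+\cdots+r_n$ vertices, not $n$ more than that. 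To fix the weights, I invoke the scaling observation recorded in the Remark following the pentagon example: if a graph is levelable with respect to $(c_1,\dots,c_m)$ then it is levelable with respect to $(mc_1,\dots,mc_m)$ for any positive integer $m$ — but here I actually need \emph{different} scalars on different blocks, so instead I should feed Theorem~\ref{Sapporo} the correctly scaled input from the start: take $H_i = K_{r_i}$ with weight function $(c_i, c_i, \dots, c_i)$ ($r_i$ times), which is still a valid levelable weighting of $K_{r_i}$ with independence weight $d_i = c_i$. Then \eqref{ccccc} gives $\Gamma$ levelable with respect to $(c_1,\dots,c_n, c_1,\dots,c_1, c_2,\dots,c_2,\dots,c_n,\dots,c_n)$ where the first $n$ entries are the weights on $x_1,\dots,x_n$.

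The remaining discrepancy is the $n$ extra vertices $x_1,\dots,x_n$ of $G$ carrying weights $c_1,\dots,c_n$. To absorb them, I would simply fold one vertex of each $H_i$'s block into the role of $x_i$: more precisely, rather than using $K_n$ as the base, let the base graph $G$ itself be a single vertex, so $n=1$ in the construction, and iterate — or, more directly, observe that since $r_i \geq 2$ we may replace the construction by taking $G = K_n$ and $H_i = K_{r_i - 1}$, giving a graph on $n + \sum(r_i-1) = \sum r_i$ vertices; here the $i$-th "super-vertex" $x_i$ together with its pendant clique $H_i$ behaves like a copy of $K_{r_i}$ attached appropriately, and one checks directly from the description "a maximal independent set of $\Gamma$ is an independent set $W_0$ of $K_n$ plus maximal independent sets of those $H_i$ with $x_i \notin W_0$" that assigning weight $c_i$ to $x_i$ and to every vertex of $H_i$ makes every maximal independent set have total weight $c_1 + \cdots + c_n$ (an independent set of $K_n$ is a single $x_i$, contributing $c_i$ from $x_i$ and $c_k$ from a vertex of $H_k$ for each $k \neq i$). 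This yields a connected levelable graph on exactly $r_1 + \cdots + r_n$ vertices with the stated weight function.

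The main obstacle is purely bookkeeping: making the vertex count come out to exactly $r_1+\cdots+r_n$ while keeping the per-block weights uniform, which is why the hypothesis $r_i \geq 2$ is needed (it guarantees $H_i = K_{r_i-1}$ is nonempty so the attachment in Definition~\ref{defGH} makes sense). There is no real mathematical difficulty beyond correctly bookkeeping which vertices get which weights and re-checking the maximal-independent-set total in this slightly modified setup; all the substantive work is already done in Theorem~\ref{Sapporo} and Corollary~\ref{cor:multipartite}.
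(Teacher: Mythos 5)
Your final construction is correct and is essentially the paper's proof: attach the clique $K_{r_i-1}$ to the $i$-th vertex of a connected base graph via Theorem~\ref{Sapporo}, note that every maximal independent set meets each block $\{x_i\}\cup V(K_{r_i-1})$ in exactly one vertex, and weight the whole block by $c_i$. The only difference is your choice of $K_n$ rather than $P_n$ as the base graph, which is immaterial to the argument (just make sure to also note the maximal independent set arising from $W_0=\emptyset$, whose weight is likewise $c_1+\cdots+c_n$).
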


\begin{proof}
Let $P_n$ be the path on $\{x_1, \ldots, x_n\}$ and let
$K_{r_i-1}$ be the  complete graph on the vertex set
$\{z_1^{(i)},\ldots,z_{r_i-1}^{(i)}\}$.
By Theorem \ref{Sapporo}, the graph $G = P_n(K_{r_1-1},\ldots,K_{r_n-1})$
is levelable.
A maximal independent set of $G$ is of the form $A \cup B$, 
where $A$ is an independent set of $P_n$, $B$ is a subset
of $\bigcup_{i=1}^n\{z_1^{(i)},\ldots,z_{r_i-1}^{(i)}\}$,
and for each $x_i \not\in A$,  $|B \cap \{z_1^{(i)},\ldots,z_{r_i-1}^{(i)}\}|=1$,
that is, $B$ contains exactly one of the vertices of $K_{r_i-1}$.
In particular, $G$ is well-covered.  Because 
each maximal independent set of $G$ contains exactly
one vertex of $\{x_i,z_1^{(i)},\ldots,z_{r_i-1}^{(i)}\}$ for
each $i$,
the graph $G$ is then levelable with respect to the weight function
\begin{multline*}
(c(x_1), c(z_1^{(1)}),\ldots,c(z_{r_1-1}^{(1)}),c(x_2),
c(z_1^{(2)}),\ldots,c(z_{r_2-1}^{(2)}),\ldots,
c(x_n),
c(z_1^{(n)}),\ldots,c(z_{r_n-1}^{(n)})) \\
= 
(\underbrace{c_1, \ldots,c_1}_{r_1}, \underbrace{c_2, 
\ldots, c_2}_{r_2}, \ldots, \underbrace{c_n, \ldots, c_n}_{r_n}).
\end{multline*}
\end{proof}

\begin{remark}
    In the proof of Corollary \ref{cor:AAA}, since $G$ is well-covered, it follows that $G$ is also levelable with respect to the weight function $(1,\ldots, 1)$.
\end{remark}

\begin{corollary}
\label{cor:BBB}
Given positive integers $c_1, \ldots, c_n$, there is a connected levelable graph on $V=\{x_1, \ldots, x_n, \ldots, x_N\}$ with respect to the weight function
\[
(c_1, \ldots, c_n, 1, \ldots, 1) \in \ZZ^{N},
\]
 where 
\[
N = n + \sum_{i=1}^{n} c_i.
\]
\end{corollary}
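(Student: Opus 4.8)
The plan is to obtain the claimed weight function as an instance of Theorem~\ref{Sapporo}. Recall that in the proof of the ``If'' direction of that theorem, if each $H_i$ is levelable with respect to a weight function $(c(z^{(i)}_1),\dots,c(z^{(i)}_{r_i}))$ with independence weight $d_i$, then $G(H_1,\dots,H_n)$ is levelable with respect to the weight function that assigns $x_i$ the weight $d_i$ and keeps the weight $c(z^{(i)}_j)$ on each vertex of $H_i$. To force this output to be $(c_1,\dots,c_n,1,\dots,1)$, I want each $H_i$ to be levelable under the all-ones weighting with independence weight exactly $c_i$, and I want the $H_i$ to contribute a total of $\sum_{i=1}^n c_i$ vertices. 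Both are arranged by taking $H_i$ to be the edgeless graph on $c_i$ vertices $z^{(i)}_1,\dots,z^{(i)}_{c_i}$ (a single isolated vertex if $c_i=1$): its only maximal independent set is $V(H_i)$, so it is levelable with respect to $(1,\dots,1)$ and its independence weight is $c_i$.

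Concretely, I would take $G$ to be any connected graph on $\{x_1,\dots,x_n\}$, for instance the path $P_n$, attach the graphs $H_1,\dots,H_n$ just described, and set $\Gamma := G(H_1,\dots,H_n)$. Since each $H_i$ is levelable, Theorem~\ref{Sapporo} gives that $\Gamma$ is levelable, and the construction in its proof shows that $\Gamma$ is levelable with respect to the weight function assigning $c_i$ to $x_i$ and $1$ to every $z^{(i)}_j$. Relabelling the vertices $z^{(i)}_j$ as $x_{n+1},\dots,x_N$, this weight function is precisely $(c_1,\dots,c_n,1,\dots,1)$.

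Finally, I would check the remaining bookkeeping. The graph $\Gamma$ is connected: $G$ is connected and, by Definition~\ref{defGH}, each $z^{(i)}_j$ is joined to $x_i$, so every added vertex hangs off the connected core $G$. And $\Gamma$ has $n + \sum_{i=1}^n c_i = N$ vertices. I do not anticipate any real obstacle: the one idea required is to take $H_i$ edgeless on $c_i$ vertices --- which is in fact forced by wanting all the $H_i$-weights to equal $1$ while the total vertex count is $N$ --- after which the result is an immediate application of Theorem~\ref{Sapporo}.
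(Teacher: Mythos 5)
Your proof is correct and matches the paper's argument: the paper likewise takes $P_n$ and attaches $c_i$ pendant vertices to each $x_i$ (i.e., your $H_i$ is the edgeless graph on $c_i$ vertices), then invokes Theorem~\ref{Sapporo} and reads off the weight function $(c_1,\ldots,c_n,1,\ldots,1)$ from the description of the maximal independent sets. The only cosmetic difference is that you cite the weighting produced in the proof of Theorem~\ref{Sapporo} directly, whereas the paper re-verifies it by describing the maximal independent sets explicitly.
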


\begin{proof}
Let $P_n$ be the path on $\{x_1, \ldots, x_n\}$.  We introduce the graph $G$ on 
\[
\{x_1, \ldots, x_n\} \cup \{y_j^{(i)} : 1 \leq i \leq n, 1 \leq j \leq c_i\}
\]
which is obtained by adding edges $\{x_i, y_1^{(i)}\}, \ldots, \{x_i, y_{c_i}^{(i)}\}$ to $P_n$.  Theorem \ref{Sapporo} says that $G$ is levelable.  A maximal independent set of $G$ is of the form $A \cup B$, where $A$ is an independent set of $P_n$ and $B= \{y_j^{(i)} : x_i \not\in A, 1 \leq j \leq c_i\}$.  Hence $G$ is levelable with respect to the weight function
\[
(c(x_1), \ldots, c(x_n), c(y^{(1)}_1), \ldots, c(y^{(n)}_{c_n})) = (c_1, \ldots, c_n, 1, 1, \ldots, 1) \in \ZZ^N. 
\]
\end{proof}

\section{Cameron--Walker graphs}

We now turn to the discussion of the problem of which Cameron--Walker graphs are levelable.
A Cameron--Walker graph \cite{CW2005,HHKO} is a finite graph consisting of a connected bipartite graph with vertex partition $U \cup V$ such that there is at least one leaf (or pendant edge) attached to each $x \in U$ and that there may be possibly some pendant triangles attached to each $y \in V$.  A vertex $y \in V$ is called {\it exceptional} if there is no pendant triangle attached to $y$.  It will be assumed that, for each exceptional $y \in V$, there exists $x, x' \in U$ with $x \neq x'$ for which $\{x,y\}, \{x',y\} \in E(G)$. 

\begin{example}
The Cameron--Walker graph $G$ of Figure \ref{fig:CW} is not levelable.  To see why this is true, suppose that $G$ is levelable with respect to a weight function $(c_1, \ldots, c_{10})$.  Since $\{x_1, x_2, x_3, x_7, x_8\}$ and $\{x_1, x_2, x_6, x_7\}$ are maximal independent sets of $G$, one has $c_3 + c_8 = c_6$.  Furthermore, since $\{x_3, x_4, x_5, x_9\}$ and $\{x_4, x_5, x_6, x_9\}$ are maximal independent sets of $G$, one has $c_3 = c_6$.  Hence $c_8 = 0$, a contradiction.
\begin{figure}[h]
\centering
\begin{tikzpicture}[scale=1.35]
\coordinate (x1) at (0,3){};
\coordinate (x2) at (1,3){};
\coordinate (x3) at (2,3){};
\coordinate (x4) at (0,2){};
\coordinate (x5) at (1,2){};
\coordinate (x6) at (2,2){};
\coordinate (x7) at (0.6,1){};
\coordinate (x8) at (1.5,1){};
\coordinate (x9) at (.1,0){};
\coordinate (x10) at (1,0){};
\fill(x1)circle(0.7mm);
\fill(x2)circle(0.7mm);
\fill(x3)circle(0.7mm);
\fill(x4)circle(0.7mm);
\fill(x5)circle(0.7mm);
\fill(x6)circle(0.7mm);
\fill(x7)circle(0.7mm);
\fill(x8)circle(0.7mm);
\fill(x9)circle(0.7mm);
\fill(x10)circle(0.7mm);
\draw(x1)--(x4)--(x7)--(x9)--(x10)--(x7)--(x5)--(x2);
\draw(x3)--(x6)--(x8)--(x5);
\draw(-0.25,3)node{{$x_1$}};
\draw(0.75,3)node{{$x_2$}};
\draw(2.25,3)node{{ $x_3$}};
\draw(-0.25,2)node{{ $x_4$}};
\draw(0.75,2)node{{ $x_5$}};
\draw(2.25,2)node{{ $x_6$}};
\draw(0.25,1)node{{ $x_7$}};
\draw(1.75,1)node{{ $x_8$}};
\draw(-0.25,0)node{{ $x_9$}};
\draw(1.3,0)node{{ $x_{10}$}};
\end{tikzpicture}
\caption{A non-levelable Cameron--Walker graph} \label{fig:CW}
\end{figure}
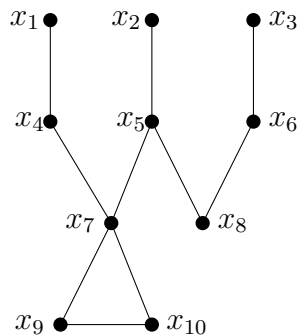 
\end{example}

We now come to a classification of levelable Cameron--Walker graphs.

\begin{theorem}
\label{theorem:CW}
A Cameron--Walker graph is levalable if and only if it has no exceptional vertex.
\end{theorem}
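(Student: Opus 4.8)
The plan is to split into the two implications. For the ``only if'' direction I would argue by contrapositive: assume the Cameron--Walker graph $G$ has an exceptional vertex $y \in V$, and produce four maximal independent sets violating the obstruction of Lemma~\ref{lem.noncondition}. By definition of exceptional, $y$ has no pendant triangle and there exist two distinct neighbours $x, x' \in U$ of $y$; each of $x, x'$ carries at least one leaf, say $\ell$ attached to $x$ and $\ell'$ attached to $x'$. The idea is to build four maximal independent sets as follows. Start from a maximal independent set $S$ of $G$ that contains $y$; then $S$ omits all neighbours of $y$ in $U$, and in particular it must contain the leaves $\ell, \ell'$ (a leaf can always be swapped into an independent set once its unique neighbour is excluded, and maximality forces it in). Removing $y$ from $S \setminus \{y\}$ and greedily re-completing, one can instead include $x$ (forcing out $\ell$ but keeping $\ell'$), or include $x'$ (forcing out $\ell'$ keeping $\ell$), or include both $x$ and $x'$ (forcing out both $\ell$ and $\ell'$), in each case leaving the rest of the configuration on $V \setminus \{y, x, x'\}$ and its pendants unchanged. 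Call these $F_1$ (with $x, x'$), $F_2$ (with $y$, hence $\ell, \ell'$), $F_3$ (with $x$, $\ell'$), $F_4$ (with $x'$, $\ell$). Then $F_3 \cap F_4$ should be arranged to be empty by choosing $F_3, F_4$ to differ precisely on the $\{x, \ell\}$ versus $\{x', \ell'\}$ part and to carry complementary, disjoint completions elsewhere — this is the delicate point (see below) — while $F_3 \cup F_4 \subsetneq F_1 \cup F_2$ because $F_1$ contributes $x, x'$ and $F_2$ contributes $\ell, \ell'$, so $F_1 \cup F_2 \supseteq \{x, x', \ell, \ell'\} \supseteq F_3 \cup F_4$ restricted to that block, with strict containment coming from $y \in F_2 \setminus (F_3 \cup F_4)$. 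Lemma~\ref{lem.noncondition} then shows $G$ is not levelable.

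For the ``if'' direction, suppose $G$ has no exceptional vertex, so every $y \in V$ has at least one pendant triangle. The plan is to exhibit an explicit positive integer weight function. Assign weight $1$ to every leaf attached to a vertex of $U$; assign weight $1$ to both non-$V$ vertices of each pendant triangle; assign weight $2$ to the vertex $y \in V$ to which that triangle is attached (and weight $2$ to each $x \in U$); and check the bookkeeping. The key structural fact to use is the description of maximal independent sets of a Cameron--Walker graph: for each $x \in U$ a maximal independent set either contains $x$ or contains all of its leaves; for each pendant triangle a maximal independent set contains exactly one of its three vertices, and it contains the apex $y \in V$ only if it excludes all neighbours of $y$ in $U$. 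I would then verify that the contribution of each ``gadget'' (a $U$-vertex with its leaves, a triangle with its apex) to $\sum_{x_i \in W} c_i$ is constant across all $W \in \mathrm{MaxInd}(G)$: choosing $x$ versus its leaves contributes $2$ either way; within a triangle, choosing a base vertex gives $1$, while choosing the apex $y$ gives $2$, and in the latter case the fact that $y$ is excluded from nowhere forces a compensating change — here I would instead give the apex weight $1$ and push the weight accounting onto the triangle, so that each triangle always contributes exactly $1$ and the apex never needs to be in a maximal independent set of positive ``extra'' cost. Getting these local weights to globally cancel, using that every $y\in V$ has a triangle attached (so $y$ can always be pushed out of a maximal independent set in favour of a triangle vertex), is the crux of this direction.

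I expect the main obstacle to be the bookkeeping on the bipartite ``core'' $G[U \cup V]$: the independence structure there is genuinely graph-dependent (it is an arbitrary connected bipartite graph), so the weights on $U$ and $V$ must be chosen so that the core never contributes a variable amount — the cleanest route is to force, via the weights, that no maximal independent set of $G$ ever includes a vertex of $V$ at a net cost, which is exactly where the no-exceptional-vertex hypothesis is used (each $y\in V$ has a private triangle into which it can be traded). Concretely I would set all leaves and all triangle vertices to weight $1$ and all of $U$ to weight $1$ as well, give each $y \in V$ weight $1$, and then observe that a maximal independent set restricted to each $U$-vertex-with-pendants block has constant weight and restricted to each triangle block has constant weight, while $V$-vertices, having degree at least one into $U$ and a triangle, can be taken out; if a residual discrepancy remains I would rescale by an $\mathrm{lcm}$ as in Lemma~\ref{lem.pairwise}. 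The write-up should state the maximal-independent-set description as a preliminary observation, handle the two blocks types by a one-line local count, and invoke the hypothesis precisely once, for the $V$-vertices.
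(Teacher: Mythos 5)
Both directions of your proposal have genuine gaps. For the ``if'' direction you never arrive at a weight function that works: the first attempt (weight $2$ on vertices of $U$, $1$ on leaves) only balances the block of a vertex $x\in U$ when $x$ has exactly two leaves, and your final suggestion of giving every vertex weight $1$ amounts to claiming that a Cameron--Walker graph without exceptional vertices is well-covered, which is false --- already for a single core edge $\{x,y\}$ with two leaves attached to $x$ and one pendant triangle attached to $y$ there are maximal independent sets of sizes $3$ and $2$, and rescaling by an $\mathrm{lcm}$ cannot repair non-constant sums. The weights that do work are $q_i$ on $x_i\in U$ (its number of leaves), $r_j$ on $y_j\in V$ (its number of pendant triangles), and $1$ on all leaves and triangle vertices; alternatively, and this is the paper's one-line route, a Cameron--Walker graph with no exceptional vertex is exactly a graph $G(H_1,\ldots,H_n)$ as in Definition~\ref{defGH}, with each $H_i$ a nonempty edgeless graph (the leaves) or a nonempty disjoint union of edges (the triangles minus their apexes), all of which are levelable, so Theorem~\ref{Sapporo} applies. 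Your write-up neither invokes Theorem~\ref{Sapporo} nor pins down correct local weights.

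For the ``only if'' direction, your plan to apply Lemma~\ref{lem.noncondition} hinges on producing two \emph{globally disjoint} maximal independent sets $F_3,F_4$ with $F_3\cup F_4\subsetneq F_1\cup F_2$; you explicitly flag this as ``the delicate point'' and never resolve it, and it is not a routine completion. Disjointness forces, for every vertex of $U$, that one of $F_3,F_4$ contains it and the other contains all of its leaves; forces $F_3$ and $F_4$ to choose distinct non-apex vertices in every pendant triangle (so neither may contain any non-exceptional vertex of $V$); and forces every exceptional vertex omitted from both sets to have $U$-neighbours on both sides --- which is exactly the point where the standing hypothesis of two distinct $U$-neighbours must enter, and your sketch confines the construction to the block $\{x,x',\ell,\ell',y\}$ and waves at ``complementary, disjoint completions elsewhere.'' Such a global construction can in fact be carried out (take $F_1=(\text{all leaves})\cup V'\cup\{v\text{'s}\}$, $F_2=U\cup\{w\text{'s}\}$, and build $F_3,F_4$ by splitting $U$ with $x\in F_3$, $x'\in F_4$, leaves opposite their support vertex, the $v$'s in $F_3$ and the $w$'s in $F_4$, and each remaining exceptional vertex placed on a side containing none of its $U$-neighbours or on neither side if its neighbours are split), but none of this appears in your argument. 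Note also that the paper sidesteps disjointness entirely: it exhibits two pairs of maximal independent sets differing only locally, deduces the two equations $c(z^{(1)}_1)+\cdots+c(z^{(1)}_{q_1})+\sum_{y_j\in V',\,\{x_1,y_j\}\in E(G)}c(y_j)=c(x_1)$ and $c(x_1)=c(z^{(1)}_1)+\cdots+c(z^{(1)}_{q_1})$, and subtracts to get a zero weight, a contradiction; this swap-and-compare argument is both shorter and avoids the global bookkeeping your route requires.
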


\begin{proof}
First, the ``If'' part follows from Theorem \ref{Sapporo}.  To prove the ``Only If'' part, suppose that $G$ is a Cameron--Walker graph with an exceptional vertex.  Let $U = \{x_1, \ldots, x_a\}$ and $V = \{y_1, \ldots, y_b\}$.  Let $A_i$ denote the set of vertices $z$ of $G$ for which $\{x_i,z\}$ is a
leaf attached to $x_i$ for $1 \leq i \leq a$ and $B_j$ the set of edges $\{v,w\}$ for which $G|_{\{y_j,v,w\}}$ is a pendant triangle attached to $y_j$ for $1 \leq j \leq b$.  Each $A_i$ is nonempty and some $B_j$ might be empty.  Let $A_i = \{z_1^{(i)}, \ldots, z_{q_i}^{(i)}\}$ and $B_j = \{\{v_1^{(j)},w_1^{(j)}\}, \ldots, \{v_{r_j}^{(j)},w_{r_j}^{(j)}\}\}$.  Let $V'$ denote the set of exceptional vertices.  Without loss of generality, let $y_1$ be an exceptional vertex and $\{x_1, y_1\} \in E(G)$.  Now,
\[
(A_1 \cup \cdots \cup A_a) \cup V' \cup \left(\,\bigcup_{y_j \not\in V'}\{v_1^{(j)}, \ldots, v_{r_j}^{(j)}\}\right)
\]
is a maximal independent set of $G$.  Furthermore,
\[
\{x_1\} \cup (A_2 \cup \cdots \cup A_a) \cup \{y_j \in V': \{x_1,y_j\} \not\in E(G)\} \cup \left(\,\bigcup_{y_j \not\in V'}\{v_1^{(j)}, \ldots, v_{r_j}^{(j)}\}\right)
\]
is a maximal independent set of $G$.  Suppose that $G$ is levelable with respect to a weight function $(c(x_1), \ldots, c(y_b), c(z^{(1)}_1), \ldots)$.  It then follows that
\begin{eqnarray}
\label{aaaaa}
c(z^{(1)}_1)+\cdots+c(z^{(1)}_{q_1}) + \sum_{y_j \in V', \, \{x_1,y_j\} \in E(G)} c(y_j) = c(x_1).
\end{eqnarray}
On the other hand, 
\[
U \cup \left(\,\bigcup_{y_j \not\in V'}\{v_1^{(j)}, \ldots, v_{r_j}^{(j)}\}\right)
\]
is a maximal independent set $G$.  Furthermore, by using the assumption that, for each $y \in V'$, there exists $x, x' \in U$ with $x \neq x'$ for which $\{x,y\}, \{x',y\} \in E(G)$, it follows that  
\[
A_1 \cup \{x_2, \ldots, x_a\} \cup \left(\,\bigcup_{y_j \not\in V'}\{v_1^{(j)}, \ldots, v_{r_j}^{(j)}\}\right)
\]
is a maximal independent set of $G$.  Hence
\begin{eqnarray}
\label{bbbbb}
c(x_1)=c(z^{(1)}_1)+\cdots+c(z^{(1)}_{q_1}).
\end{eqnarray}
Finally, by (\ref{aaaaa}) and (\ref{bbbbb}), one has
\[
\sum_{y_j \in V', \, \{x_1,y_j\} \in E(G)} c(y_j) = 0,
\]
a contradiction.  
\end{proof}

Suppose that $G$ is a Cameron--Walker graph with no exceptional vertex and work in the same notation as in the proof of Theorem \ref{theorem:CW}.  Then $G$ is levelable with respect to the weight function 
\[
(c(x_1), \ldots, c(y_b), c(z^{(1)}_1), \ldots) =(q_1, \ldots, q_a, r_1, \ldots r_b, 1, 1, \ldots, 1,1).
\]

\section{Chordal graphs}
Let $\Delta$ be a simplicial complex (the formal definitions are postponed until
Section 7). A vertex $x$ of $\Delta$ is called {\it free} if $x$ belongs to exactly one facet.  A facet $F$ of $\Delta$ is said to be a {\it leaf} of $\Delta$ if either $\Delta = \langle F \rangle$, or there exists a facet $G$ of $\Delta$ with $G \neq F$, called a {\it branch} of $F$, for which $H \cap F \subseteq G \cap F$ for all facets $H$ of $\Delta$ with $H \neq F$.  Each vertex belonging to $F \setminus G$ is free.  A labeling
$F_1, \ldots, F_s$ of the facets of $\Delta$ is called a {\it leaf order} if, for each $1 < i \leq s$, the facet $F_i$ is a leaf of the subcomplex $\langle F_1, \ldots, F_{i}\rangle$.  A simplicial complex possessing a leaf order is called a {\it quasi-forest}.  We refer the reader to \cite[Chapter 9]{HHgtm260} for the detailed information.

\begin{figure}[h]
\centering
\begin{tikzpicture}[scale=1.35]
\coordinate (x1) at (0.5,1){};
\coordinate (x2) at (0,0){};
\coordinate (x3) at (1,0){};
\coordinate (x4) at (-0.5,-1){};
\coordinate (x5) at (0.5,-1){};
\coordinate (x6) at (1.5,-1){};
\fill(x1)circle(0.7mm);
\fill(x2)circle(0.7mm);
\fill(x3)circle(0.7mm);
\fill(x4)circle(0.7mm);
\fill(x5)circle(0.7mm);
\fill(x6)circle(0.7mm);
\draw(x1)--(x4)--(x6)--cycle;
\draw(x2)--(x3)--(x5)--cycle;
\draw(0.45,1.25)node{{  $x_1$}};
\draw(-0.32,0)node{{  $x_2$}};
\draw(1.25,0)node{{  $x_3$}};
\draw(-0.5,-1.25)node{{  $x_4$}};
\draw(0.5,-1.25)node{{  $x_5$}};
\draw(1.5,-1.25)node{{  $x_6$}};
\draw(0.45,0.3)node{{  $F_1$}};
\draw(0.45,-0.35)node{{  $F_2$}};
\draw(-0.05,-0.6)node{{  $F_3$}};
\draw(0.95,-0.6)node{{  $F_4$}};
\end{tikzpicture}
\caption{A leaf order} \label{fig:leaf}
\end{figure}
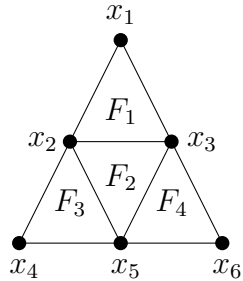 

\begin{lemma}
\label{AAAAA}
Let $\Delta$ be a quasi-forest on $\{x_1, \ldots, x_n\}$ and suppose that $F_1, \ldots, F_s$ is a leaf order.  Then there is a sequence $(c_1, \ldots, c_n)$ of positive integers and an integer $c$ for which $\sum_{i \in F_j}c_i = c$ for all $F_j$.    
\end{lemma}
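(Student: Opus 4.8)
The plan is to induct on the number of facets $s$ in the leaf order, constructing the weight vector $(c_1,\dots,c_n)$ (together with the common value $c$) so that $\sum_{i \in F_j} c_i = c$ for every facet $F_j$. The base case $s=1$ is immediate: $\Delta = \langle F_1 \rangle$, so we may take $c_i = 1$ for every $i$ and $c = |F_1|$ (or, if there happen to be vertices of $\{x_1,\dots,x_n\}$ not in $F_1$, assign them the value $1$ as well; they lie in no facet, so they impose no constraint). For the inductive step, write $\Delta' = \langle F_1,\dots,F_{s-1}\rangle$, which is a quasi-forest with leaf order $F_1,\dots,F_{s-1}$ on its own vertex set; by induction there is a positive-integer vector assigning weight $c'_i$ to each vertex $x_i$ appearing in $\Delta'$, with $\sum_{i\in F_j} c'_i = c'$ for all $j < s$. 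The facet $F_s$ is a leaf of $\Delta$ with some branch $G = F_k$ ($k < s$), and $F_s \setminus G$ consists of free vertices — vertices lying in no other facet of $\Delta$, hence in no facet of $\Delta'$.

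The idea now is to rescale the existing weights by a suitable positive integer $m$ and use the free vertices of $F_s$ to absorb the discrepancy. Concretely, let $W = \sum_{i \in F_s \cap G} c'_i$ be the current weight of the part of $F_s$ that is pinned down by $\Delta'$ (note $F_s \cap G \subseteq G$, and every vertex of $F_s \cap G$ already carries a weight from the inductive step, since it lies in $F_k = G$). Since $F_s$ is a leaf with branch $G$ and $F_s \setminus G \neq \emptyset$ is free, we need to choose positive integer weights on the vertices of $F_s \setminus G$ so that $m \cdot W$ plus those new weights equals the new common value $c := m c'$. It suffices to pick $m$ large enough that $mc' - mW = m(c' - W)$ is a positive integer at least $|F_s \setminus G|$; since $c' > W$ would need to be verified — and in fact $c' \geq W$ always because $F_s \cap G \subsetneq$ some facet... let me instead argue directly: choose $m$ so that $m c' > m W$, i.e. we need $c' > W$. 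If $F_s \cap G$ were all of some facet $F_j$ we would have $W = c'$; in general $F_s \cap G \subseteq G = F_k$, so $W \le c'$, with equality only if $F_s \cap G = F_k$ as weighted sums force. To sidestep this subtlety cleanly, first replace all $c'_i$ by $c'_i + N$ for a large constant $N$ (legitimate? no — that breaks the facet equations unless all facets have equal size).

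The clean fix, and the step I expect to be the real obstacle, is handling the case $W = c'$ (equivalently, $F_s$ contributes the same pinned weight as a full facet of $\Delta'$): here there is no room to add positive weights on $F_s \setminus G$ without overshooting. The resolution is to rescale by $m = 2$ first, so the common value becomes $2c'$ and the pinned part of $F_s$ has weight $2W = 2c'$... which still leaves no room. So instead: rescale the inductive weights by $m$, set the target to $mc' + |F_s\setminus G|$, bump every vertex of every facet — no. The correct approach: before doing anything, note that we are free to add $1$ to the weight of every vertex of $\Delta'$ *and simultaneously* this does not preserve the equations unless facets are equicardinal, so this does not work in general, confirming that the genuine content of the lemma is the leaf structure. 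The actual argument: since $F_s \setminus G$ is nonempty, pick one free vertex $x_\ell \in F_s \setminus G$; give every vertex of $F_s \setminus \{x_\ell\}$ that is not yet weighted the value $1$, let $S$ be the resulting total weight of $F_s \setminus \{x_\ell\}$ (this uses $c'_i$ for $i \in F_s \cap G$ and $1$'s for the other new vertices of $F_s$), and finally set $c'_\ell$ to be whatever is needed — but it must be positive. So choose a global scaling factor $m$ first with $m c' - S_0 \ge 1$, where $S_0$ is computed as above with the $c'_i$ already scaled by $m$; since $S_0 = m\cdot(\text{pinned weight}) + (\text{number of other new vertices of }F_s)$ and the pinned weight is strictly less than $c'$ unless $F_s \cap G$ fills a branch facet — and in that degenerate case $F_s \setminus G$ being free and nonempty still forces us to add at least one positive weight, making $\sum_{i \in F_s} > c'$, so we must scale $c'$ up, i.e. take $m \ge S_0$ — we can always arrange $mc' \ge S_0 + 1$ by choosing $m$ large. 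Then $c'_\ell := mc' - S_0 \ge 1$ works, all weights are positive integers, the new common value is $mc'$, the equations for $F_1,\dots,F_{s-1}$ hold after scaling, and the equation for $F_s$ holds by construction. This completes the induction.

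I expect the bookkeeping around which vertices of $F_s$ are "already weighted" (those in the branch $G$, whose weights come from $\Delta'$) versus "new and free" (those in $F_s\setminus G$), together with verifying that the scaling factor $m$ can always be chosen to keep the final weight positive, to be the crux; everything else is routine once the leaf/branch dichotomy is set up correctly.
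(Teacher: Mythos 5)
Your strategy is the same as the paper's: induct along the leaf order, rescale the weights obtained for $\langle F_1,\ldots,F_{s-1}\rangle$, and use the free vertices $F_s\setminus G$ to absorb the difference between the pinned weight of $F_s\cap G$ and the common value. But the step you yourself flag as the crux is left genuinely unresolved, and your proposed workaround for it is wrong. Everything depends on the strict inequality $W<c'$ (pinned weight of $F_s\cap G$ strictly below the common facet weight): only then can a scaling factor $m$ with $m(c'-W)\geq |F_s\setminus G|$ be found. In the ``degenerate case'' $W=c'$ you claim one can still ``arrange $mc'\geq S_0+1$ by choosing $m$ large,'' but this is false: $S_0=mW+(|F_s\setminus G|-1)=mc'+(|F_s\setminus G|-1)$ scales with $m$ at exactly the same rate, so the slack $mc'-S_0$ is a fixed nonpositive number no matter how large $m$ is. As written, the induction does not close.

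The fix is one observation, which is what the paper uses implicitly: the branch $G=F_k$ is a facet of $\Delta$ distinct from $F_s$, and distinct facets are incomparable, so $G\not\subseteq F_s$, hence $F_s\cap G\subsetneq G$; since the inductive weights are positive integers, $W=\sum_{x_i\in F_s\cap G}c'_i<\sum_{x_i\in G}c'_i=c'$. (The same incomparability, in the other direction, gives $F_s\setminus G\neq\emptyset$, and the leaf condition makes these exactly the free vertices of $F_s$.) With $c'-W\geq 1$ in hand, your choice of $m$ and the assignment of weight $1$ to all but one new free vertex, with the slack $mc'-S_0\geq 1$ placed on $x_\ell$, is correct and is essentially the paper's argument: the paper scales so that $c-c'$ exceeds the number of free vertices and then distributes that difference among them arbitrarily. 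So the proposal is salvageable with this one line, but as submitted it contains a gap at precisely the point it identifies as the obstacle, together with an incorrect claim about how to get around it.
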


\begin{proof}
Let $x_t, x_{t+1}, \ldots, x_n$ be the free vertices of $\Delta$ in $F_s$. Let $G=F_k$ be a branch of the leaf $F_s \in \Delta$. The simplicial complex $\langle F_1, \ldots, F_{s-1}\rangle$ on $\{x_1, \ldots, x_{t-1}\}$
is a quasi-forest with a leaf order $F_1, \ldots, F_{s-1}$.  Thus, by using induction on $s$, there
is a sequence $(c_1, \ldots, c_{t-1})$ of positive integers and an integer $c$ for which $\sum_{x_i \in F_j}c_i = c$ for all $F_j$ with $1 \leq j < s$.  Let $\sum_{x_i \in (G\cap F_{s})}c_i = c'$. 
One may assume that $c - c' > n - t + 1$
since we can replace $(c_1,\ldots,c_{t-1})$ and $c$ with
$(dc_1,\ldots,dc_{t-1})$ and $dc$ for any positive
integer $d \geq 1$. Let $c_t, \ldots, c_n$ be positive integers with $c_t + \cdots + c_n = c - c'$. Note that $F_s\setminus G = \{x_t,x_{t+1},\ldots,x_n\}$. It then follows that $(c_1, \ldots, c_n)$ has the required property.
\end{proof}

Recall from \cite[Theorem 9.2.12]{HHgtm260} that the clique complex of a finite graph $G$ is a quasi-forest 
if and only if $G$ is a chordal graph.  A co-chordal
graph is a finite graph for which the complement 
graph $G^c$ of $G$ is chordal.  

\begin{theorem}\label{thm.cochordal}
A co-chordal graph is levelable.   
\end{theorem}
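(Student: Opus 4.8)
The plan is to connect co-chordal graphs to the quasi-forest structure already established in Lemma \ref{AAAAA}. The key observation is the standard translation between maximal independent sets of $G$ and cliques of the complement $G^c$: a subset $W \subseteq V(G)$ is an independent set of $G$ if and only if $W$ is a clique of $G^c$, and hence $W$ is a maximal independent set of $G$ if and only if $W$ is a maximal clique (i.e., a facet of the clique complex) of $G^c$. So ${\rm MaxInd}(G)$ is exactly the set of facets of the clique complex $\Delta(G^c)$.

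First I would invoke \cite[Theorem 9.2.12]{HHgtm260}, cited just above the statement, which says that the clique complex of a finite graph is a quasi-forest if and only if the graph is chordal. Since $G$ is co-chordal, $G^c$ is chordal, so $\Delta := \Delta(G^c)$ is a quasi-forest; fix a leaf order $F_1, \ldots, F_s$ of its facets. Next I would apply Lemma \ref{AAAAA} to $\Delta$: this produces a sequence $(c_1, \ldots, c_n)$ of positive integers and an integer $c$ with $\sum_{i \in F_j} c_i = c$ for every facet $F_j$ of $\Delta$.

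Finally I would assemble the conclusion: since the facets $F_1, \ldots, F_s$ of $\Delta(G^c)$ are precisely the maximal independent sets of $G$, the weight function $(c_1, \ldots, c_n) \in \mathbb{N}^n_{>0}$ satisfies $\sum_{x_i \in W} c_i = c$ for all $W \in {\rm MaxInd}(G)$, which is exactly the definition of $G$ being a levelable graph (Definition \ref{defn.levelable}), with independence weight $c$. One small point worth checking explicitly: every vertex of $G$ must appear in $\Delta(G^c)$, i.e., $\Delta(G^c)$ has no ``missing'' vertices that fail to lie in any facet — but each vertex is a clique of $G^c$ of size one, hence lies in some maximal clique, so this is automatic, and it ensures the weight function is genuinely defined on all of $V(G)$.

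I do not expect a serious obstacle here; the content has effectively been front-loaded into Lemma \ref{AAAAA} and the cited characterization of clique complexes of chordal graphs. The only thing requiring care is the bookkeeping of the clique/independent-set dictionary and making sure the positivity of all $c_i$ (guaranteed by Lemma \ref{AAAAA}) is preserved, so that the weight function lies in $\mathbb{N}^n_{>0}$ as Definition \ref{defn.levelable} demands.
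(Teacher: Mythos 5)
Your proof is correct and follows exactly the paper's argument: identify ${\rm MaxInd}(G)$ with the facets of the clique complex of the chordal graph $G^c$, note that this clique complex is a quasi-forest by \cite[Theorem 9.2.12]{HHgtm260}, and apply Lemma \ref{AAAAA} to obtain the positive weight function. The extra checks you mention (every vertex lies in some maximal clique, positivity of the weights) are fine but routine; no further changes are needed.
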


\begin{proof}
Let $G$ be a co-chordal graph on $\{x_1, \ldots, x_n\}$.  Then the set of maximal independent sets of $G$ is the set of maximal cliques of the chordal graph $G^c$.  Thus the desired result follows from Lemma \ref{AAAAA}.
 \end{proof}

\begin{example}
The chordal graph $G$ of Figure \ref{fig:leaf} is levelable, since $G$ is co-chordal.  It follows from Theorem \ref{theorem:CW} that the path of $P_5$ of Figure \ref{fig:treeA} is non-levelable, thus
giving a proof different from Example \ref{ex.P5}. On the other hand, by Theorem \ref{Sapporo}, the tree of Figure \ref{fig:treeB} is levelable.
\end{example}
\begin{figure}[h]
\centering
\begin{tikzpicture}[scale=1.35]
\coordinate (x1) at (0,0){};
\coordinate (x2) at (1,0){};
\coordinate (x3) at (2,0){};
\coordinate (x4) at (3,0){};
\coordinate (x5) at (4,0){};
\fill(x1)circle(0.7mm);
\fill(x2)circle(0.7mm);
\fill(x3)circle(0.7mm);
\fill(x4)circle(0.7mm);
\fill(x5)circle(0.7mm);
\draw(x1)--(x2)--(x3)--(x4)--(x5);
\draw(x1)node[below=1mm]{{  $x_1$}};
\draw(x2)node[below=1mm]{{  $x_2$}};
\draw(x3)node[below=1mm]{{  $x_3$}};
\draw(x4)node[below=1mm]{{  $x_4$}};
\draw(x5)node[below=1mm]{{  $x_5$}};
\end{tikzpicture}
\caption{A non-levelable tree} \label{fig:treeA}
\end{figure}
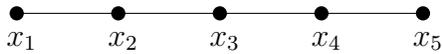 
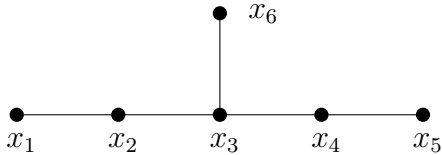
\begin{figure}[h]
\centering
\begin{tikzpicture}[scale=1.35]
\coordinate (x1) at (0,0){};
\coordinate (x2) at (1,0){};
\coordinate (x3) at (2,0){};
\coordinate (x4) at (3,0){};
\coordinate (x5) at (4,0){};
\coordinate (x6) at (2,1){};
\fill(x1)circle(0.7mm);
\fill(x2)circle(0.7mm);
\fill(x3)circle(0.7mm);
\fill(x4)circle(0.7mm);
\fill(x5)circle(0.7mm);
\fill(x6)circle(0.7mm);
\draw(x1)--(x2)--(x3)--(x4)--(x5);
\draw(x3)--(x6);
\draw(x1)node[below=1mm]{{  $x_1$}};
\draw(x2)node[below=1mm]{{  $x_2$}};
\draw(x3)node[below=1mm]{{  $x_3$}};
\draw(x4)node[below=1mm]{{  $x_4$}};
\draw(x5)node[below=1mm]{{  $x_5$}};
\draw(x6)node[right=1mm]{{  $x_6$}};
\end{tikzpicture}
\caption{A levelable tree} \label{fig:treeB}
\end{figure}

It is a reasonable project to classify all levelable chordal graphs.  As a first step in achieving this 
goal,  one must classify all levelable trees.  We
now turn our attention to this goal.

Recall that a {\it free vertex} of a tree is a vertex which belongs to exactly one edge.  Every tree has at least two free vertices.  The {\it distance} of vertices $x$ and $y$ in a tree is the length of the unique path
between $x$ and $y$ in the tree.  Note that in a tree a path between $x$ and $y$ is unique.

 \begin{theorem}\label{thm.trees}
 A tree is levelable if and only if each non-free vertex is adjacent to at least one free vertex.  
\end{theorem}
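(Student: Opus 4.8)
The plan is to prove both directions by exhibiting, respectively, an explicit obstruction (via Lemma~\ref{lem.noncondition}) and an explicit weight function (building on the construction ideas behind Theorem~\ref{Sapporo}). For the ``only if'' direction, suppose $T$ is a tree with a non-free vertex $v$ none of whose neighbours is free. I would root $T$ and look locally at $v$: every neighbour $u$ of $v$ is itself non-free, hence has a neighbour other than $v$. The goal is to produce four maximal independent sets $F_1,F_2,F_3,F_4$ with $F_3\cup F_4\subsetneq F_1\cup F_2$ and $F_3\cap F_4=\emptyset$, which is impossible for a levelable graph. The natural candidates: let $F_1,F_2$ be two maximal independent sets obtained by ``toggling'' around $v$ (one containing $v$, one not), and $F_3,F_4$ two maximal independent sets that between them use a strictly smaller vertex set but are disjoint. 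Concretely, in a path-like neighbourhood of $v$, the $P_5$ argument of Example~\ref{ex.P5} generalizes: the hypothesis that no neighbour of $v$ is free guarantees enough room on at least two sides of $v$ to run the alternating-parity trick, forcing the weight of $v$ to be $0$.

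For the ``if'' direction, assume every non-free vertex of $T$ is adjacent to a free vertex. I would describe $T$ as built from its ``core'' together with pendant (free) vertices. Let $U$ be the set of non-free vertices and for each $u\in U$ let $L(u)$ be the (nonempty, by hypothesis) set of free vertices adjacent to $u$; also let $L_0$ be free vertices whose unique neighbour is also free — but in a tree with $\geq 2$ vertices this can only be the two endpoints of $P_2$, a trivial case handled separately. The key structural observation is that $T$ is then of the shape $G(H_1,\dots,H_m)$ in the sense of Definition~\ref{defGH}, where $G$ is the induced subtree on the non-free vertices and each $H_i$ is an edgeless graph on the free leaves hanging off the $i$-th non-free vertex (an edgeless graph is levelable by the Remark following the expansion theorem, being $n$ isolated vertices, or equivalently $K_{r}$ with its trivial maximal independent set structure — more precisely it has a unique maximal independent set, so it is levelable). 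Then Theorem~\ref{Sapporo} immediately gives that $T$ is levelable. I would need to check the edge cases where the ``core'' $G$ is empty or a single vertex (e.g. a star, or $P_2$, $P_3$, $P_4$), each of which is directly levelable.

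The main obstacle is the ``only if'' direction: one must verify that whenever $v$ is a non-free vertex with all neighbours non-free, the tree really does contain the four sets satisfying Lemma~\ref{lem.noncondition}. This requires a careful case analysis of the local picture around $v$ — how many neighbours it has, and the parity/length of the branches emanating from them — to always locate a ``short'' disjoint pair $F_3,F_4$ whose union is properly contained in the union of a ``long'' pair $F_1,F_2$. I expect the cleanest route is: pick a neighbour $u$ of $v$ and a neighbour $w\neq v$ of $u$; use the subtree rooted away from $v$ through $u$ to build one alternating set and its parity-shift, and symmetrically handle the rest of $T$, combining via maximality of independent sets in trees (every independent set extends uniquely once we fix a greedy rule). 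A secondary, more routine obstacle is formalizing the decomposition $T=G(H_1,\dots,H_m)$ in the ``if'' direction and making sure the bookkeeping of which free vertices attach to which core vertex is consistent with Definition~\ref{defGH}.
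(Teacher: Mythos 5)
Your ``if'' direction is essentially the paper's argument and is fine: under the hypothesis, a tree (apart from the trivial cases $K_1$, $K_2$) decomposes as $G(H_1,\ldots,H_m)$ in the sense of Definition~\ref{defGH}, where $G$ is the (connected) induced subtree on the non-free vertices and each $H_i$ is a nonempty edgeless graph of leaves, which is levelable since it has a unique maximal independent set; Theorem~\ref{Sapporo} then applies. The bookkeeping you flag as ``routine'' really is routine.

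The ``only if'' direction, however, is a genuine gap: you never produce the four maximal independent sets, you only announce a plan (``I would\ldots'', ``I expect the cleanest route is\ldots'') and yourself identify this as the main obstacle. The difficulty is not cosmetic. Your strategy insists on invoking Lemma~\ref{lem.noncondition}, which requires $F_3\cap F_4=\emptyset$ in addition to $F_3\cup F_4\subsetneq F_1\cup F_2$, and arranging genuine disjointness of two maximal independent sets that must simultaneously cover all the branches hanging off the bad vertex is exactly the case analysis you have not done; it is not clear your ``alternating-parity'' local picture handles a bad vertex of large degree with branches of arbitrary shapes. The paper avoids this entirely. It first reduces to the case where deleting the bad vertex $x$ leaves exactly two components: if there are $q>2$ components, fixing a suitable maximal independent set of one component shows (as in Lemma~\ref{lemma.connected}) that the tree minus that component is still levelable with $x$ still non-free there, and induction on the number of vertices gives a contradiction. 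In the two-component case it takes $W,U$ (maximal independent sets of $G_1$ containing, respectively, the distance-two vertices and the neighbour $y_1$ of $x$) and $W',U'$ analogously in $G_2$, and uses the four maximal independent sets $W\cup W'\cup\{x\}$, $U\cup U'$, $W'\cup U$, $W\cup U'$. These need \emph{not} be disjoint, and Lemma~\ref{lem.noncondition} is not invoked; instead the equality of the four weights is used directly, and the overlaps cancel, forcing $c(x)=0$. If you want to salvage your route, you must either prove that disjoint $F_3,F_4$ with the required containment always exist in this setting, or drop the lemma and argue with weights directly as the paper does.
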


\begin{proof}
Theorem \ref{Sapporo} guarantees that ``If'' is true.  To show that ``Only If'' is true, suppose that a tree $G$ is levelable and that there is a non-free vertex $x$ which is adjacent to no free vertex.  Let $G_1, \ldots, G_q$ with $q \geq 2$ denote the connected components of $G \setminus \{x\}$.  Each $G_i$ has at least one edge.  

Let $q >2$.  Taking a maximal independent set $W$ of $G_q$ which does not contain the vertex of $G_q$ which is adjacent to $x$, it follows that $G \setminus G_q$ is levelable. Since $q > 2$, the vertex $x$ cannot be a free vertex of $G \setminus G_q$.  Thus working with induction on the number of vertices of a tree guarantees the existence of a free vertex of $G \setminus G_q$ which is adjacent to $x$, a contradiction.  Hence $q = 2$.

Let $q =2$.  Let  $A$ denote the set of vertices $y$ of $G$ for which the distance of $y$ and $x$ in $G$ is $2$.  Thus both $A\cap G_1$ and $A \cap G_2$ are nonempty.  Let $y_i$ denote the vertex of $G_i$ which is adjacent to $x$ for $i=1,2$.  Let $W$ be a maximal independent set of $G_1$ which contains $A\cap G_1$ and $W'$ a maximal independent set of $G_2$ which contains $A\cap G_2$.  Let $U$ be a maximal independent set of $G_1$ which contains $y_1$ and $U'$ a maximal independent set of $G_2$ which contains $y_2$.  Then 
\[
W\cup W' \cup \{x\}, \, \, \,  U \cup U', \, \, \, W' \cup U, \, \, \, W \cup U'
\]
are maximal independent sets of $G$.  Let $\{x_1, \ldots, x_n\}$ be the vertex set of $G$ and suppose that $G$ is levelable with respect to a weight function $(c_1, \ldots, c_n)$.  Let $a = \sum_{x_i \in W} c_i$, $b = \sum_{x_i \in W'} c_i$, $c = \sum_{x_i \in U} c_i$ and $d = \sum_{x_i \in U'} c_i$. Let $x = x_1$.  It then follows 
that the positive integers 
\[c_1 + a + b,  \, \, \, c + d,  \, \, \, b + c,  \, \, \, a + d\]
are equal.  Hence $a =c$.  Thus $c_1=0$, a contradiction.
\end{proof}

\begin{remark}
    Levit and Tankus \cite{LT2015} described a polynomial time
    algorithm to determine if a graph that is $C_4, C_5$ and $C_6$-free
    is a weighted well-covered graph.  Since trees
    do not have any cycles, it can be shown
    that the algorithm of \cite{LT2015} produces the previous result,
    namely, that a tree is levelable if and only if every 
    non-free vertex is adjacent to at least on free vertex.
    \end{remark}

\begin{definition}
    A graph $G$ is called a {\it caterpillar graph} if $G$ is a tree and if
    all the vertices of $G$ are  within distance one
    of a central path. 
    Each  vertex 
    that is 
    not on the central path is 
    called a
    {\it leg} of $G$. 
\end{definition}

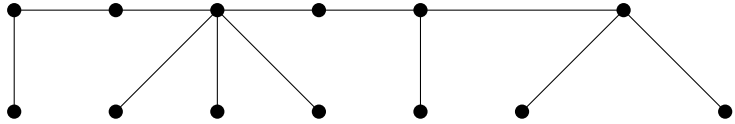
\begin{figure}[h]
\centering
\begin{tikzpicture}[scale=1.35]
\coordinate (a) at (0,1){};
\coordinate (b) at (1,1){};
\coordinate (c) at (2,1){};
\coordinate (d) at (3,1){};
\coordinate (e) at (4,1){};
\coordinate (f) at (6,1){};
\coordinate (g) at (0,0){};
\coordinate (h) at (1,0){};
\coordinate (i) at (2,0){};
\coordinate (j) at (3,0){};
\coordinate (k) at (4,0){};
\coordinate (l) at (5,0){};
\coordinate (m) at (7,0){};
\fill(a)circle(0.7mm);
\fill(b)circle(0.7mm);
\fill(c)circle(0.7mm);
\fill(d)circle(0.7mm);
\fill(e)circle(0.7mm);
\fill(f)circle(0.7mm);
\fill(g)circle(0.7mm);
\fill(h)circle(0.7mm);
\fill(i)circle(0.7mm);
\fill(j)circle(0.7mm);
\fill(k)circle(0.7mm);
\fill(l)circle(0.7mm);
\fill(m)circle(0.7mm);
\draw (g)--(a)--(b)--(c)--(d)--(e)--(f)--(m);
\draw (h)--(c);
\draw (i)--(c);
\draw (j)--(c);
\draw (k)--(e);
\draw (l)--(f);
\end{tikzpicture}
\caption{A caterpillar graph} \label{caterpillar}
\end{figure}

One will assume that each end vertex of a central path has at least one leg.  If not, then it can be regarded as a leg of a shorter
central path.

\begin{corollary}
\label{cat}
Let $G$ be a caterpillar graph.  Then $G$ is levelable if and only if every vertex of a central path has at least one leg.    
\end{corollary}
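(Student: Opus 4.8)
The plan is to obtain this corollary directly from Theorem \ref{thm.trees}, since a caterpillar is in particular a tree; the only real task is to translate the condition ``every non-free vertex is adjacent to at least one free vertex'' into the language of central paths and legs. Before that I would dispose of the degenerate caterpillars in one line: a single vertex or a single edge is well-covered, and a star $K_{1,m}$ is levelable by Corollary \ref{cor:multipartite} (being complete bipartite), and in each of these cases the claimed equivalence holds trivially under the standing convention on central paths. So I may assume $G$ has a central path $P\colon p_1 - p_2 - \cdots - p_k$ with $k \ge 2$ and (by that convention) a leg attached to each of $p_1$ and $p_k$.

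First I would pin down the free vertices of $G$. Every leg has degree $1$ and is therefore free. Conversely, every path vertex is non-free: an interior $p_i$ ($2 \le i \le k-1$) has the two path-neighbours $p_{i-1}, p_{i+1}$, while each end vertex $p_1, p_k$ has its unique path-neighbour together with at least one leg. Hence in $G$ the set of non-free vertices is exactly $\{p_1,\ldots,p_k\}$ and the set of free vertices is exactly the set of legs. Next, for a fixed path vertex $p_i$, its neighbours are its path-neighbours (which lie in $\{p_1,\ldots,p_k\}$, hence are non-free) together with the legs attached to $p_i$ (which are free); consequently $p_i$ is adjacent to a free vertex if and only if $p_i$ carries at least one leg. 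Feeding this into Theorem \ref{thm.trees} gives: $G$ is levelable iff every non-free vertex has a free neighbour iff every $p_i$ has at least one leg, which is the assertion of the corollary. Concretely, for the ``only if'' direction, if some $p_i$ has no leg then $p_i$ is a non-free vertex all of whose neighbours are non-free, so Theorem \ref{thm.trees} forbids levelability; for ``if'', the levelability follows from Theorem \ref{thm.trees} (equivalently from the construction of Theorem \ref{Sapporo} used in its proof).

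I do not anticipate a serious obstacle, since the argument is essentially a dictionary translation. The one point genuinely requiring care is the bookkeeping at the two ends of the central path — verifying that the standing convention (``an end vertex with no leg is recorded as a leg of a shorter central path'') really does force $p_1$ and $p_k$ to be non-free, so that the set of non-free vertices coincides exactly with the vertex set of the central path. Once that identification is in place, Theorem \ref{thm.trees} yields both directions at once.
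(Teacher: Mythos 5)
Your proof is correct and is exactly the argument the paper intends: the corollary is stated as an immediate consequence of Theorem \ref{thm.trees}, with free vertices of a caterpillar being precisely the legs and non-free vertices precisely the central-path vertices (the end vertices being non-free thanks to the standing convention), so the theorem's condition translates verbatim into ``every central-path vertex has a leg.'' Your careful handling of the tree structure (legs have degree one, path vertices have degree at least two) and of the degenerate cases is a faithful spelling-out of what the paper leaves implicit.
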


Theorem \ref{thm.trees} gives an alternative proof to 
Corollary \ref{cor.paths}.

\begin{corollary}
\label{PATH}
The path $P_n$ is levelable if and only 
if $n \in \{2,3,4\}$.
\end{corollary}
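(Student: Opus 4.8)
The plan is to deduce this directly from the classification of levelable trees in Theorem \ref{thm.trees}, since $P_n$ is a tree. The first step is to identify the free and non-free vertices of $P_n = (\{x_1,\ldots,x_n\}, \{\{x_i,x_{i+1}\}\})$: the only free vertices (those lying on exactly one edge) are the endpoints $x_1$ and $x_n$, while each $x_i$ with $2 \le i \le n-1$ is non-free, being incident to both $\{x_{i-1},x_i\}$ and $\{x_i,x_{i+1}\}$.

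The second step is to check, for each non-free vertex $x_i$, whether it is adjacent to a free vertex. Its only neighbours are $x_{i-1}$ and $x_{i+1}$, and among all vertices only $x_1$ and $x_n$ are free; hence $x_i$ is adjacent to a free vertex exactly when $i-1 = 1$ or $i+1 = n$, i.e. when $i \in \{2, n-1\}$. So the hypothesis of Theorem \ref{thm.trees} — every non-free vertex is adjacent to at least one free vertex — holds if and only if every internal index $i \in \{2,\ldots,n-1\}$ lies in $\{2, n-1\}$. For $n \ge 5$ the index $3$ is internal but equals neither $2$ nor $n-1$, witnessing a non-free vertex ($x_3$) adjacent only to the non-free vertices $x_2$ and $x_4$, so the condition fails; for $n \le 4$ the set of internal indices is empty, $\{2\}$, or $\{2,3\} = \{2, n-1\}$, so the condition holds. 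Invoking Theorem \ref{thm.trees} then gives that $P_n$ is levelable precisely when $n \in \{2,3,4\}$.

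There is no real obstacle here: the argument is a short bookkeeping application of the tree classification, and the only point requiring a moment's care is the degenerate case $n = 2$, where $P_2$ has no non-free vertex at all and the hypothesis of Theorem \ref{thm.trees} is vacuously satisfied (consistently with $P_2$ being well-covered, hence levelable).
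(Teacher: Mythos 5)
Your argument is correct and is exactly the route the paper intends: Corollary \ref{PATH} is stated as an immediate consequence of the tree classification in Theorem \ref{thm.trees}, and your bookkeeping of free versus non-free vertices of $P_n$ (including the vacuous case $n=2$) fills in precisely that deduction. No gaps; this matches the paper's approach.
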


\begin{definition}
Let $q \geq 3$ be an integer and $n_1, \ldots, n_q$ positive integers.  Let $P_i$ denote the path of length $n_i$ on the vertex set $V_i=\{x, x_1^{(i)}, \ldots, x_{n_i}^{(i)}\}$ for $1 \leq i \leq q$.  We assume $V_i \cap V_j = \{x\}$ for $i \neq j$.  Let $G(n_1, \ldots, n_q) = P_1 \cup \cdots \cup P_q$.  We call $G(n_1, \ldots, n_q)$ a {\it big star graph} with a center $x$. 
\end{definition}

\begin{figure}[h]
\centering
\begin{tikzpicture}[scale=1.35]
\coordinate (a) at (0,0){};
\coordinate (b) at (.8,0){};
\coordinate (c) at (1.6,0){};
\coordinate (d) at (2.4,0){};
\coordinate (e) at (-.8,0){};
\coordinate (f) at (-1.6,0){};
\coordinate (g) at (0,.8){};
\coordinate (h) at (0,-.8){};
\coordinate (i) at (0,-1.6){};
\fill(a)circle(0.7mm);
\fill(b)circle(0.7mm);
\fill(c)circle(0.7mm);
\fill(d)circle(0.7mm);
\fill(e)circle(0.7mm);
\fill(f)circle(0.7mm);
\fill(g)circle(0.7mm);
\fill(h)circle(0.7mm);
\fill(i)circle(0.7mm);
\draw (f)--(e)--(a)--(b)--(c)--(d);
\draw (g)--(a)--(h)--(i);
\end{tikzpicture}
\caption{The big star graph $G(1,2,2,3)$} \label{bigstar}
\end{figure}
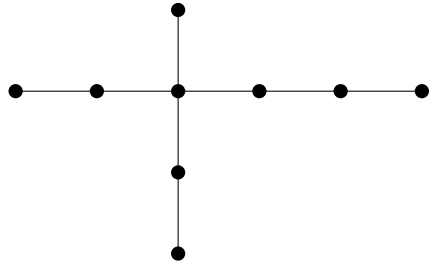

\begin{corollary}
\label{star}
A big star graph $G(n_1, \ldots, n_q)$ is levelable if and only if each $n_i \leq 2$ and there is at least 
one $j$ with $n_j = 1$.  
\end{corollary}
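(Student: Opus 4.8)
The plan is to observe first that $G(n_1,\ldots,n_q)$ is a tree: it is the union of the paths $P_1,\ldots,P_q$ glued along the single common vertex $x$, with all other vertices distinct, so it contains no cycle. Consequently the whole problem is governed by Theorem \ref{thm.trees}, which says that a tree is levelable if and only if every non-free vertex is adjacent to at least one free vertex. The argument then reduces to bookkeeping of which vertices of the big star graph are free.

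First I would record the free vertices. For each leg $P_i$ the endpoint $x_{n_i}^{(i)}$ is a leaf; in the degenerate case $n_i = 1$ this leaf is $x_1^{(i)}$ itself, a free vertex that is moreover adjacent to the center $x$. The non-free vertices are the center $x$, which has degree $q\ge 3$, together with the interior path vertices $x_j^{(i)}$ with $1\le j\le n_i-1$ lying on legs with $n_i\ge 2$.

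Next I would test the condition of Theorem \ref{thm.trees} vertex by vertex. The neighbors of $x$ are exactly $x_1^{(1)},\ldots,x_1^{(q)}$, and $x_1^{(i)}$ is free precisely when $n_i=1$; hence $x$ is adjacent to a free vertex if and only if some $n_j=1$, which produces the second clause of the criterion. For an interior vertex $x_j^{(i)}$ (so $n_i\ge 2$), its neighbors along $P_i$ are $x_{j-1}^{(i)}$ — or $x$ when $j=1$ — and $x_{j+1}^{(i)}$; the only one that can be free is $x_{j+1}^{(i)}$, and only when $j=n_i-1$. If $n_i=2$ the unique interior vertex $x_1^{(i)}$ has $j=1=n_i-1$ and is adjacent to the leaf $x_2^{(i)}$, so the condition holds on that leg; if $n_i\ge 3$, the interior vertex $x_1^{(i)}$ has $j=1\ne n_i-1$, so it has no free neighbor and the condition fails.

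Combining the two checks, the hypothesis of Theorem \ref{thm.trees} is satisfied by $G(n_1,\ldots,n_q)$ if and only if every $n_i\le 2$ and at least one $n_j=1$, which is exactly the asserted classification. I do not expect a genuine obstacle here; the only points requiring care are not to overlook the interior vertex $x_1^{(i)}$ on a leg with $n_i\ge 3$ — this is precisely the vertex obstructing levelability — and to treat the degenerate leg $n_i=1$, where the leaf also serves as a neighbor of the center, as a separate bookkeeping case. (If desired, for the ``if'' direction one can alternatively exhibit the explicit weighting coming from Theorem \ref{thm.trees}, but appealing directly to that theorem keeps the proof uniform.)
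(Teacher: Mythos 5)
Your proof is correct and follows exactly the route the paper intends: the big star graph is a tree, so Corollary \ref{star} is immediate from Theorem \ref{thm.trees}, and your case analysis of which non-free vertices (the center $x$ and the interior leg vertices) have free neighbors is accurate and yields precisely the stated criterion. No gaps.
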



\section{Circulant graphs}

In this section, we concern ourselves with classifying circulant levelable graphs. 

\begin{definition}
    Let $n\geq 2$ and $S \subseteq \{1,\dots, \lfloor \frac{n}{2} \rfloor\}$. The \textit{circulant graph} $C_n(S)$ is the graph on the vertex set $\{x_1,\dots, x_n\}$ such that $\{x_i,x_j\} \in E(C_n(S))$ precisely when $|i-j| \in S$ or $n-|i-j| \in S$.
\end{definition}

Figure \ref{circulantex} shows an example of the circulant graph $C_{10}(2,5)$.  Theorem \ref{cor:multipartite} and
Corollary \ref{cor.complementcycle} already
classify when the circulant 
graphs $C_n(1,2,\ldots,\lfloor \frac{n}{2}\rfloor)$
and $C_n(2,\ldots,\lfloor \frac{n}{2} \rfloor)$
are levelable since 
$C_n(1,2,\ldots,\lfloor \frac{n}{2}\rfloor) \cong K_n$
and $C_n(2,\ldots,\lfloor \frac{n}{2} \rfloor) \cong C_n^c$.  We
now classify two further families of circulant graphs.

We first consider the case when $S=\{1\}$, which corresponds to the \textit{cycle graph} $C_n$ with vertices $V(C_n)=\{x_1,\dots,x_n\}$ and edges $E(C_n)= \{\{x_1,x_2\},\dots, \{x_{n-1},x_n\}, \{x_n,x_1\}\}$.

\begin{figure}[h]
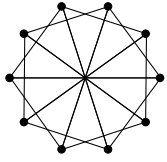

\Circulant{10}{2,5} 
\caption{The circulant $C_{10}(2,5)$}\label{circulantex}
\end{figure}

\begin{theorem}
    Let $G = C_n=C_n(1)$. Then $G$ is levelable if and only if $n \in \{2,3,4,5,7\}$. 
\end{theorem}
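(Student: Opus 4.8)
The plan is to treat the two directions separately, using the standard ``gap'' description of maximal independent sets of a cycle: reading $x_1,\dots,x_n$ cyclically, an independent set $W$ of $C_n$ is maximal if and only if every maximal block of consecutive vertices omitted by $W$ has length $1$ or $2$ (a block of length $\ge 3$ has an interior vertex that can be added to $W$, while a block of length $0$ would mean two adjacent vertices of $W$). In particular, any set all of whose cyclic omitted-blocks have length $1$ or $2$ is automatically a maximal independent set, and since there are exactly $|W|$ such blocks, of total length $n-|W|$, every maximal independent set satisfies $\lceil n/3\rceil \le |W| \le \lfloor n/2\rfloor$.

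\emph{The positive direction.} Since $\alpha(C_2)=\alpha(C_3)=1$ and $\alpha(C_4)=2$, Theorem~\ref{thm.smallalapha} already gives that $C_2$, $C_3$, and $C_4$ are levelable. For $n\in\{5,7\}$ the inequality $\lceil n/3\rceil \le |W|\le \lfloor n/2\rfloor$ forces $|W|$ to be the constant $2$ (resp.\ $3$) for every maximal independent set $W$, so $C_5$ and $C_7$ are well-covered and hence levelable with the weight function $(1,\dots,1)$.

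\emph{The negative direction ($n\ge 6$, $n\ne 7$).} For each such $n$ I would exhibit four maximal independent sets $F_1,F_2,F_3,F_4$ of $C_n$ with $F_3\cap F_4=\emptyset$ and $F_3\cup F_4\subsetneq F_1\cup F_2$, whereupon Lemma~\ref{lem.noncondition} shows $C_n$ is not levelable. In every case I take $F_2=F_1+1$ and $F_4=F_3+1$, where $+1$ denotes the index shift $x_i\mapsto x_{i+1}$ (indices read mod $n$), which is an automorphism of $C_n$; note that $F_3\cap F_4=\emptyset$ is automatic, because $F_3$ being independent means no two consecutive vertices lie in $F_3$. Writing $V(C_n)=\{x_1,\dots,x_n\}$, I would set, when $n=2m$ is even (so $m\ge 3$),
\[
F_1=\{x_1,x_3,\dots,x_{2m-1}\},\qquad F_3=\{x_1,x_3,\dots,x_{2m-5}\}\cup\{x_{2m-2}\},
\]
and, when $n=2m+1$ is odd with $n\ge 9$ (so $m\ge 4$),
\[
F_1=\{x_1,x_3,\dots,x_{2m-1}\},\qquad F_3=\{x_1,x_3,\dots,x_{2m-7}\}\cup\{x_{2m-4},x_{2m-1}\}.
\]
The gap criterion shows all four sets are maximal independent (in the even case $F_3$ has $m-3$ omitted-blocks of length $1$ and two of length $2$; in the odd case $m-4$ of length $1$ and three of length $2$). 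A count of indices gives $F_1\cup F_2 = V(C_n)$ when $n$ is even and $F_1\cup F_2 = V(C_n)\setminus\{x_n\}$ when $n$ is odd, while $F_3\cup F_4 = V(C_n)\setminus\{x_{2m-3},x_{2m}\}$ in the even case and $F_3\cup F_4 = V(C_n)\setminus\{x_{2m-5},x_{2m-2},x_n\}$ in the odd case; in both situations the omitted vertices are distinct and lie in $F_1\cup F_2$, so $F_3\cup F_4\subsetneq F_1\cup F_2$. Since every $n\ge 6$ with $n\ne 7$ is either even with $m\ge 3$ or odd with $m\ge 4$, this completes the negative direction.

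The only part that needs care — and it is index bookkeeping rather than a new idea — is verifying that the prescribed gap patterns really close up into a single cycle of length $n$ (equivalently, that the element counts and block-length sums add up to $|F_i|$ and to $n$), that the omitted vertices are genuinely distinct and lie in $F_1\cup F_2$ so the inclusion is strict, and that the extreme cases do not collapse the construction: $m=3$ gives $F_3=\{x_1,x_4\}$, $F_4=\{x_2,x_5\}$ in $C_6$, and $m=4$ gives $F_3=\{x_1,x_4,x_7\}$, $F_4=\{x_2,x_5,x_8\}$ in $C_9$. Beyond this I anticipate no conceptual obstacle.
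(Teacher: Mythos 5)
Your proof is correct and follows essentially the same route as the paper: after substituting $n=2m$ or $n=2m+1$, the sets $F_1,\dots,F_4$ you exhibit are exactly the ones in the paper's proof (including $n=6$, which the paper treats separately but with the same sets), and both arguments conclude via Lemma \ref{lem.noncondition}, with your gap-count criterion serving only as a cleaner way to verify maximality and the well-coveredness of $C_5$ and $C_7$. The one cosmetic slip is the claim that all vertices omitted from $F_3\cup F_4$ lie in $F_1\cup F_2$ in the odd case ($x_n$ does not), but the strict inclusion $F_3\cup F_4\subsetneq F_1\cup F_2$ follows immediately from the set computations you give, so nothing is affected.
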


\begin{proof}
One can easily check that $C_n$ is well-covered for $n\in \{2,3,4,5,7\}$ and hence levelable. For $n=6$, the maximal independent sets of $C_6$ are $\{1,3,5\}, \{2,4,6\}, \{1,4\}, \{2,5\}$, and $\{3,6\}$. Taking $F_1=\{1,3,5\}$, $F_2=\{2,4,6\}$, $F_3=\{1,4\}$, and $F_4=\{2,5\}$, we see that the hypotheses of Lemma \ref{lem.noncondition}
are satisfied and so $C_6$ is not levelable.

We now show that $G=C_n$ is not levelable for $n\geq 8$.
If $n\geq 8$ is even, we define
    \begin{align*}
        F_1 &= \{ x_1,x_3,\dots,x_{n-1} \}, 
        ~~F_2 = \{ x_2,x_4,\dots,x_{n} \}, \\
        F_3 &= \{ x_1,x_3,\dots,x_{n-5},x_{n-2}\}, 
        ~~\mbox{and}
        ~~F_4 = \{x_2,x_4,\dots,x_{n-4},x_{n-1}\},
    \end{align*}
    and if $n\geq 9$ is odd, we define
    \begin{align*}
        F_1 &= \{ x_1,x_3,\dots,x_{n-2} \}, ~~
        F_2 = \{ x_2,x_4,\dots,x_{n-1} \}, \\
        F_3 &= \{ x_1,x_3,\dots,x_{n-8},x_{n-5},x_{n-2}\},
        ~~\mbox{and}~~
        F_4 = \{x_2,x_4,\dots,x_{n-7},x_{n-4},x_{n-1}\}.
    \end{align*}
    In either case, one can check that the hypotheses of Lemma \ref{lem.noncondition} are satisfied, and so it follows that $G$ is not levelable.
\end{proof}

A \textit{cubic circulant graph} is a circulant graph such that every vertex has degree 3. We seek to classify those cubic circulants that are levelable. We begin with a structural result for cubic circulants due to Davis and Domke.

\begin{theorem}[\cite{DDG}]\label{isomorphic-a-n}
Let $1 \leq a < n$  and $t={\rm gcd}(2n,a)$. 
\begin{enumerate}
\item[$(a)$] If $\frac{2n}{t}$ is even, then $C_{2n}(a,n)$ is isomorphic to $t$ copies of $C_{\frac{2n}{t}}(1,\frac{n}{t})$. 
\item[$(b)$] If $\frac{2n}{t}$ is odd, then $C_{2n}(a,n)$ is isomorphic to $\frac{t}{2}$ copies of $C_{\frac{4n}{t}}(2,\frac{2n}{t})$. 
\end{enumerate}
\end{theorem}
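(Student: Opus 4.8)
\emph{Proof sketch.} The plan is to identify the vertex $x_i$ with $i\in\ZZ/2n\ZZ$ and view $C_{2n}(a,n)$ as the Cayley graph $\operatorname{Cay}(\ZZ/2n\ZZ,T)$ with connection set $T=\{a,-a,n\}$; note $n=-n$ in $\ZZ/2n\ZZ$, which is exactly why the graph is cubic when $1\le a<n$. The connected components of such a Cayley graph are the cosets of the subgroup $H:=\langle T\rangle=\langle a,n\rangle$, and translation by a coset representative identifies the induced subgraph on a coset with $\operatorname{Cay}(H,T)$. Choosing a generator $g$ of the cyclic group $H$ and writing $a=\alpha g$, $n=\nu g$ turns this into the circulant $C_{|H|}(\alpha,\nu)$. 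Since $\langle a\rangle=\langle\gcd(2n,a)\rangle=\langle t\rangle$, we have $H=\langle t,n\rangle=\langle\gcd(t,n)\rangle$, so the whole argument reduces to computing $\gcd(t,n)$ and then normalizing the resulting circulant.

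First I would carry out the $2$-adic bookkeeping. Because $t\mid 2n$, the quotient $2n/t$ is even if and only if $t\mid n$; in that case $H=\langle t\rangle$ has order $2n/t$, so $C_{2n}(a,n)$ splits into $2n/(2n/t)=t$ components, establishing the count in $(a)$. If $2n/t$ is odd, then $v_2(t)=v_2(2n)=v_2(n)+1$, while $v_p(t)=\min(v_p(2n),v_p(a))\le v_p(n)$ for every odd prime $p$; hence $\gcd(t,n)=t/2$ (in particular $t$ is even), so $H=\langle t/2\rangle$ has order $4n/t$ and there are $2n/(4n/t)=t/2$ components, establishing the count in $(b)$.

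It then remains to normalize one component. In case $(a)$, with $g=t$ we get $\alpha=a/t$ and $\nu=n/t=(2n/t)/2$; since $\gcd(a/t,2n/t)=1$ and $2n/t$ is even, $a/t$ and hence $(a/t)^{-1}\bmod 2n/t$ are odd, so multiplication by $(a/t)^{-1}$ is an automorphism of $\ZZ/(2n/t)\ZZ$ sending $a/t\mapsto 1$ and fixing $n/t$ (an odd multiple of $(2n/t)/2$ is again $\equiv(2n/t)/2$), giving the isomorphism with $C_{2n/t}(1,n/t)$. In case $(b)$, with $g=t/2$ and $M:=4n/t$ we get $\alpha=2a/t$ and $\nu=2n/t=M/2$, which is odd; since $a/t$ is a unit mod $M/2$, I would pick $u$ with $u(a/t)\equiv 1\pmod{M/2}$, replace $u$ by $u+M/2$ if necessary so that $u$ is odd, check that this forces $\gcd(u,M)=1$, and observe that multiplication by $u$ carries the connection set $\{\pm 2a/t,\,M/2\}$ onto $\{\pm 2,\,M/2\}$ (using $2u(a/t)\equiv 2\pmod M$ and $uM/2\equiv M/2\pmod M$), yielding the isomorphism with $C_M(2,2n/t)$. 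A few routine points remain: $1\le a<n$ forces $2n/t\ge 4$ in case $(a)$ and $2n/t\ge 3$ in case $(b)$, so every circulant that appears is a genuine cubic graph with two distinct nonzero jumps, and $t$ is even in case $(b)$ so $t/2\in\ZZ$.

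The main obstacle is precisely the parity analysis: pinning down $\gcd(t,n)=t/2$ in the odd case, and—more delicately—arranging the normalizing unit to be \emph{odd} so that the ``diameter'' jump $n$ (equivalently $M/2$ or $(2n/t)/2$) is sent to itself rather than to its negative; everything else is the standard dictionary between cyclic Cayley graphs, their connected components as cosets of $\langle T\rangle$, and circulant graphs up to multiplication by a unit.
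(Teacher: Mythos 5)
Your argument is correct, and in fact it supplies something the paper does not contain: Theorem \ref{isomorphic-a-n} is quoted from Davis and Domke \cite{DDG} without proof, so there is no in-paper argument to compare against. Your route — viewing $C_{2n}(a,n)$ as $\mathrm{Cay}(\mathbb{Z}/2n\mathbb{Z},\{a,-a,n\})$, identifying the components with cosets of $H=\langle a,n\rangle=\langle\gcd(t,n)\rangle$, and then normalizing one component by multiplication by a unit — is a clean, self-contained proof. The two delicate points are exactly the ones you flag, and you handle both correctly: the $2$-adic computation giving $\gcd(t,n)=t$ when $2n/t$ is even (equivalently $t\mid n$) and $\gcd(t,n)=t/2$ when $2n/t$ is odd, which yields the component counts $t$ and $t/2$; and the choice of an \emph{odd} normalizing unit, so that the diameter jump $n$ (i.e.\ $(2n/t)/2$, resp.\ $M/2$ with $M=4n/t$) is fixed rather than negated — your adjustment $u\mapsto u+M/2$ works precisely because $M/2$ is odd in case $(b)$, and in case $(a)$ any unit mod the even modulus $2n/t$ is automatically odd. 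The remaining checks (that $t\le a<n$ forces $2n/t\ge 4$ in case $(a)$ and $2n/t\ge 3$ in case $(b)$, so the quotient circulants are genuine cubic graphs, and that $t$ is even in case $(b)$) are routine and you note them. Compared with the original source, which proceeds by explicit relabelings of vertices, your Cayley-graph/coset formulation makes the component structure and the unit-multiplication isomorphisms conceptually transparent, at the modest cost of invoking the standard dictionary between circulants and Cayley graphs of cyclic groups.
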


By Lemma \ref{lemma.connected}, it now suffices to determine for which $n \geq 2$ the graphs $C_{2n}(1,n)$ and $C_{2n}(2,n)$ are levelable. (Note that we may assume $n$ is odd for the latter.) Explicitly, these graphs have the vertex sets $V(C_{2n}(1,n))=\{x_1,\dots,x_{2n}\}$ and $V(C_{2n}(2,n))=\{x_1,\dots,x_{n},y_1\dots,y_n\}$ and edge sets
\[
E(C_{2n}(1,n))=\{\{x_i,x_{i+1}\}\mid 1\leq i \leq 2n-1\} \cup \{\{x_1,x_{2n}\}\} \cup \{\{x_i,x_{n+i}\} \mid 1\leq i\leq n\}
\]
and
\begin{align*}
E(C_{2n}(2,n)) =~&\{\{x_i,x_{i+1}\}\mid 1\leq i \leq n-1\} \cup \{\{y_i,y_{i+1}\}\mid 1\leq i \leq n-1\} \\
& \cup \{\{x_1,x_{n}\},\{y_1,y_{n}\}\} \cup \{\{x_i,y_{i}\} \mid 1\leq i\leq n\}
\end{align*}
respectively. Figure \ref{cubic-picture} shows the labeling of these graphs will use in our argument.

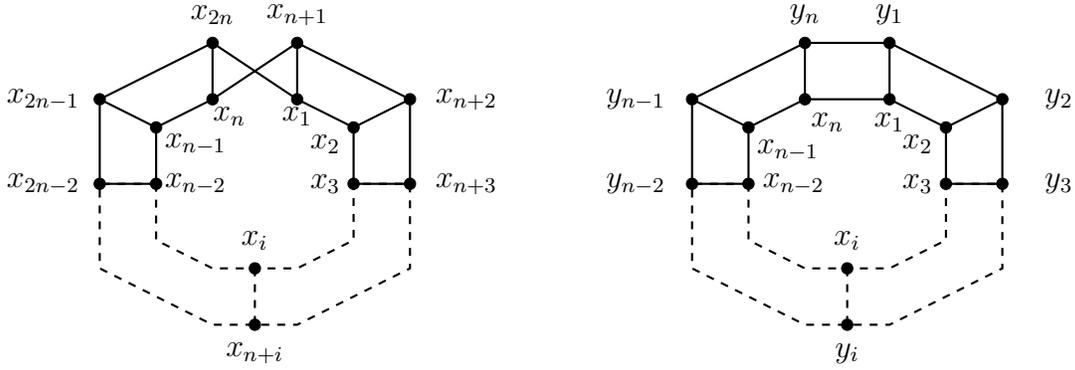
\begin{figure}[h]
\centering
\begin{tikzpicture}
[scale=.75]

\draw[thick] (-7,0) --(-7,1)--(-6,1.5)--(-4.5,2.5)--(-2.5,1.5)--(-2.5,0)--(-3.5,0)--(-3.5,1)--(-4.5,1.5)--(-6,2.5)--(-8,1.5)--(-8,0)--(-7,0);
\draw[thick] (-7,1)--(-8,1.5);
\draw[thick] (-6,1.5)--(-6,2.5);
\draw[thick] (-4.5,1.5)--(-4.5,2.5);
\draw[thick] (-3.5,1)--(-2.5,1.5);

\draw[thick,dashed] (-7,0) --(-7,-1)--(-6,-1.5)--(-4.5,-1.5)--(-3.5,-1)--(-3.5,0)--(-2.5,0)--(-2.5,-1.5)--(-4.5,-2.5)--(-6,-2.5)--(-8,-1.5)--(-8,0)--(-7,0);
\draw[thick,dashed] (-5.25,-1.5) --(-5.25,-2.5);

\draw [fill] (-7,0)  circle [radius=0.1];
\draw [fill] (-7,1) circle [radius=0.1];
\draw [fill] (-6,1.5) circle [radius=0.1];
\draw [fill] (-4.5,2.5) circle [radius=0.1];
\draw [fill] (-2.5,1.5) circle [radius=0.1];
\draw [fill] (-2.5,0) circle [radius=0.1];
\draw [fill] (-3.5,0) circle [radius=0.1];
\draw [fill] (-3.5,1) circle [radius=0.1];
\draw [fill] (-4.5,1.5) circle [radius=0.1];
\draw [fill] (-6,2.5)circle [radius=0.1];
\draw [fill] (-8,1.5) circle [radius=0.1];
\draw [fill] (-8,0) circle [radius=0.1];

\draw [fill] (-5.25,-1.5) circle [radius=0.1];
\draw [fill] (-5.25,-2.5) circle [radius=0.1];

\node at (-6.3,0) {$x_{n-2}$};
\node at (-6.3,0.7) {$x_{n-1}$};
\node at (-5.7,1.2) {$x_{n}$};
\node at (-4.5,3) {$x_{n+1}$};
\node at (-1.5,1.5) {$x_{n+2}$};
\node at (-1.5,0) {$x_{n+3}$};
\node at (-4,0) {$x_{3}$};
\node at (-4,0.7) {$x_{2}$};
\node at (-4.5,1.2) {$x_{1}$};
\node at (-6,3) {$x_{2n}$};
\node at (-9,1.5) {$x_{2n-1}$};
\node at (-9,0) {$x_{2n-2}$};

\node at (-5.25,-1) {$x_{i}$};
\node at (-5.25,-3) {$x_{n+i}$};

\draw[thick] (3.5,0) --(3.5,1)--(4.5,1.5)--(6,1.5)--(7,1)--(7,0)--(8,0)--(8,1.5)--(6,2.5)--(4.5,2.5)--(2.5,1.5)--(2.5,0)--(3.5,0);
\draw[thick] (3.5,1)--(2.5,1.5);
\draw[thick] (4.5,1.5)--(4.5,2.5);
\draw[thick] (6,1.5)--(6,2.5);
\draw[thick] (7,1)--(8,1.5);

\draw[thick,dashed] (3.5,0) --(3.5,-1)--(4.5,-1.5)--(6,-1.5)--(7,-1)--(7,0)--(8,0)--(8,-1.5)--(6,-2.5)--(4.5,-2.5)--(2.5,-1.5)--(2.5,0)--(3.5,0);
\draw[thick,dashed] (5.25,-1.5) --(5.25,-2.5);

\draw [fill] (3.5,0)  circle [radius=0.1];
\draw [fill] (3.5,1) circle [radius=0.1];
\draw [fill] (4.5,1.5) circle [radius=0.1];
\draw [fill] (6,1.5) circle [radius=0.1];
\draw [fill] (7,1) circle [radius=0.1];
\draw [fill] (7,0) circle [radius=0.1];
\draw [fill] (8,0) circle [radius=0.1];
\draw [fill] (8,1.5) circle [radius=0.1];
\draw [fill] (6,2.5) circle [radius=0.1];
\draw [fill] (4.5,2.5)circle [radius=0.1];
\draw [fill] (2.5,1.5) circle [radius=0.1];
\draw [fill] (2.5,0) circle [radius=0.1];

\draw [fill] (5.25,-1.5) circle [radius=0.1];
\draw [fill] (5.25,-2.5) circle [radius=0.1];

\node at (4.3,0) {$x_{n-2}$};
\node at (4.2,0.6) {$x_{n-1}$};
\node at (4.9,1.1) {$x_{n}$};
\node at (6,1.1) {$x_{1}$};
\node at (6.5,0.7) {$x_{2}$};
\node at (6.5,0) {$x_{3}$};
\node at (9,0) {$y_{3}$};
\node at (9,1.5) {$y_{2}$};
\node at (6,3) {$y_{1}$};
\node at (4.5,3) {$y_{n}$};
\node at (1.5,1.5) {$y_{n-1}$};
\node at (1.5,0) {$y_{n-2}$};

\node at (5.25,-1) {$x_{i}$};
\node at (5.25,-3) {$y_{i}$};

\end{tikzpicture}
\caption{The graphs $C_{2n}(1,n)$ and $C_{2n}(2,n)$ (with $n$ odd).}\label{cubic-picture}
\end{figure}

\begin{theorem}\label{thm.circulant1}
Let $G=C_{2n}(1,n)$. Then $G$ is levelable if and only if $n \in \{2,3,4\}$.
\end{theorem}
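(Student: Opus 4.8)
The plan is to follow the template of the cycle theorem just proved: dispose of the three levelable values of $n$ by hand, and for $n\ge5$ force non-levelability through Lemma~\ref{lem.noncondition} by writing down four maximal independent sets $F_1,F_2,F_3,F_4$ of $G=C_{2n}(1,n)$ with $F_3\cap F_4=\emptyset$ and $F_3\cup F_4\subsetneq F_1\cup F_2$.

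For the ``if'' direction I would simply check that $G$ is well-covered for $n\in\{2,3,4\}$: one has $C_4(1,2)\cong K_4$ and $C_6(1,3)\cong K_{3,3}$ (so Corollary~\ref{cor:multipartite} already applies), while $C_8(1,4)$ is the M\"obius ladder on $8$ vertices, for which $\alpha=3$ and a short degree count shows there is no maximal independent set of size $1$ or $2$; hence every maximal independent set has size $3$ and $(1,\dots,1)$ is a valid weight function.

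For the ``only if'' direction, fix $n\ge5$ and set $G=C_{2n}(1,n)$. Two facts guide the construction. First, $x_i\mapsto x_{i+1}$ (indices mod $2n$) is an automorphism of $G$, and no independent set contains two cyclically consecutive vertices; hence for any $S\in{\rm MaxInd}(G)$ the translate $S+1$ lies in ${\rm MaxInd}(G)$ and is disjoint from $S$. Second, the parity of $n$ is decisive. When $n$ is odd, $G$ is bipartite, and the classes of odd- and even-indexed vertices are maximal independent sets with union $V$; taking these as $F_1,F_2$, it remains only to find a single $S\in{\rm MaxInd}(G)$ with $|S|<n=\alpha(G)$, since then $F_3:=S$, $F_4:=S+1$ are disjoint with $|F_3\cup F_4|\le 2n-2<|V|$. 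I would supply such an $S$ explicitly -- a sparse dominating independent set built from one short ``pendant'' step near a vertex followed by a period-$2$ walk around the cycle -- and verify it dominates $G$ by checking both the cycle edges and the spoke edges $\{x_i,x_{i+n}\}$.

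The case $n$ even, $n\ge 6$, is the real obstacle, because $G$ is then non-bipartite, so no two maximal independent sets cover $V$; thus $F_1\cup F_2$ must itself be a judiciously chosen proper subset. Here I would use that deleting an antipodal pair $\{x_a,x_{a+n}\}$ from $G$ leaves the ladder $P_{n-1}\,\square\,K_2$, a bipartite graph whose two colour classes $C_1,C_2$ are maximal independent sets of $G$ -- each of $x_a,x_{a+n}$ is adjacent to both classes -- with $C_1\cup C_2=V\setminus\{x_a,x_{a+n}\}$; take $F_1=C_1$, $F_2=C_2$. It then remains to produce disjoint $F_3,F_4\in{\rm MaxInd}(G)$ each avoiding $x_a$ and $x_{a+n}$ whose union misses at least one further vertex. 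Since such a set must still dominate $x_a$ and $x_{a+n}$ using only its surviving cycle-neighbours, the choice is rigid: I expect the explicit description of $F_3,F_4$ (and the correct choice of $a$) to depend on $n\bmod 4$ -- for $n\equiv 2\pmod 4$ one can take the residue classes $\{x_i:i\equiv1\}$ and $\{x_i:i\equiv3\}\pmod 4$ -- with the bulk of the remaining work being the routine but slightly delicate verification that all the exhibited sets are genuinely maximal independent sets of $C_{2n}(1,n)$, not merely of the auxiliary ladder or cycle.
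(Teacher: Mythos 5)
Your overall strategy is the same as the paper's: check well-coveredness for $n\in\{2,3,4\}$, and for $n\ge 5$ apply Lemma~\ref{lem.noncondition} to four explicit maximal independent sets, splitting on the parity of $n$. Two of your ideas are genuinely nice and correct: the translation trick (if $S\in{\rm MaxInd}(G)$ then $S+1\in{\rm MaxInd}(G)$ and $S\cap(S+1)=\emptyset$, since cyclically consecutive vertices are adjacent) reduces the odd case to exhibiting a single maximal independent set of size less than $n$ — and indeed the paper's own $F_4$ in that case is exactly $F_3+1$; and your $n\equiv 2\pmod 4$ construction checks out, since the classes $\{x_i: i\equiv 1 \pmod 4\}$ and $\{x_i:i\equiv 3\pmod 4\}$ are independent (no two members differ by $1$ or by $n$, as $4\nmid n$) and maximal (a vertex with index $\equiv 0,2$ has a cycle-neighbour in each class, and one with index $\equiv 1,3$ sees the other class through its spoke), and with $a$ even they lie properly inside $F_1\cup F_2$.

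There are, however, two genuine gaps. First, in the odd case you never actually exhibit the small maximal independent set $S$: ``one short pendant step followed by a period-$2$ walk'' is not a verifiable construction, and producing and checking such a set is the substance of that case (it is true that one exists, e.g.\ the paper's $F_3=\{x_1,x_3,\dots,x_{n-4},x_{n-1}\}\cup\{x_{n+2},x_{n+4},\dots,x_{2n-3}\}$ of size $n-2$, so this gap is patchable). Second, and more seriously, for even $n$ you only treat $n\equiv 2\pmod 4$; for $4\mid n$ (i.e.\ $n=8,12,\dots$) you give no $F_3,F_4$ at all, only the expectation that a construction depending on $n\bmod 4$ exists, and your residue-class idea provably cannot extend there: when $4\mid n$ each class $\{x_i: i\equiv r\pmod 4\}$ contains antipodal pairs $x_i,x_{i+n}$, which are adjacent, so those classes are not even independent. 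Thus half of the even case is missing. For comparison, the paper needs no $\bmod\ 4$ split: for every even $n\ge 6$ it takes $F_1,F_2$ of size $n-1$ whose union misses the (non-antipodal) pair $x_{n+1},x_{2n}$, and $F_3,F_4$ obtained from them by a small explicit local perturbation, verified uniformly in $n$.
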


\begin{proof}
   For $n \in \{2,3,4\}$, one can directly check that the graphs $C_{2n}(1,n)$ are well-covered and hence levelable (also 
   see \cite[Theorem 4.3]{BH2011}). So suppose $n\geq 5$. We will show that $G=C_{2n}(1,n)$ is not levelable. We consider two cases.

    Case 1: If $n\geq 5$ is odd, define
    \begin{align*}
        F_1 &= \{ x_1,x_3,\dots,x_{2n-1} \}, \\
        F_2 &= \{ x_2,x_4,\dots,x_{2n} \}, \\
        F_3 &= \{ x_1,x_3,\dots,x_{n-4},x_{n-1}\} \cup \{x_{n+2},x_{n+4} ,\dots, x_{2n-3}\}, \\
        F_4 &= \{x_2,x_4,\dots,x_{n-3},x_{n}\} \cup \{x_{n+3},x_{n+5} ,\dots, x_{2n-2}\}.
    \end{align*}
    We claim that $F_1,\dots,F_4$ are maximal independent sets of $G$ that satisfy the hypotheses of Lemma \ref{lem.noncondition}.

    Since $F_3 \cup F_4 \subsetneq F_1 \cup F_2$ and $F_3 \cap F_4 \neq \emptyset$, it only remains to check that $F_1,\dots,F_4$ are maximal independent sets of $G$. We see that $F_1,\dots, F_4$ are independent sets by construction.
    Since $\{x_1,x_{2n}\}$ and $\{x_i,x_{i+1}\}$ are edges of $G$ for each $1 \leq i \leq 2n-1$, it is clear that $F_1$ and $F_2$ are maximal. Additionally, since $\{x_{n-1},x_{2n-1}\}$ (resp. $\{x_{n},x_{2n}\}$) is an edge of $G$, we conclude that $F_3$ (resp. $F_4$) is maximal. It follows from Lemma \ref{lem.noncondition} that $G$ is not levelable.

    Case 2: If $n \geq 6$ is even, define
    \begin{align*}
        F_1 &= \{ x_1,x_3,\dots,x_{n-1}, x_{n+2}, x_{n+4},\dots, x_{2n-2} \}, \\
        F_2 &= \{ x_2,x_4,\dots,x_{n}, x_{n+3}, x_{n+5},\dots, x_{2n-1}\}, \\
        F_3 &= \{x_1,x_3,\dots,x_{n-5},x_{n-1}, x_{2n-3}\} \cup A, \\
        F_4 &= \{x_2,x_4,\dots,x_{n-4},x_{n},x_{2n-2}\} \cup A',
    \end{align*}
    where $A=A'=\emptyset$ if $n=6$ and \begin{align*}
        A &=\{x_{n+2},x_{n+4},\dots,x_{2n-6}\}, \\
        A' &=\{x_{n+3},x_{n+5},\dots,x_{2n-5}\}
        \end{align*}
    for all $n\geq 8$.
   
    We proceed as above. Since $F_3 \cup F_4 \subsetneq F_1 \cup F_2$ and $F_3 \cap F_4 \neq \emptyset$, it only remains to check that $F_1,\dots,F_4$ are maximal independent sets of $G$. We see that $F_1,\dots, F_4$ are independent sets by construction. Since $\{x_1,x_{2n}\}$ and $\{x_i,x_{i+1}\}$ are edges of $G$ for each $1 \leq i \leq 2n-1$, it is clear that $F_1$ and $F_2$ are maximal. Additionally, since $\{x_{n-3},x_{2n-3}\}$ and $\{x_{n-1},x_{2n-1}\}$ (resp. $\{x_{n-2},x_{2n-2}\}$ and $\{x_{n},x_{2n}\}$) are edges of $G$, we conclude that $F_3$ (resp. $F_4$) is maximal. It follows from Lemma \ref{lem.noncondition} that $G$ is not levelable.
\end{proof}

\begin{theorem} \label{thm.circulant2}
Let $G=C_{2n}(2,n)$ with $n$ odd. Then $G$ is levelable if and only $n \in \{3,5\}$.
\end{theorem}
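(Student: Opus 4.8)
The plan is to exploit the explicit description of $C_{2n}(2,n)$ from Figure~\ref{cubic-picture}: it consists of two $n$-cycles $x_1x_2\cdots x_nx_1$ and $y_1y_2\cdots y_ny_1$ joined by the perfect matching $\{x_i,y_i\}$. For a maximal independent set $S$, write $S_x=S\cap\{x_1,\dots,x_n\}$ and $S_y=S\cap\{y_1,\dots,y_n\}$; since $x_i$ and $y_i$ are adjacent, $S$ meets each index at most once, so it is convenient to encode $S$ by the cyclic word $s_1\cdots s_n$ with $s_i\in\{x,y,0\}$ recording whether $x_i\in S$, $y_i\in S$, or neither. Independence is equivalent to no two cyclically consecutive letters being equal to $x$ (resp.\ both equal to $y$), and, given independence, maximality is equivalent to every $0$ being flanked by one $x$ and one $y$ (in particular no two $0$'s are adjacent). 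I will use this encoding throughout.

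For the ``if'' direction I will show $C_{2n}(2,n)$ is well-covered, hence levelable, when $n\in\{3,5\}$. First, for every odd $n$, a maximal independent set cannot have $S_x=\emptyset$: each $x_i$ would then have to be dominated through the matching, forcing $y_i\in S$ for all $i$, which is impossible on the $y$-cycle. Next, if $n\ge 5$ and $S_x=\{x_1\}$ (we may rotate), then $x_3,x_4,\dots,x_{n-1}$ are undominated along the $x$-cycle, so $y_3,\dots,y_{n-1}\in S$; but these are $n-3\ge 2$ consecutive vertices of the $y$-cycle, a contradiction. Hence for odd $n\ge 5$ every maximal independent set satisfies $|S_x|\ge 2$ and, symmetrically, $|S_y|\ge 2$. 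Since $\alpha(C_5)=2$, this forces $|S_x|=|S_y|=2$, so $|S|=4$ for all maximal $S$ when $n=5$; and when $n=3$ each $n$-cycle is a triangle, so $|S_x|=|S_y|=1$ and $|S|=2$ always. In both cases $G$ is well-covered.

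For the ``only if'' direction, fix an odd $n\ge 7$; I will produce $F_1,F_2,F_3,F_4\in{\rm MaxInd}(G)$ meeting the hypotheses of Lemma~\ref{lem.noncondition}. Let $F_1$ be the word with a single $0$ at position $n$ and $x,y$ alternating on positions $1,\dots,n-1$ starting with $s_1=x$; this is maximal because $n-1$ is even, so the two letters flanking the $0$ (at positions $1$ and $n-1$) are $x$ and $y$. Let $F_2$ be the image of $F_1$ under the automorphism $x_i\leftrightarrow y_i$. Then $F_1\cap F_2=\emptyset$ and $F_1\cup F_2=\{x_i,y_i:1\le i\le n-1\}$. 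Next let $F_3$ be the word with $0$'s at positions $2$, $4$, $n$, with $s_1=x$, $s_3=y$, and $x,y$ alternating on positions $5,\dots,n-1$ starting with $s_5=x$; the three flanking conditions hold because $s_1=x\ne y=s_3$, $s_3=y\ne x=s_5$, and (since the block on positions $5,\dots,n-1$ has even length $n-5$) $s_{n-1}=y\ne x=s_1$, so $F_3$ is a genuine maximal independent set. Let $F_4$ be the $x_i\leftrightarrow y_i$ image of $F_3$. Then $F_3\cap F_4=\emptyset$, and $F_3\cup F_4=\{x_1,y_1,x_3,y_3\}\cup\bigcup_{i=5}^{n-1}\{x_i,y_i\}$, which is contained in $F_1\cup F_2$ and omits $x_2,y_2,x_4,y_4$, so $F_3\cup F_4\subsetneq F_1\cup F_2$. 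Lemma~\ref{lem.noncondition} then shows $G$ is not levelable.

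The main obstacle is the uniform-in-$n$ bookkeeping of the ``only if'' direction: one must confirm that the words $F_1$ and $F_3$ really satisfy the independence and flanking constraints for every odd $n\ge 7$ (the $0$ at position $n$ in $F_3$ is the delicate one, since its validity rests on the parity of $n-5$), and that $F_3\cup F_4$ lands strictly inside $F_1\cup F_2$. It is worth checking the smallest case $n=7$ directly, where the alternating block on positions $5,\dots,n-1$ has length only $2$, to make sure the general recipe does not degenerate.
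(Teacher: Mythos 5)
Your proof is correct and follows essentially the same route as the paper: well-coveredness (hence levelability) for $n\in\{3,5\}$, and for odd $n\ge 7$ an application of Lemma~\ref{lem.noncondition} to four explicitly constructed maximal independent sets satisfying $F_3\cap F_4=\emptyset$ and $F_3\cup F_4\subsetneq F_1\cup F_2$. Your specific sets (and the word encoding used to verify maximality) differ slightly from the paper's choices, and you spell out the well-covered check that the paper leaves as a direct verification, but the underlying argument is the same.
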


\begin{proof}
For $n \in \{3,5\}$, one can directly check that the graphs $C_{2n}(2,n)$ are well-covered 
(also 
   see \cite[Theorem 4.3]{BH2011}) and hence levelable. So suppose $n\geq 7$ is odd. Define
    \begin{align*}
        F_1 &= \{x_1,x_3\dots,x_{n-2}\}  \cup \{y_2,y_4\dots, y_{n-1} \},  \\
        F_2 &= \{x_2,x_4\dots,x_{n-1}\}  \cup \{y_3,y_5\dots, y_{n} \}, \\
        F_3 &= \{ x_1,x_3,\dots,x_{n-6}, x_{n-3} \} \cup \{ y_2,y_4,\dots,y_{n-5}, y_{n-1} \}, \\
        F_4 &= \{x_2,x_4,\dots,x_{n-5}, x_{n-2} \} \cup \{ y_3,y_5,\dots,y_{n-4}, y_{n} \}.
    \end{align*}
    Since $F_3 \cup F_4 \subsetneq F_1 \cup F_2$ and $F_3 \cap F_4 \neq \emptyset$, it only remains to check that $F_1,\dots,F_4$ are maximal independent sets of $G$. We see that $F_1,\dots, F_4$ are independent sets by construction. Since $\{x_1,x_{n}\}$, $\{y_1,y_{n}\}$, $\{x_i,x_{i+1}\}$, and $\{y_i,y_{i+1}\}$  are edges of $G$ for each $1 \leq i \leq n-1$, it is clear that $F_1$ and $F_2$ are maximal. Additionally, since $\{x_{n-1},y_{n-1}\}$ and $\{x_{n-3},y_{n-3}\}$ (resp. $\{x_{n},y_{n}\}$ and $\{x_{n-2},y_{n-2}\}$) are edges of $G$, we conclude that $F_3$ (resp. $F_4$) is maximal. It follows from Lemma \ref{lem.noncondition} that $G$ is not levelable.
\end{proof}


\section{Levelable graphs and a connection to level artinian rings}
This last section describes the origins of levelable graphs and
their connection to level artinian rings.  Level rings, which 
were introduced by Stanley \cite{Stan77} almost 50 years ago,
have inspired a significant amount of research in 
combinatorial commutative algebra.
As we shall show, our new connection allows us to use
the properties of levelable graphs to deduce consequences
for the existence of level  artinian  rings of a certain form.

Let $A$ be a graded $\mathbb{K}$-algebra, i.e., 
$A = \mathbb{K}[x_1,\ldots,x_n]/I$ for some homogeneous ideal
$I$. We will define 
level rings only in the case that $A$ is artinian -- 
the  general
definition of level rings is in terms of the canonical module of $A$.
The ring $A$ is {\it artinian} if the Krull dimension of $A$ is $0$.
Being artinian is equivalent to the fact that there exists positive
integers $a_1,\ldots,a_n$ such that
$x_i^{a_i} \in I$ for $i=1,\ldots,n$.  
If $A$ is a graded artinian ring, then there is an integer $e$ such that
\[A = A_0 \oplus A_1 \oplus A_2 \oplus \cdots \oplus A_e ~~\mbox{with
$A_e \neq 0$}\]
where each $A_i$ is a $\mathbb{K}$-vector space of homogeneous elements of 
$A$ of degree $i$.
Note that $A_i = 0$ for all $i > e$.   
The {\it socle} of $A$ is the homogeneous ideal of $A$ defined by
\[{\rm  soc}(A) = \{a \in A ~|~ a\overline{x}_i = 0 ~~\mbox{for $i=1,\ldots,n$}\},\]
i.e., ${\rm soc}(A)$ is 
the annihilator of the maximal ideal
$\langle \overline{x}_1,\ldots,\overline{x}_n \rangle$ in $A$.
The {\it socle-vector} is $s(A) = (s_0,s_1,\ldots,s_e)$ where
$s_i = \dim_{\mathbb{K}} {\rm soc}(A)_i$.  We 
come to the key definition:
\begin{definition}
A graded artinian algebra $A$ is {\it level} if the socle-vector
of $A$ has the form $s(A) = (0,0,\ldots,0,s_e)$ for some integer $s_e > 0$.  
\end{definition}

In 2010, Van Tuyl and Zanello \cite{VTZ2010} introduced {\it levelable simplicial complexes}; these are simplicial complexes
that allow one to build a level graded artinian ring.  We
recall the main ideas of Stanley--Resiner theory -- see \cite{HHgtm260} for more details.
A simplicial complex on a set $V = \{x_1,\ldots,x_n\}$ is 
a subset of the power set of $V$ with the property that
if $F \in \Delta$ and $G \subseteq F$, then $G \in \Delta$.  
The elements of $\Delta$ are called {\it faces}, and the
maximal faces of $\Delta$ under inclusion are the {\it facets}.
If $F_1,\ldots,F_s$ is a complete list of the facets of
$\Delta$, then we write $\Delta = \langle F_1,\ldots,F_s \rangle$.
The Stanley--Reisner correspondence allows one to associate
a simplicial complex $\Delta$ on $V$ with a squarefree
monomial ideal in $R =\mathbb{K}[x_1,\ldots,x_n]$ with $\mathbb{K}$
as field.  Precisely, 
\[I_\Delta = \langle x_{i_1}\cdots x_{i_s} ~:~ \{x_{i_1},\ldots,
x_{i_s}\} \not\in \Delta \rangle.\]   
The ideal $I_\Delta$ is the {\it Stanley--Reisner ideal}
of $\Delta$, and $R/I_\Delta$ is the {\it Stanley--Reisner ring}.

Given any simplicial complex and integers $a_1,\ldots,a_n \geq 2$,
set 
\[A(\Delta,(a_1,\ldots,a_n)) = \frac{\mathbb{K}[x_1,\ldots,x_n]}{I_\Delta + \langle
x_1^{a_1},\ldots,x_n^{a_n}\rangle}.\]
The ring
$A(\Delta,(a_1,\ldots,a_n))$ is an example of a 
graded artinian ring. We omit the case that
$a_i=1$ for some $i$ because in this situation
\[A(\Delta,(a_1,\ldots,a_n)) \cong A(\Delta',(a_1,\ldots,\hat{a}_i,
\ldots,a_n))\]
where $\Delta'$ is the induced simplicial complex on
$V \setminus \{x_i\}$.

In general, whether or not $A(\Delta,(a_1,\ldots,a_n))$ is
a level ring will depend upon both $\Delta$ and $(a_1,\ldots,a_n)$.
Van Tuyl and Zanello called $\Delta$ a {\it levelable simplicial
complex} if there exists  $(a_1,\ldots,a_n) \in \mathbb{N}^n$
with all $a_i \geq 2$ such that $A(\Delta,(a_1,\ldots,a_n))$
is a level ring.  As shown by Van Tuyl and Zanello,
determining if $\Delta$ is levelable reduces to solving
a particular system of equations constructed from the
facets of $\Delta$.

\begin{theorem}[{\cite[Theorem 6]{VTZ2010}}]\label{levelableclassification}
Let $\Delta$ be
a simplicial complex on $V = \{x_1,\ldots,x_n\}$ with facets $\{F_1,\ldots,F_s\}$.
Let $F_i = \{x_{i,1},\ldots,x_{i,d_i}\}$ denote the $i$-th facet.
Then $A(\Delta,(a_1,\ldots,a_n))$
is level if and only if $(a_1,\ldots,a_n)$ is an integral solution 
with $a_i \geq 2$ to the 
system
\begin{eqnarray*}
    (x_{1,1}+ x_{1,2} + \cdots + x_{1,d_1}) -
    (x_{2,1} + x_{2,2} + \cdots + x_{2,d_2}) &=& d_1 -d_2 \\
    (x_{2,1}+ x_{2,2} + \cdots + x_{2,d_2}) -
    (x_{3,1} + x_{3,2} + \cdots + x_{3,d_3}) &=& d_2 -d_3 \\
   & \vdots & \\
    (x_{s-1,1}+ x_{s-1,2} + \cdots + x_{s-1,d_{s-1}}) -
    (x_{s,1} + x_{s,2} + \cdots + x_{s,d_s}) &=& d_{s-1} -d_s. 
\end{eqnarray*}
\end{theorem}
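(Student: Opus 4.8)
The plan is to compute the socle of $A:=A(\Delta,(a_1,\ldots,a_n))$ explicitly. Since $A$ is the quotient of $\mathbb{K}[x_1,\ldots,x_n]$ by a monomial ideal, it carries a fine $\mathbb{Z}^n$-grading in which every multigraded component is at most one-dimensional, and this is the structural feature that makes everything tractable.

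First I would record the standard monomial basis of $A$: a monomial $x_1^{b_1}\cdots x_n^{b_n}$ is nonzero in $A$ if and only if $0\le b_i\le a_i-1$ for all $i$ (so that it survives $\langle x_1^{a_1},\ldots,x_n^{a_n}\rangle$) and $\operatorname{supp}(\mathbf b):=\{i:b_i>0\}$ is a face of $\Delta$ (so that it survives $I_\Delta$); these monomials form a $\mathbb{K}$-basis of $A$, and multiplication by any $x_i$ carries a basis monomial either to another basis monomial or to $0$. Consequently $\operatorname{soc}(A)$, being a $\mathbb{Z}^n$-graded submodule, is spanned by the basis monomials it contains.

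Next I would determine exactly which basis monomials lie in $\operatorname{soc}(A)$. For a basis monomial $x^{\mathbf b}$ with $F=\operatorname{supp}(\mathbf b)\in\Delta$, the product $x_i\cdot x^{\mathbf b}$ vanishes in $A$ precisely when either $i\in F$ and $b_i=a_i-1$, or $i\notin F$ and $F\cup\{i\}\notin\Delta$; here the hypothesis $a_i\ge 2$ enters, since for $i\notin F$ the exponent of $x_i$ in $x_i\cdot x^{\mathbf b}$ is $1\le a_i-1$, so only $I_\Delta$ can annihilate it. Imposing this for every $i$ forces $b_i=a_i-1$ for all $i\in F$ and forces $F$ to be maximal in $\Delta$, i.e., a facet. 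Hence $\operatorname{soc}(A)$ has exactly one basis element per facet $F_j=\{x_{j,1},\ldots,x_{j,d_j}\}$, namely $\sigma_j=\prod_{k=1}^{d_j} x_{j,k}^{\,a_{j,k}-1}$ (writing $a_{j,k}$ for the exponent bound attached to the vertex $x_{j,k}$), of degree $\deg\sigma_j=\bigl(\sum_{k=1}^{d_j}a_{j,k}\bigr)-d_j$.

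Finally I would assemble the conclusion. Since $A=A_0\oplus\cdots\oplus A_e$ with $A_e\neq 0$ and $A_e\subseteq\operatorname{soc}(A)$ automatically (because $A_e\cdot x_i\subseteq A_{e+1}=0$), the socle-vector has the level shape $(0,\ldots,0,s_e)$ with $s_e>0$ if and only if all of $\sigma_1,\ldots,\sigma_s$ have the same degree. By the degree formula, $\deg\sigma_i=\deg\sigma_{i+1}$ is equivalent to $\bigl(\sum_k a_{i,k}\bigr)-\bigl(\sum_k a_{i+1,k}\bigr)=d_i-d_{i+1}$, which is precisely the $i$-th equation of the displayed system (with $x_{i,k}$ read as the variable taking the value $a_{i,k}$); conversely the consecutive equations chain together to give equality of all $\deg\sigma_j$. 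The side constraint $a_i\ge 2$ is simply the standing restriction under which $A(\Delta,(a_1,\ldots,a_n))$ was defined. The only genuinely delicate points are the multigrading argument that $\operatorname{soc}(A)$ is spanned by monomials and the clean identification of "maximal face of $\Delta$" with "facet," together with keeping careful track of where $a_i\ge 2$ is used; the remainder is bookkeeping.
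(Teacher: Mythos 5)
Your proof is correct, but there is nothing in this paper to compare it against line by line: the statement is imported verbatim from Van Tuyl--Zanello \cite[Theorem 6]{VTZ2010} and no proof is reproduced here, the cited source deriving it within the framework of Macaulay's inverse systems (as its title suggests). Your route is a direct, self-contained computation of the socle of the monomial quotient $A=A(\Delta,(a_1,\ldots,a_n))$, and every step checks out: the fine $\ZZ^n$-grading with one-dimensional graded pieces does force $\operatorname{soc}(A)$ to be spanned by the monomials it contains; the criterion ``$x_i\cdot x^{\mathbf b}=0$ iff $i\in F$ with $b_i=a_i-1$, or $i\notin F$ with $F\cup\{i\}\notin\Delta$'' is exactly right, and you correctly flag that $a_i\geq 2$ is what prevents the pure-power generators from interfering when $i\notin F$; hence the socle has precisely one monomial generator $\sigma_j=\prod_k x_{j,k}^{a_{j,k}-1}$ per facet $F_j$, of degree $\sum_k a_{j,k}-d_j$, and since $A_e\subseteq\operatorname{soc}(A)$ automatically, levelness is equivalent to all these degrees being equal, which is precisely the displayed chain of equations with $x_{i,k}$ evaluated at $a_{i,k}$. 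Compared with invoking the inverse-systems machinery of \cite{VTZ2010}, your argument is more elementary and makes the combinatorial meaning transparent (socle generators $\leftrightarrow$ facets, socle degrees $\leftrightarrow$ weighted facet sums), which is in fact the mechanism this paper exploits when translating the theorem into the definition of a levelable graph; what the citation buys the paper is brevity and placement of the result in the broader inverse-systems context, not any additional generality. The only cosmetic caveat is the degenerate case $\Delta=\{\emptyset\}$, which your computation handles anyway (the socle is spanned by $1$ and the empty system is vacuously satisfied).
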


Levelable graphs, as we have defined them in this paper,
arise when considering when the independence complex of a graph is 
a levelable simplicial complex.  The {\it independence complex}
of a graph $G$ is the simplicial complex
\[
{\rm Ind}(G) = \{W \subseteq V ~:~ \mbox{$W$ is an  independent set of $V$}\},
\]
that is, the faces of ${\rm Ind}(G)$ are 
the independent sets of $G$. 
The facets of ${\rm Ind}(G)$ is the set ${\rm MaxInd}(G)$ that we 
defined in the introduction.  The connection between levelable
graphs and level graded artinian rings is then caputerd in the following
result.

\begin{theorem}
    A graph $G$ is a levelable
    graph if and only if ${\rm Ind}(G)$ is a levelable
    simplicial complex.
\end{theorem}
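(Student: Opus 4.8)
The plan is to unwind both definitions and match them up via Theorem~\ref{levelableclassification}. On the graph side, $G$ is levelable if there is a weight function $(c_1,\ldots,c_n)\in\mathbb{N}^n_{>0}$ and a constant $c$ with $\sum_{x_i\in W}c_i=c$ for every $W\in{\rm MaxInd}(G)$. On the simplicial-complex side, $\Delta={\rm Ind}(G)$ is levelable if there exist integers $a_1,\ldots,a_n\geq 2$ with $A(\Delta,(a_1,\ldots,a_n))$ level, which by Theorem~\ref{levelableclassification} happens exactly when $(a_1,\ldots,a_n)$ solves the linear system saying that $\sum_{x_i\in F_j}a_i$ is independent of $j$ as $F_j$ ranges over the facets of $\Delta$. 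The crucial observation is that the facets of ${\rm Ind}(G)$ are precisely the elements of ${\rm MaxInd}(G)$, so the condition ``$\sum_{x_i\in F_j}a_i$ is constant over all facets $F_j$'' is literally the condition ``$\sum_{x_i\in W}a_i$ is constant over all $W\in{\rm MaxInd}(G)$'', i.e. that $(a_1,\ldots,a_n)$ is a weighted well-covered weighting of $G$. So the two notions differ only in the constraint $c_i>0$ (integer, allowing $1$) versus $a_i\geq 2$.

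First I would prove the forward direction. Suppose $G$ is levelable with respect to $(c_1,\ldots,c_n)\in\mathbb{N}^n_{>0}$. Set $a_i=c_i+1$ for each $i$; then each $a_i\geq 2$, and since $\sum_{x_i\in W}c_i$ is constant over ${\rm MaxInd}(G)$ and each facet of ${\rm Ind}(G)$ is a maximal independent set, we also get that $\sum_{x_i\in W}a_i=\sum_{x_i\in W}c_i+|W|$ — wait, this is \emph{not} automatically constant unless $G$ is well-covered, so a naive $+1$ shift fails. Instead I would use a scaling trick: since $G$ is levelable with respect to $(c_1,\ldots,c_n)$, it is also levelable with respect to $(2c_1,\ldots,2c_n)$ (the constant simply doubles), and here every entry $2c_i\geq 2$. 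Thus taking $a_i=2c_i\geq 2$ gives integers $\geq 2$ making $\sum_{x_i\in W}a_i$ constant over all facets of ${\rm Ind}(G)$, so by Theorem~\ref{levelableclassification} the ring $A({\rm Ind}(G),(a_1,\ldots,a_n))$ is level, i.e. ${\rm Ind}(G)$ is a levelable simplicial complex.

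For the converse, suppose ${\rm Ind}(G)$ is a levelable simplicial complex, so there are integers $a_1,\ldots,a_n\geq 2$ with $A({\rm Ind}(G),(a_1,\ldots,a_n))$ level. By Theorem~\ref{levelableclassification}, $(a_1,\ldots,a_n)$ solves the system forcing $\sum_{x_i\in F_j}a_i$ to be the same for all facets $F_j$ of ${\rm Ind}(G)$; since those facets are exactly the maximal independent sets of $G$, the vector $(a_1,\ldots,a_n)$ is a weight function with positive integer values (indeed all $a_i\geq 2>0$) making $G$ weighted well-covered. Hence $G$ is a levelable graph by Definition~\ref{defn.levelable}. I would close by noting the minor bookkeeping point that Theorem~\ref{levelableclassification} is phrased for an arbitrary facet list $F_1,\ldots,F_s$ and a chain of successive differences, which is equivalent to ``all facet-sums equal''; and that the empty complex / isolated-vertex edge cases are harmless. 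The only real subtlety — and the step I would be most careful about — is exactly this $a_i\geq 2$ versus $c_i\geq 1$ mismatch in the two definitions, which the $(c_1,\ldots,c_n)\mapsto(2c_1,\ldots,2c_n)$ rescaling resolves cleanly; everything else is a direct translation.
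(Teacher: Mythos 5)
Your overall plan --- translate both notions through Theorem~\ref{levelableclassification} using the fact that the facets of ${\rm Ind}(G)$ are exactly the elements of ${\rm MaxInd}(G)$ --- is the same as the paper's, but your reading of Theorem~\ref{levelableclassification} is wrong, and this breaks both directions. The linear system in that theorem does \emph{not} say that $\sum_{x_i\in F_j}a_i$ is independent of $j$: the right-hand sides are $d_j-d_{j+1}$, not $0$, so the condition is that $\sum_{x_i\in F_j}a_i-d_j$, equivalently $\sum_{x_i\in F_j}(a_i-1)$, is constant over the facets. Consequently the ``naive $+1$ shift'' you discard is precisely the correct move (and is what the paper does): if $\sum_{x_i\in W}c_i=c$ for all $W\in{\rm MaxInd}(G)$, then $a_i=c_i+1$ gives $\sum_{x_i\in W}a_i=c+|W|$, whose successive differences are exactly $d_j-d_{j+1}$, and all $a_i\ge 2$. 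Your replacement $a_i=2c_i$ makes the facet sums constant, which satisfies the system only when all facets have the same cardinality, i.e.\ only when $G$ is well-covered. Concretely, for $P_3$ with weight $(1,2,1)$ your recipe gives $(2,4,2)$: the facets $\{x_1,x_3\}$ and $\{x_2\}$ both have sum $4$, but the system demands a difference of $d_1-d_2=1$, so $A({\rm Ind}(P_3),(2,4,2))$ is not level.

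The converse has the mirror-image error: from a solution $(a_1,\ldots,a_n)$ with $a_i\ge 2$ you conclude that $(a_1,\ldots,a_n)$ itself is a weight function making $G$ weighted well-covered, but the system only guarantees that $\sum_{x_i\in W}(a_i-1)$ is constant, so the weight function you want is $(a_1-1,\ldots,a_n-1)$, whose entries are $\ge 1$ and hence admissible (again, this is what the paper does). For $P_3$, the vector $(2,3,2)$ solves the system, yet $2+2\neq 3$, so it is not itself a well-covered weighting. So the gap is not the $a_i\ge 2$ versus $c_i\ge 1$ bookkeeping you worried about --- that part is handled exactly by the shift by $1$ in each direction --- but the misidentification of which linear condition Theorem~\ref{levelableclassification} imposes.
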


\begin{proof}
Let ${\rm MaxInd}(G) = \{W_1,\ldots,W_s\}$ be the maximal
independent sets of $G$, and let $W_i = \{x_{i,1},\ldots,
x_{i,d_i}\}$ for $i=1,\ldots,s$.  Note that
$W_1,\ldots,W_s$ are also the facets of ${\rm Ind}(G)$.

{\bf (``Only If'')}  Suppose $G$ is a levelable graph.  So there
is a weight function $(c_1,\ldots,c_n)$ with positive integers 
and independence weight $c$ such that
\[
    c_{i,1} + \cdots + c_{i,d_i}  =  c  
    ~\mbox{for $i=1,\ldots,s$}. 
\]
If we now consider the vector $(c_1+1,\ldots,c_n+1)$, each
$W_i$ gives the equality:
\[(c_{i,1}+1)+\cdots +(c_{i,d_i}+1) = c+d_i.\]
But then for  $i=1,\ldots,s-1$, we have
\[((c_{i,1}+1)+\cdots+(c_{i,d_i}+1)) - 
((c_{i+1,1}+1)+\ldots + (c_{i,d_{i+1}}+1)) =
d_i-d_{i_1}.\]
By Theorem \ref{levelableclassification}, this
means that ${\rm Ind}(G)$ is a levelable simplicial 
complexes since $(c_1+1,\ldots,c_n+1)$ is  solution
to the system of equations with $c_i+1 \geq 2$ for all $i$.

{\bf (``If'')}  Let $(a_1,\ldots,a_n)$ with all $a_i \geq 2$
be an integral solution to the equations of
Theorem \ref{levelableclassification} constructed
from the facets of ${\rm Ind}(G)$.  By reversing
the above argument, we can show $G$ is levelable
with respect to the weight function $(a_1-1,\ldots,a_n-1)$.
\end{proof}

In light of the previous result,
finding levelable graphs immediately 
allows us to construct a level graded artinian  ring.  The 
{\it edge ideal} of $G$ is the squarefree monomial ideal
$I(G) = \langle x_ix_j : \{x_i,x_j\} \in E(G) \rangle$.  It is 
well-known (e.g., see \cite{HHgtm260}) that $I(G)$ is the 
Stanley--Reisner ideal of the simplicial complex ${\rm Ind}(G)$, that is, $I(G) = I_{{\rm Ind}(G)}$.
By simply translating definitions and using the previous theorem,
we deduce Theorem \ref{maintheorem}:

\begin{theorem}\label{thm.constructartinina}
    Let $G = (V,E)$ be a graph with edge ideal $I(G)$ in 
    $R = \mathbb{K}[x_1,\ldots,x_n]$.  Then
    $G$ is a levelable graph with respect to the
    weight function $(c_1,\ldots,c_n)$ if and only if
    \[R/(I(G)+\langle x_1^{c_1+1},\ldots,x_n^{c_n+1}\rangle)\]
    is a level graded artinian ring.
\end{theorem}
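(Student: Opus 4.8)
The plan is to chain together three ingredients that are already in place: the Stanley--Reisner identification $I(G) = I_{\mathrm{Ind}(G)}$, the characterization of levelable graphs via the Van Tuyl--Zanello system established in the preceding theorem, and Theorem \ref{levelableclassification} itself, which says exactly when $A(\Delta,(a_1,\dots,a_n))$ is level. The argument is essentially a translation of definitions.

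First I would set up the dictionary. The facets of $\mathrm{Ind}(G)$ are precisely the elements of $\mathrm{MaxInd}(G)$, and $I(G) = I_{\mathrm{Ind}(G)}$. Hence for any integers $a_1,\dots,a_n \ge 2$,
\[
A(\mathrm{Ind}(G),(a_1,\dots,a_n)) = \frac{\mathbb{K}[x_1,\dots,x_n]}{I_{\mathrm{Ind}(G)} + \langle x_1^{a_1},\dots,x_n^{a_n}\rangle} = R/\bigl(I(G) + \langle x_1^{a_1},\dots,x_n^{a_n}\rangle\bigr).
\]
Taking $a_i = c_i + 1$, the ring in the statement is exactly $A(\mathrm{Ind}(G),(c_1+1,\dots,c_n+1))$, and the requirement that $(c_1,\dots,c_n)$ be a positive-integer weight function, i.e. $c_i \ge 1$, is equivalent to $c_i + 1 \ge 2$, which is precisely the side condition appearing in Theorem \ref{levelableclassification}.

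Next I would invoke the preceding theorem together with its proof: $G$ is levelable with respect to $(c_1,\dots,c_n)$ if and only if $(c_1+1,\dots,c_n+1)$ is an integral solution, with all entries $\ge 2$, to the linear system built from the facets $W_1,\dots,W_s$ of $\mathrm{Ind}(G)$ in Theorem \ref{levelableclassification}. By that same theorem of Van Tuyl and Zanello, this holds if and only if $A(\mathrm{Ind}(G),(c_1+1,\dots,c_n+1))$ is a level graded artinian ring. Combining this with the identification of rings above yields the claim. (If one prefers not to route through the preceding theorem, one can argue directly: given a positive-integer weight function with constant maximal-independent-set sum $c$, the vector $(c_1+1,\dots,c_n+1)$ satisfies $\sum_{x_j \in W_i}(c_j+1) = c + |W_i|$, so the consecutive differences equal $|W_i| - |W_{i+1}|$ — exactly the right-hand sides of the VTZ system — and the converse reverses this.)

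Since every step is a translation of definitions, there is no genuine obstacle. The only point that needs a little care is the shift by one: matching ``positive integer weights'' ($c_i \ge 1$) with ``artinian exponents at least $2$'' ($c_i + 1 \ge 2$), and observing that the single constant independence weight $c$ corresponds to the non-constant right-hand sides $|W_i| - |W_{i+1}|$ in the VTZ system precisely because adding $1$ to every coordinate inflates $\sum_{x_j \in W_i} 1$ by $|W_i|$, which varies with $i$.
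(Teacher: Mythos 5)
Your proposal is correct and follows essentially the same route as the paper: identify the quotient ring with $A(\mathrm{Ind}(G),(c_1+1,\ldots,c_n+1))$ via $I(G)=I_{\mathrm{Ind}(G)}$, apply Theorem \ref{levelableclassification}, and do the shift-by-one bookkeeping showing the constant independence weight corresponds to the right-hand sides $d_i-d_{i+1}$ of the Van Tuyl--Zanello system. This is exactly the argument the paper packages into its preceding theorem on levelable simplicial complexes and then cites as ``translating definitions.''
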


As the previous result shows, constructing level graded artinian rings from
an edge ideal of a graph $G$ reduces to verifying that $G$ is a levelable graph.
We conclude this paper by using 
a result of Brown and Nowakowski \cite{BN2007}
to show that level graded artinian 
rings of this form are quite rare.

Recall from the introduction that we can associate to any
$G$ a vector space
\footnotesize
\[{\rm WCW}(G) = \{ {\bf c} = 
(c_1,\ldots,c_n) : \mbox{$G$ with weight function {\bf c} is a weighted well-covered graph}\} \subseteq F^{|V|}.\]
\normalsize
for a fixed field $F$.  
Note that to construct ${\rm WCW}(G)$ 
we must first fix a field.  If the field has characterisic zero,
then $F$ contains a copy of $\mathbb{N}$. We then have
the following fact:

\begin{lemma}\label{lem.wcwdim}
    Fix a field $F$ of characteristic zero. Let $G$ be a graph
    and view ${\rm WCW}(G)$ as a vector space over $F$. 
    If $\dim {\rm WCW}(G) = 0$, then $G$ is not levelable.
\end{lemma}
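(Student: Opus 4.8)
The plan is to argue by contraposition: I will show that if $G$ is levelable, then $\dim_F {\rm WCW}(G) \geq 1$, and hence in particular $\dim_F {\rm WCW}(G) \neq 0$.

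First I would recall how levelability supplies a distinguished point of $F^{|V|}$. Suppose $G$ is levelable with respect to a weight function $(c_1,\ldots,c_n) \in \mathbb{N}^n_{>0}$. By Definition \ref{defn.levelable} the sums $\sum_{x_i \in W} c_i$ coincide for all $W \in {\rm MaxInd}(G)$; equivalently, the vector ${\bf c} = (c_1,\ldots,c_n)$ is an integer solution of the homogeneous linear system
\[
\sum_{x_i \in W} z_i - \sum_{x_i \in W'} z_i = 0 \qquad \text{for all } W, W' \in {\rm MaxInd}(G),
\]
which is exactly the system of linear conditions whose $F$-solution space is ${\rm WCW}(G)$.

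Next I would transport ${\bf c}$ into $F$. Since $F$ has characteristic zero, the canonical ring homomorphism $\mathbb{Z} \to F$ is injective, so each $c_i$ becomes a nonzero element of $F$ and ${\bf c}$ becomes a nonzero vector in $F^{|V|}$. The equations defining ${\rm WCW}(G)$ have coefficients in $\{-1,0,1\} \subseteq \mathbb{Z}$, so a solution over $\mathbb{Z}$ is again a solution over $F$; therefore ${\bf c} \in {\rm WCW}(G)$. As ${\bf c}$ is not the zero vector, $\dim_F {\rm WCW}(G) \geq 1$, which is precisely the contrapositive of the statement.

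There is no substantive obstacle here. The only point worth flagging is that the argument relies on two facts about ${\rm WCW}(G)$: that it is the solution space of a linear system with integer coefficients, so that a $\mathbb{Z}$-solution remains a solution over $F$; and that the characteristic-zero hypothesis makes the embedding $\mathbb{Z} \hookrightarrow F$ available, so that the strictly positive entries $c_i$ of the levelable weighting are genuinely nonzero elements of $F$. Together these are exactly what is needed to conclude that ${\bf c}$ is a nonzero element of the vector space ${\rm WCW}(G)$.
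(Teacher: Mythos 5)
Your proposal is correct and follows essentially the same route as the paper: the positive integer weight function of a levelable graph, viewed inside $F$ via the characteristic-zero embedding $\mathbb{N}\subseteq F$, is a nonzero element of ${\rm WCW}(G)$, forcing $\dim {\rm WCW}(G)\geq 1$. The paper states this briefly as a proof by contradiction, while you phrase it as a contrapositive and spell out why an integer solution of the defining linear system remains a solution over $F$; this is the same argument with the routine details made explicit.
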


\begin{proof}
    If $G$ was levelable, then there is a weight vector $(c_1,\ldots,c_n)$
    with $c_i \geq 0$ and $c_i \in \mathbb{N} \subseteq F$.  But then
    $(c_1,\ldots,c_n) \in {\rm WCW}(G)$ is a non-zero vector,
    and thus $0 = \dim {\rm WCW}(G) \geq 1$, a contradiction.
\end{proof}
\begin{example}
    The converse of the previous statement is false.  As
    we have seen, $P_5$ is not a levelable graph. However
    $(1,1,0,-1,-1) \in {\rm WCW}(P_5)$, so $\dim {\rm WCW}(P_5) \neq 0$.
\end{example}

We recall some of the terminology related to random graphs.  A {\it random graph
$G$} of order $n$ is a graph on $V = \{x_1,\ldots,x_n\}$ where there is
an edge between $x_i$ and $x_j$ of fixed probability $p \in (0,1)$ for
all possible $1 \leq i  < j \leq n$.  We say that {\it almost all graphs $G$} 
have property $P$ if ${\rm Prob}(\mbox{$G$ has property $P$}) \rightarrow 1$
as $|V| \rightarrow \infty$.

\begin{theorem}[{\cite[Theorem 1; Corollary 2]{BN2007}}]\label{thm.almostall}  
Fix a field $F$. For almost all graphs $G$, $\dim {\rm WCW}(G)) = 0$.
In particular, for almost all graphs $G$, $G$ is not well-covered.
\end{theorem}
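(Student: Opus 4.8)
The plan is to restate $\dim{\rm WCW}(G)=0$ as a spanning condition on the incidence vectors of maximal independent sets and then verify it for the random graph $G=G(n,p)$ with $p\in(0,1)$ fixed. For $W\in{\rm MaxInd}(G)$ let $v_W\in\{0,1\}^n\subseteq F^n$ be its incidence vector and put $U(G)=\operatorname{span}_F\{\,v_W-v_{W'}:W,W'\in{\rm MaxInd}(G)\,\}$. A weight vector $(c_1,\dots,c_n)$ lies in ${\rm WCW}(G)$ precisely when it is orthogonal to $U(G)$, so $\dim{\rm WCW}(G)=n-\dim U(G)$ and $\dim{\rm WCW}(G)=0$ is equivalent to $U(G)=F^n$. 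Since $G$ well-covered forces $\mathbf 1\in{\rm WCW}(G)$ and hence $\dim{\rm WCW}(G)\ge 1$, the first assertion implies the ``in particular.'' I would prove $U(G)=F^n$ for almost all $G$ by showing separately that $U(G)\supseteq\mathbf 1^{\perp}$ and $U(G)\nsubseteq\mathbf 1^{\perp}$; as $\mathbf 1^{\perp}$ is a hyperplane, these combine to $U(G)=F^n$.

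\emph{That $U(G)\nsubseteq\mathbf 1^{\perp}$} is exactly the classical statement that almost all $G$ are not well-covered, which I would obtain from standard random-graph estimates: a greedily constructed maximal independent set of $G(n,p)$ has size $(1+o(1))\log_{1/(1-p)}n$ with high probability, whereas $\alpha(G)=(2+o(1))\log_{1/(1-p)}n$ with high probability; in fact, with high probability the sizes of maximal independent sets of $G$ form an interval of length tending to infinity, by first and second moment estimates on the number of maximal independent sets of each size. Choosing $W,W'$ with $|W|-|W'|$ nonzero in $F$ — for positive characteristic, two of these sizes that are incongruent modulo $\operatorname{char}F$ — gives a vector $v_W-v_{W'}\in U(G)\setminus\mathbf 1^{\perp}$.

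\emph{For $U(G)\supseteq\mathbf 1^{\perp}$} I would use a transposition gadget on edges. Call an edge $\{x_i,x_j\}$ of $G$ \emph{good} if there is an independent set $J$ of the induced subgraph on $D=V\setminus(N[x_i]\cup N[x_j])$ that is maximal there and dominates $D'=(N(x_i)\triangle N(x_j))\setminus\{x_i,x_j\}$. A short verification shows that then $J\cup\{x_i\}$ and $J\cup\{x_j\}$ are both maximal independent sets of $G$, so $e_i-e_j\in U(G)$. If the good edges form a connected spanning subgraph of $G$, the vectors $e_i-e_j$ over the good edges span $\mathbf 1^{\perp}$, which is what is wanted; so the remaining task is to show that, with high probability, the good-edge subgraph is connected and spanning.

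\emph{The hard part} is this last step. A computation shows that a fixed edge is good with probability bounded below by a positive constant, of order $e^{-2p(1-p)}$: greedily extending to a maximal independent set $J$ of $G[D]$ dominates all but a Poisson-like number of vertices of $D'$, because $|D'|=\Theta(n)$ while $(1-p)^{|J|}=\Theta(1/n)$. This estimate sits exactly on the $\Theta(1/n)$ threshold, so one cannot make a single gadget work and must exploit that each vertex lies on $\Theta(n)$ candidate edges; converting the per-edge bound into connectivity then requires a union bound over cuts $(A,V\setminus A)$ of the probability that no crossing edge is good, the delicate point being that the events ``$\{x_i,x_j\}$ is good'' are not independent, so one must reveal the edges in a suitable order (those at a vertex first, then the rest) to obtain enough independence to close the bound. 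As a consistency check, a twin of $x_i$ would make every edge at $x_i$ bad and force $\dim{\rm WCW}(G)\ge 1$, but almost all $G$ have no twins, so no local configuration obstructs the argument. (Brown and Nowakowski's original proof may organize the global step differently — for instance via a structural reduction deleting ``reducible'' vertices one at a time — but the crux, controlling how a local gadget extends to a maximal independent set of all of $G$, is the same.)
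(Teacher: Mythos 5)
A preliminary remark: the paper does not prove this statement at all --- Theorem \ref{thm.almostall} is quoted from Brown and Nowakowski \cite{BN2007} (their Theorem 1 and Corollary 2), so there is no in-paper argument to compare yours against; you are in effect attempting to reprove their theorem from scratch.

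On the merits, your deterministic skeleton is correct: ${\rm WCW}(G)$ is the orthogonal complement of $U(G)=\operatorname{span}\{v_W-v_{W'}\}$, so $\dim {\rm WCW}(G)=0$ iff $U(G)=F^{n}$; the verification that a ``good'' edge $\{x_i,x_j\}$ produces the two maximal independent sets $J\cup\{x_i\}$ and $J\cup\{x_j\}$, hence $e_i-e_j\in U(G)$, is sound; and if the good edges form a connected spanning subgraph then these differences span $\mathbf 1^{\perp}$, so one extra vector with nonzero coordinate sum finishes the job. The problem is that the two probabilistic claims carrying all of the content are not proved, and you acknowledge as much. (i) The connectivity of the good-edge subgraph is left open: your stated per-edge bound is only a constant ($\asymp e^{-2p(1-p)}$), and even upgrading it by taking $J$ a near-maximum independent set of $G[D]$ (giving failure probability roughly $(\log n)^{2}/n$ per edge) still allows on the order of $n\,(\log n)^{2}$ bad edges among the $\Theta(n^{2})$ edges of $G(n,p)$, so connectivity genuinely requires handling the dependence among the events ``$\{x_i,x_j\}$ is good'' for edges sharing a vertex; the proposal only gestures at an edge-exposure scheme and a union bound over cuts without carrying out any estimate. (ii) For fields of positive characteristic you need two maximal independent sets whose cardinalities are incongruent modulo the characteristic of $F$; you invoke the assertion that with high probability the achieved sizes form a long interval ``by first and second moment estimates,'' but no such estimates are given (two consecutive achieved sizes would suffice, yet even that is unproved --- the easy greedy-versus-$\alpha(G)$ comparison only yields two sizes whose difference is about $\log_{1/(1-p)}n$, which could a priori be divisible by the characteristic). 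As it stands, this is a plausible proof plan for the Brown--Nowakowski theorem, not a proof.
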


We can now deduce the following results that are of interest to
those working in combinatorial commutative algebra.

\begin{corollary}\label{cor.neverlevel}
    For almost all graphs $G$, the graded artinian ring
    \[R/(I(G)+\langle x_1^{a_1},\ldots,x_n^{a_n}\rangle)\]
    is not level for any choice of $(a_1,\ldots,a_n) \in \mathbb{N}^n$
    with $a_i \geq 2$ for all $i$.
\end{corollary}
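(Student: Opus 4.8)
The plan is to obtain the statement as a contrapositive, chaining the three results already established: Theorem~\ref{thm.almostall} (Brown--Nowakowski), Lemma~\ref{lem.wcwdim}, and Theorem~\ref{thm.constructartinina}.

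First I would fix the base field $\mathbb{K}$ to have characteristic zero (say $\mathbb{K} = \mathbb{Q}$), and likewise take $F$ of characteristic zero when forming ${\rm WCW}(G)$, so that Lemma~\ref{lem.wcwdim} applies. By Theorem~\ref{thm.almostall}, almost all graphs $G$ have $\dim {\rm WCW}(G) = 0$, and for every such $G$, Lemma~\ref{lem.wcwdim} tells us $G$ is not a levelable graph. So it suffices to prove the deterministic implication: if $G$ is not levelable, then $R/(I(G)+\langle x_1^{a_1},\ldots,x_n^{a_n}\rangle)$ fails to be level for every $(a_1,\ldots,a_n) \in \mathbb{N}^n$ with all $a_i \geq 2$.

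For this implication I would argue by contradiction. Suppose such an exponent vector $(a_1,\ldots,a_n)$, with each $a_i \geq 2$, makes the ring level. Put $c_i := a_i - 1$; since $a_i \geq 2$ we have $c_i \geq 1$, so $(c_1,\ldots,c_n) \in \mathbb{N}^n_{>0}$, and the hypothesis says exactly that $R/(I(G)+\langle x_1^{c_1+1},\ldots,x_n^{c_n+1}\rangle)$ is a level graded artinian ring. By Theorem~\ref{thm.constructartinina}, this forces $G$ to be levelable with respect to $(c_1,\ldots,c_n)$ --- contradicting the assumption that $G$ is not levelable. Hence no admissible exponent vector yields a level ring, which is the assertion for almost all $G$.

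I do not expect a genuine obstacle here: the mathematical content lives entirely in the cited theorems. The only points to watch are bookkeeping ones --- the degree shift between a vertex weight $c_i$ and the exponent $c_i+1 = a_i$ appearing in the artinian reduction (absorbed by the substitution $c_i = a_i-1$, legitimate precisely because $a_i \geq 2$), and the requirement $\operatorname{char} F = 0$ needed to invoke Lemma~\ref{lem.wcwdim}. With those in place the chain ``$\dim {\rm WCW}(G) = 0 \Rightarrow G$ not levelable $\Rightarrow$ no level ring of this form'' closes the proof.
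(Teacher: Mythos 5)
Your proposal is correct and follows exactly the paper's own argument: Theorem~\ref{thm.almostall} gives $\dim{\rm WCW}(G)=0$ for almost all $G$, Lemma~\ref{lem.wcwdim} then rules out levelability, and Theorem~\ref{thm.constructartinina} translates this into the failure of levelness for every admissible exponent vector. Your extra bookkeeping (the substitution $c_i=a_i-1$ and fixing characteristic zero so the lemma applies) just makes explicit what the paper leaves implicit.
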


\begin{proof}
    By Theorem \ref{thm.almostall}, almost all graphs $G$
    have $\dim {\rm WCW}(G) = 0$, which by Lemma \ref{lem.wcwdim} 
    implies that almost all graphs are not levelable, and thus
    by Theorem \ref{thm.constructartinina}, for almost all graphs,
    the ring in the statement of the corollary cannot be level.
\end{proof}

One class of rings that are of great importance in commutative
algebra are Cohen--Macaulay rings.  We say a
ring $S$ is {\it Cohen--Macaulay} if
$\dim S = {\rm depth} S$.  A graph $G$ is a {\it Cohen--Macaulay
graph} if the ring $R/I(G)$ is a Cohen--Macaulay ring (for more details,
see \cite{HHgtm260}).
For $G$ to be Cohen--Macaulay, a necessary property is that $G$ is well-covered.
Consequently, we can also deduce the following 
result from Theorem \ref{thm.almostall}:

\begin{corollary}\label{cor.nevercm}
    For almost all graphs $G$, $G$ is not Cohen--Macaulay.
\end{corollary}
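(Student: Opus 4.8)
The plan is to deduce this directly from the Brown--Nowakowski result (Theorem \ref{thm.almostall}) together with the well-known necessary condition for Cohen--Macaulayness recalled just above the statement, namely that if $R/I(G)$ is Cohen--Macaulay then $G$ is well-covered. This implication is standard: a Cohen--Macaulay ring is unmixed, so all associated primes of $I(G)$ have the same height; since the minimal primes of $I(G)$ correspond to the minimal vertex covers of $G$, this forces all minimal vertex covers --- equivalently, all maximal independent sets --- to have the same size; see \cite{HHgtm260}.

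First I would take the contrapositive: if $G$ is not well-covered, then $R/I(G)$ is not Cohen--Macaulay. Consequently, in the probability space of random graphs of order $n$, the event ``$G$ is not Cohen--Macaulay'' contains the event ``$G$ is not well-covered''.

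Next I would apply Theorem \ref{thm.almostall}, which gives $\mathrm{Prob}(G \text{ is not well-covered}) \to 1$ as $n \to \infty$. By monotonicity of probability and the containment of events above, $\mathrm{Prob}(G \text{ is not Cohen--Macaulay}) \geq \mathrm{Prob}(G \text{ is not well-covered})$, and since the right-hand side tends to $1$ and probabilities never exceed $1$, the left-hand side also tends to $1$. This is exactly the assertion that almost all graphs $G$ are not Cohen--Macaulay.

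There is no real obstacle here; all the work is contained in Theorem \ref{thm.almostall}, and the corollary follows in one line once the implication ``Cohen--Macaulay $\Rightarrow$ well-covered'' is invoked. The only point requiring a modicum of care is the bookkeeping that ``not well-covered'' is a sub-event of ``not Cohen--Macaulay'', which is immediate from the contrapositive.
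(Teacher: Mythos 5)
Your argument is correct and is essentially the paper's: the corollary follows at once from the implication ``Cohen--Macaulay $\Rightarrow$ well-covered'' (which the paper simply recalls, and which you rightly justify via unmixedness) combined with Theorem \ref{thm.almostall}. The contrapositive/event-containment bookkeeping you spell out is exactly the intended one-line deduction.
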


\begin{remark}
    The previous result was also proved by
    Docthermann--Newman \cite[Corollary 1.4]{DN2023} and 
    Erman--Yang \cite[Corollary 7.1]{EY2018}, using approaches different than Brown--Nowakowski.
\end{remark}

\subsection*{Acknowledgments}

Some of our results first appeared in the senior undergraduate thesis
of M. Chong under the supervision of A. Van Tuyl. 
We thank David Tankus for his comments, and
in particular, for his improvement to Corollary \ref{cor:AAA} and
bringing \cite{LT2015} to our attention.
Part of this work was carried out while T. Hibi visited
the other authors at McMaster University, and when T. Hibi and A. Van Tuyl
visited the Fields Institute in Toronto as part of the ``Thematic Program
in Commutative Algebra and Applications''.  We thank the hospitality of 
both institutions.
A. Van Tuyl’s research is supported by NSERC Discovery Grant 2024-05299.

\end{document}